\definecolor{RedOrange}{cmyk}{ 0, 0.77, 0.87, 0}
\definecolor{RoyalPurple}{cmyk}{ 0.84, 0.53, 0, 0}
\definecolor{YellowGreen}{cmyk}{ 0.44, 0, 0.74, 0}
\definecolor{Fuchsia}{cmyk}{ 0.47, 0.91, 0, 0.08}
\definecolor{Blue}{cmyk}{ 0.84, 0.53, 0, 0}
\definecolor{BlueViolet}{cmyk}{ 0.84, 0.53, 0, 0}
\definecolor{Black}{cmyk}{ 0.75, 0.68, 0.67, 0.9}
\newcommand{\rmb}{{\rm B}^M_{i,z}}
\newcommand{\rma}{{\rm A}}
\newcommand{\rmax}{{\rm A}_M^{\gamma_{0,x}}}
\newcommand{\rmt}{{\rm T}}
\newcommand{\rmf}{{\rm F}}
\newcommand{\var}{{\rm Var}}
\newcommand{\w}{\omega}
\newcommand{\R}{\mathbb{R}}
\renewcommand{\P}{\mathbb{P}}
\renewcommand{\O}{\mathbb{O}}
\newcommand{\N}{\mathbb{N}}
\newcommand{\e}{\varepsilon}
\newcommand{\E}{\mathbb{E}}
\newcommand{\Z}{\mathbb{Z}}
\newcommand{\I}{\mathbb{I}}
\newcommand{\pp}{\mathbb{P}}
\newcommand{\kA}{\mathcal{A}}
\newcommand{\kB}{\mathcal{B}}
\newcommand{\kO}{\mathcal{O}}
\newcommand{\kP}{\mathcal{P}}
\newcommand{\kF}{\mathcal{F}}
\newcommand{\kG}{\mathcal{G}}
\newcommand{\kE}{\mathcal{E}}
\newcommand{\lin}{\left[\kern-0.15em\left[}
\newcommand{\rin} {\right]\kern-0.15em\right]}
\newcommand{\linf}{[\kern-0.15em [}
\newcommand{\rinf} {]\kern-0.15em ]}
\newcommand{\ilin}{\left]\kern-0.15em\left]}
\newcommand{\irin} {\right[\kern-0.15em\right[}
\newtheorem{lem}{Lemma}[section]
\newtheorem{prop}[lem]{Proposition}
\newtheorem{theo}[lem]{Theorem}
\newtheorem{cor}[lem]{Corollary}
\newtheorem*{ack}{Acknowledgments}
\definecolor{lilas}{RGB}{182, 102, 210}
\numberwithin{equation}{section}
\begin{document}

\title{First passage time of the frog model has  a sublinear variance}

\author{Van Hao Can}\address{Van Hao Can, Research Institute for Mathematical Sciences, Kyoto University, 606--8502 Kyoto, Japan 
	\&	Institute of Mathematics, Vietnam Academy of Science and Technology, 18 Hoang Quoc Viet, 10307 Hanoi, Vietnam}
\author{Shuta Nakajima}\address{Shuta Nakajima, Research Institute for Mathematical Sciences, Kyoto University, 606--8502 Kyoto, Japan }

	\maketitle
	
	\begin{abstract}
		In this paper, we show that the first passage time in the frog model on $\Z^d$ with $d\geq 2$ has a sublinear variance. This implies that the central limit theorem does not holds at least with the standard diffusive scaling. The proof is based on the method introduced in \cite{BRo, DHS} combining with a control of the maximal weight of paths in locally dependent site-percolation.  We also apply this method to get the linearity of the lengths of optimal paths.
	\end{abstract}
\section{Introduction}
Frog models are simple but well-known models in the study of the spread of infection. In these models, individuals (also called frogs) move on the integer lattice $\Z^d$, which have one of two states infected (active) and healthy (passive). We assume that at the beginning, there is only one infected frog at the origin, and there are healthy frogs at other sites of $\Z^d$. When a healthy frog encounters with an infected one, it becomes infected forever. While the healthy frogs do not move, the frogs  perform independent simple random walks once they get infected. We are interested in the long time behavior of the infected individuals.\\

To the best of our knowledge, the first result on frog models is due to Tecls and Wormald \cite{TW}, where they proved the recurrence of the model (more precisely, they showed that the origin is visited infinitely often a.s. by infected frogs). Since then, there are numerous results on the behavior of the model under various settings of initial configurations, mechanism of walks, or underlying graphs, see \cite{AMP,BDHJ,BR,DP,GS,HJJ,HJJ1,KZ}. In particular, Popov and some authors study the phase transition of the recurrence versus transience for  the model with Bernoulli initial configurations and for the model with drift, see \cite{AMP1,DGHPW,GS,P}.
%Kosygina and Zerner show in \cite{KZ} a zero-one law for the recurrence vs transience for the model on general graphs.
Another interesting feature in the frog model is that it  can be described in the first passage percolation contexts, which is explained below. In fact, Alves, Machado and Popov used this property to prove a shape theorem \cite {AMP}. Moreover, the large deviation estimate for the first passage time is derived in \cite{CKN, K} recently.\\  

The frog model can be defined formally as follows. Let $d\geq 2$ and  $\{({\rm S}_j^x)_{j \in \N}, x \in \Z^d\}$ be independent SRWs such that ${\rm S}_0^x=x$ for any $x \in \Z^d$. For $x,y \in \Z^d$, let 
$$t(x,y) = \inf \{j\in \N_{\geq 0}: {\rm S}_j^x=y\}.$$
The first passage time from $x$ to $y$ is defined by 
$${\rm T}(x,y)= \inf \Big \{\sum_{i=1}^k t(x_{i-1},x_i): x=x_0, \ldots, x_k=y \textrm{ for some } k \Big \}.$$
The quantity ${\rm T}(x,y)$ can be seen as the first time when the frog at $y$ becomes infected assuming that the frog at $x$ was the only infected one at the beginning. For the simplicity of notation, we write ${\rm T}(x)$ instead of ${\rm T}(0,x)$. A path $\gamma=(x_i)^{\ell}_{i=0}$ with $x_0=x$ and $x_{\ell}=y$ is said to be optimal if ${\rm T}(x,y)=\sum_{i=1}^{\ell}t(x_{i-1},x_i)$. For any $x,y\in\Z^d$, such a path certainly exists since ${\rm T}(x,y)$ is a finite natural number almost surely by Lemma \ref{l1}.\\

It has been shown in \cite{AMP} that the first passage time is subadditive, i.e. for any $x,y,z \in \Z^d$
\begin{equation}
  {\rm T}(x,z) \leq {\rm T}(x,y)+{\rm T}(y,z).\label{subadditive}
\end{equation}
The authors of \cite{AMP} also show that the sequence $\{{\rm T}((k-1)z,kz)\}_{k\geq 1}$ is stationary and ergodic for any $z\in \Z^d$. As a consequence of Kingman's subadditive ergodic theorem (see \cite{Ki} or \cite[Theorem 3.1]{AMP}), one has 
\begin{eqnarray} \label{1}
\lim\limits_{n \rightarrow \infty} \frac{{\rm T}(nz)}{n} \rightarrow \kappa_z \quad a.s.,
\end{eqnarray}
 with
 $$\kappa_z = \inf_{n\in \N_{\geq 1}} \frac{\E({\rm T}(nz))}{n}.$$
Furthermore, a shape theorem for the set of active frogs  has been also proved, see \cite[Theorem 1.1]{AMP}.   The convergence \eqref{1}, which can be seen as a law of large numbers, implies that for any $x \in \Z^d$ the first passage time ${\rm T}(x)$ grows linearly in $|x|_1$. A natural question is whether the standard central limit theorem holds for ${\rm T}(x)$. The first task is to understand the behavior of variance of ${\rm T}(x)$.  In \cite{K}, the author proves  some large deviation estimates for ${\rm T}(x)$, see in particular Lemma \ref{l2} below.  As a consequence, one can show that ${\rm Var}({\rm T}(x))=\kO(|x|_1(1+\log |x|_1)^{2A})$, for some constant $A$, see Corollary \ref{cor1}. However, this result is not enough to answer the question on the standard central limit theorem. \\

Our main result is to show that the first passage time has sublinear variance and thus the  central limit theorem with the standard diffusive scaling\footnote{Indeed, it follows from Theorem \ref{mt} and Chebyshev's inequality that $\pp\left(\tfrac{T(x)-\E(T(x))}{\sqrt{\E(T(x))}} \geq t \right)  \leq \frac{C}{t^2 \log |x|_1} \rightarrow 0$ as $|x|_1 \rightarrow \infty$. That rules out the possibility of holding the standard central limit theorem.} is not true. 

\begin{theo} \label{mt}
Let $d\geq 2$. Then there exists a positive constant $C=C(d)$ such that for any $x \in \Z^d$,
$${\rm Var}({\rm T}(x)) \leq \frac{C |x|_1}{\log |x|_1}.$$
\end{theo}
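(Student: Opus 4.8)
It suffices to treat $n:=|x|_1$ large, the remaining values of $x$ being covered by Corollary \ref{cor1}. The plan is to follow the entropy method of \cite{BRo,DHS}. Encode the randomness as the i.i.d.\ family $\omega=({\rm S}^v)_{v\in\Z^d}$ (equivalently, the i.i.d.\ increments of the walks), and for a vertex $v$ let $\omega^{(v)}$ be $\omega$ with ${\rm S}^v$ replaced by an independent copy. Since the influence of the walks near $0$ and near $x$ is of order $1$, first pass to an averaged observable: fix $\ell=\lfloor n^{1/2-\delta}\rfloor$ for a small $\delta>0$, let $B$ be the box of side $\ell$ at the origin, and put $F:=|B|^{-1}\sum_{w\in B}{\rm T}(w,w+x)$. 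Using subadditivity \eqref{subadditive}, stationarity of the model and Corollary \ref{cor1}, one has $|{\rm T}(0,x)-{\rm T}(w,w+x)|\le {\rm T}(0,w)+{\rm T}(w,0)+{\rm T}(x,w+x)+{\rm T}(w+x,x)$ for $w\in B$, whence ${\rm Var}({\rm T}(x))\le 2\,{\rm Var}(F)+C\ell^{2}\le 2\,{\rm Var}(F)+Cn/\log n$; so it is enough to bound ${\rm Var}(F)$. Enumerate the relevant vertices (those within distance $Cn$ of $[0,x]$, the rest being removed by a truncation using Lemma \ref{l2}) as $v_1,v_2,\dots$, set ${\mathcal F}_k=\sigma({\rm S}^{v_1},\dots,{\rm S}^{v_k})$, and form the martingale differences $V_k=\E[F\mid{\mathcal F}_k]-\E[F\mid{\mathcal F}_{k-1}]=\E[F-F^{(v_k)}\mid{\mathcal F}_k]$, so that ${\rm Var}(F)=\sum_k\E[V_k^2]$. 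The engine is the Falik--Samorodnitsky inequality
$$\sum_k{\rm Ent}(V_k^2)\ \ge\ {\rm Var}(F)\,\log\!\Big(\frac{{\rm Var}(F)}{\sum_k(\E|V_k|)^2}\Big),$$
which produces a logarithmic gain as soon as $\sum_k(\E|V_k|)^2$ is polynomially smaller than ${\rm Var}(F)$. The proof thus reduces to
$$(\mathrm{I})\quad \sum_k{\rm Ent}(V_k^2)\le Cn,\qquad\qquad (\mathrm{II})\quad \sum_k(\E|V_k|)^2\le n^{\,1-c}\ \ \text{for some }c=c(d)>0.$$

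For $(\mathrm{I})$: tensorizing entropy over the i.i.d.\ coordinates and invoking a (modified) logarithmic Sobolev inequality for the law of one walk — which, after truncating each walk at a polynomial time via Lemma \ref{l2}, reduces to the $O(1)$ log-Sobolev constant of the uniform law on the $2d$ neighbours — yields, as in \cite{BRo}, $\sum_k{\rm Ent}(V_k^2)\le C\sum_v\E[(F-F^{(v)})^2]$, so $(\mathrm I)$ follows from $\sum_v\E[(F-F^{(v)})^2]\le Cn$. For $(\mathrm{II})$: since $\E|V_k|\le\E|F-F^{(v_k)}|$, it suffices to show $\sum_v\E|F-F^{(v)}|\le Cn$ together with $\max_v\E|F-F^{(v)}|\le Cn^{-c}$. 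The last bound is where the averaging pays off: a single walk ${\rm S}^v$ affects $F$ only through those translates ${\rm T}(w,w+x)$, $w\in B$, whose optimal path visits $v$, and the expected number of such $w\in B$ is $o(|B|\,n^{-c})$ — a count obtained as in \cite{DHS} from subadditivity and the a priori control on the transversal spread of optimal paths that comes from Corollary \ref{cor1} and Lemma \ref{l2}.

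Everything therefore reduces to controlling $F-F^{(v)}$, i.e.\ the effect of resampling the single walk ${\rm S}^v$. Resampling ${\rm S}^v$ changes ${\rm T}(w,w+x)$ only when the optimal path $\pi$ from $w$ to $w+x$ visits $v$, and then at most by the cost of re-routing around $v$: deleting $v$ from $\pi$ and reconnecting its two neighbours by a short path avoiding $v$ (or, if $v=w$, absorbing the first step of the resampled walk). Typically this detour costs $O(1)$ and has finite moments of every order by Lemma \ref{l2}; the task is to make this uniform and usable inside the sums $(\mathrm I)$, $(\mathrm{II})$, and in particular to decouple it sufficiently from the event $\{v\in\pi\}$. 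This is the point where one renormalizes: tile $\Z^d$ by boxes of a suitably growing side, call a box good when the walks started in it and in its neighbours regularly visit the adjacent boxes — so that on a good region a single vertex can always be circumvented at controlled cost — and note, via Lemma \ref{l2}, that good boxes percolate with density close to $1$ and with finite range of dependence. The residual detour cost around $v$ is then bounded by the total passage time accumulated while crossing bad boxes along a short detour, i.e.\ by the maximal weight of a short path in a locally dependent site-percolation, which has stretched-exponential tails by a greedy-lattice-animal / Cox--Gandolfi--Griffin--Kesten type estimate. Combining this with the bound $\E\,|\{v:v\in\pi\}|\le\E[{\rm T}(w,w+x)]=O(n)$ — valid because an optimal path may be taken self-avoiding with every step of cost at least $1$ — gives $\sum_v\E[(F-F^{(v)})^2]\le Cn$ and $\sum_v\E|F-F^{(v)}|\le Cn$, as required.

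It remains to combine. If ${\rm Var}(F)\le n/\log n$ there is nothing more to prove. Otherwise ${\rm Var}(F)/\sum_k(\E|V_k|)^2\ge (n/\log n)\,n^{-(1-c)}\ge n^{c/2}$ for $n$ large, and the Falik--Samorodnitsky inequality together with $(\mathrm I)$ gives ${\rm Var}(F)\le Cn/\big((c/2)\log n\big)$; with the reduction of the first paragraph this proves the theorem. The main obstacle is the uniform detour estimate of the previous paragraph. Unlike in first-passage percolation, the frog model has passage times that can be infinite — simple random walk is transient for $d\ge 3$ and may never reach a prescribed neighbour — so circumventing a vertex is not a local operation and genuinely forces one into the renormalized picture; one must choose ``good box'' strong enough that detours on a good region are cheap yet weak enough that the good phase remains dense and finitely dependent, and then establish the maximal-weight bound for the resulting locally dependent percolation uniformly in $v$ and, after decoupling, essentially independently of whether $v$ lies on an optimal path. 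By contrast, the transversal-spread input needed in $(\mathrm{II})$ is routine given Corollary \ref{cor1}, Lemma \ref{l2} and \eqref{subadditive}.
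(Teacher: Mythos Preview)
Your overall strategy matches the paper's: pass to a spatial average $F$, apply Falik--Samorodnitsky to the martingale increments, and feed in an entropy bound (I) and an $L^1$ bound (II). The good-box renormalization you describe for controlling detour costs is in the same spirit as the paper's use of locally dependent site-percolation (their Lemma~\ref{wdsp}), though the paper works directly with the modified passage times ${\rm T}_1,{\rm T}_2$ and their stretched-exponential tails (Lemma~\ref{lemttt}) rather than defining good boxes.

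There is, however, a genuine gap in your treatment of (I). You write that tensorizing entropy and invoking the $O(1)$ log-Sobolev constant of the uniform law on the $2d$ neighbours yields $\sum_k {\rm Ent}(V_k^2) \leq C\sum_v \E[(F-F^{(v)})^2]$. This is not what tensorization gives. Tensorizing over the i.i.d.\ \emph{increments} $\omega_{i,j}$ of the walks, as you yourself propose, produces
\[
\sum_k {\rm Ent}(V_k^2)\ \leq\ C\sum_{i}\sum_{j}\E\big[(\Delta_{i,j}F)^2\big],
\]
where $\Delta_{i,j}F$ is the effect of resampling the single step $\omega_{i,j}$ (this is exactly the paper's Lemma~\ref{del}). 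The right-hand side is \emph{not} dominated by $\sum_v\E[(F-F^{(v)})^2]$; by Efron--Stein the inequality runs the other way. Concretely, for a fixed vertex $v=x_i$ on the optimal path, every step $j\leq t(x_i,\bar x_i)$ contributes to the inner sum, so summing over $j$ produces an extra factor of order ${\rm T}(x_i,\bar x_i)$ beyond what the single-walk resampling $(F-F^{(v)})^2$ sees. In the paper this leads (see \eqref{s26}) to the quantities $\sum_{y\in\gamma_{0,x}}{\rm T}(y,\bar y)^2$ and $\sum_{y\in\gamma_{0,x}}\max_{|u-y|_1\leq 2}{\rm T}_2(u,y)^4$, each of which is then shown to be $O(|x|_1)$ by separate arguments (Lemmas~\ref{t2l} and~\ref{mtl}) that invoke the locally dependent percolation estimate. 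Your bound $\sum_v\E[(F-F^{(v)})^2]\leq Cn$, even if established, does not close this gap. Put differently: truncating each walk at polynomial time does not give an $O(1)$ log-Sobolev constant for the ``resample the whole walk'' chain --- that constant grows like the logarithm of the state space, hence linearly in the truncation length.

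A smaller point on (II): your justification of $\max_v\E|F-F^{(v)}|\leq Cn^{-c}$ via ``transversal spread of optimal paths'' is vaguer than necessary. The paper uses no transversal-fluctuation input; after translation invariance the sum over $w\in B$ becomes a sum over the portion of a \emph{single} optimal path $\gamma_{0,x}$ lying in a box of side $\sim m$, and the only fact needed is that this portion has total $\ell_1$-length $O(m)$ with high probability --- because the passage time between its first and last points in the box already dominates that length (the events $\kE_{k,L}$ in \eqref{s15}--\eqref{s17}). Combined with the maximal-path estimate (Lemma~\ref{emx}) this gives $\E|\Delta_k|=O(m^{1-d})$ directly.
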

The frog model on $\Z$ (i.e., $d=1$ in our setting) has been carefully investigated by many authors, see e.g., \cite{BR, BRo, GS}. In particular, Commets, Quastel and Ram\'irez \cite{CQR} proved the standard Gaussian fluctuation for the first passage time $\rmt(x)$. As a consequence, ${\rm Var}({\rm T}(x)) \asymp |x|_1$ and the standard central limit theorem for $\rmt(x)$ holds.  We also notice that not only the fluctuation but also the large deviation behavior of $\rmt(x)$ in one dimension is different from that in higher dimensions. Indeed, in the forthcoming paper \cite{CKN}, we and Kubota prove that $\varphi(x)=-\log \pp({\rm T}(x) \geq (1+\varepsilon)\E({\rm T}(x))$ behaves differently when the dimension increases. More precisely, we show that if $d=1$ then $\varphi(x)$ is of order  $\sqrt{|x|_1}$, if $d=2$ then $\varphi(x)$ is of order  $|x|_1/\log |x|_1$ and if $d\geq 3$ then $\varphi(x)$ is of order  $|x|_1$ as $|x|_1\rightarrow \infty$. \\

The sublinearity of variance as in Theorem~\ref{mt}, which is also called the superconcentration,  was first discovered in the  first passage percolation with Bernoulli edge weights by Benjamini, Kalai and Schramm \cite{BKS}. Hence, this result is sometimes called BKS-inequality. Chatterjee  \cite{C14} found the connection among properties of superconcentration, chaos and multiple valleys in, for example, the gaussian polymer model and Sherrington-Kirkpatrick model (see Chapter~5 and 10 in \cite{C14}). This relation is expected to hold in general models. Therefore, the superconcentration is not only an interesting result itself but also an important property to study the structure of optimal paths and the energy landscape.

The method in \cite{BKS} has been improved by Bena\"im and Rossignol in \cite{BRo} to show  the sublinearity of the variance of  ${\rm T}(x)$ in the first passage percolation with a wide class of edges weight distributions, which they called "nearly gamma". Finally, Damron, Hanson and Sosoe in \cite{DHS} generalized the result to all edges  weight distributions with $2+\log$ finite moment.  In this paper, we closely follows the method given in \cite{BKS,BRo,DHS}. However, there are some other difficulties to prove the sublinear variance in the frog models, which will be explained in a sketch of proof below.

\subsection{Sketch of the proof}
First,  we define ${\rm F}_m$ the spatial average of ${\rm T}(x)$  as
$$ {\rm F}_m=\frac{1}{\# {\rm B}(m) } \sum_{z\in {\rm B}(m)} {\rm T}(z,z+x),$$
with  $m=[|x|_1^{1/4}]$ and we prove in Proposition \ref{prop1} that $|\var(\rmt(x))-\var(\rmf_m)|=\kO(|x|_1^{3/4 +\varepsilon})$ for any $\varepsilon >0$. That means we only need to study $\var(\rmf_m)$. As in \cite{BRo, DHS} we consider the martingale decomposition of $\rmf_m$,
\begin{equation*}
{\rm F}_m-\E({\rm F}_m)=\sum_{k=1}^{\infty} \Delta_k,
\end{equation*}
where 
$$\Delta_k=\E({\rm F}_m\mid \kF_k) - \E({\rm F}_m \mid \kF_{k-1}),$$
with $\kF_k$ the sigma-algebra generated by SRWs $\{({\rm S}^{x_i}_j)_{j \in \N}, i =1, \ldots, k\}$ and  $\kF_0$ the trivial sigma-algebra. Note that here we enumerate $\Z^d$ as $\{x_1, x_2, \ldots\}$. As we will see later, with the help of  the weighted logarithmic Sobolev inequality (Lemma \ref{bona}) and the Falik-Smorodnisky inequality (Lemma \ref{l4}), our problem is reduced to prove a series of lemmas \ref{emx}, \ref{t2l} and \ref{mtl}.
%For the proof of these lemmas, we use a control on the maximal weight of paths in locally dependent site-percolation (Lemma \ref{wdsp}).
For illustration, we sketch here the proof of Lemma \ref{emx}, where we show that  as $L\rightarrow \infty$,
\begin{eqnarray} \label{tjol}
\E \left( \max_{\gamma =(y_i)_{i=1}^{\ell} \in \kP_{L}}  \sum \limits_{i=1}^{\ell-1} {\rm T}_1(y_i,y_{i+1})\right) =\kO(L), 
\end{eqnarray}
where $\rmt_1$ is a  modified first passage time, and $\kP_L$ is the set of paths in the box $[-L,L]^d$ with length less than $L$ (see \eqref{def:t1} and section~\ref{sec:notation} for precise definitions). Although the passage times $\{\rmt_1(y_i,y_{i+1})\}_i$ are  concentrated around their means, the correlation among them makes the above problem difficult and interesting. Fortunately, the passage times have the local-dependency property. Indeed, we will show in Lemma \ref{lemttt} that 
\begin{itemize}
	\item [(O1)] for any  $u,v \in \Z^d$,  $M\geq 1$, the event $\{\rmt_1 (u,v) = M \}$ depends only on SRWs $\{(S^x_.) :|x-u|_1 \leq M\}$,
	\item[(O2)] there exist an  integer $C_1\geq 1$ and a constant $\varepsilon_1>0$ such that for any  $u,v \in \Z^d$, $$\pp(\rmt_1(u,v)\geq C_1|u-v|_1) \leq \exp(|u-v|_1^{\varepsilon_1}).$$ 
\end{itemize}
  Starting from these observations, for any path $\gamma=(y_i)_{i=1}^{\ell}$, we consider the following  bound
\begin{equation}
\sum \limits_{i=1}^{\ell-1} {\rm T}_1(y_i,y_{i+1}) \leq  \sum_{M\geq 1} \sum_{k\geq 0} (C_1M+k) a_{M,k}^{\gamma},
\end{equation}
where 
\begin{equation*}
a_{M,0}^{\gamma}=\sum_{y_i \in \gamma} \I(|y_i-y_{i+1}|_1=M, \,\rmt_1(y_i,y_{i+1})\leq C_1M),
\end{equation*}
and for $k\geq 1$,
\begin{equation*}
a_{M,k}^{\gamma}=\sum_{y_i \in \gamma} \I(|y_i-y_{i+1}|_1=M, \,\rmt_1(y_i,y_{i+1})=C_1M+k). 
\end{equation*}
Hence
\begin{equation}
\sum \limits_{i=1}^{\ell-1} {\rm T}_1(y_i,y_{i+1}) \leq C_1|\gamma|_1 +  \sum_{M\geq 1} \sum_{k\geq 1} k a_{M,k}^{\gamma},
\end{equation}
with $|\gamma|_1=\sum_{i=1}^{\ell -1}|y_i-y_{i+1}|$. It is obvious that 
\begin{equation}
a_{M,k}^{\gamma} \leq X_{M,k}(\gamma):= \sum_{y\in \gamma} I_y^{M,k},
\end{equation}
where 
\begin{equation*}
I_y^{M,k}=\I(\exists z : |z-y|_1 \leq M, \rmt_1(y,z)=C_1M+k).
\end{equation*}
Now we arrive at 
\begin{equation} \label{xmk}
\max_{\gamma =(y_i)_{i=1}^{\ell} \in \kP_L}\sum \limits_{i=1}^{\ell-1} {\rm T}_1(y_i,y_{i+1}) \leq C_1L + \max_{\gamma =(y_i)_{i=1}^{\ell} \in \kP_L} \sum_{M\geq 1} \sum_{k\geq 1} kX_{M,k}(\gamma). 
\end{equation}
Here considering the site-percolation on $\Z^d$ generated by  the collection of Bernoulli random variables $\{I_y^{M,k}, y\in \Z^d\}$,   $X_{M,k}(\gamma)$ is the total weight of $\gamma$ on this percolation. Thanks to the observation (O1), the site-percolation  is $(C_1M+k)$-dependent and  by the union bound and (O2),
$$q_{M,k}:=\sup_{y\in \Z^d}\E(I_y^{M,k}) \leq (2(C_1M+k)+1)^d\exp(-(C_1M+k)^{\varepsilon_1}).$$  In the next section,  we prove Lemma \ref{wdsp} to control the maximal weight of paths in locally dependent site-percolation by using a known result for independent site-percolation and tessellation arguments.   In particular, we can show that, with some constant $C>0$,
\begin{align*}
  \E \left(\max_{\gamma \in \kP_L}X_{M,k}(\gamma)\right) &\leq CL (C_1M+k)^d q_{M,k}^{1/d}\\
  &\leq CL (C_1M+k)^{d+1}\exp(-(C_1M+k)^{\varepsilon_1}/d).
\end{align*}
Plugging this estimate into \eqref{xmk}, we get \eqref{tjol}.\\

Our approach seems to be robust and useful for other problems. In particular, using a similar method, we also prove the linearity of the length of optimal paths.

\subsection{The linearity of the lengths of optimal paths}

 Given $x,y\in\Z^d$, let us denote by $\O(x,y)$ the set of all optimal paths from $x$ to $y$.   We simply write $\O(x)$ for $\O(0,x)$. For any path $\gamma=(y_i)_{i=1}^{\ell} \subset \Z^d$, we denote the length of $\gamma$ as $l(\gamma) =\ell$ .  We will prove that the lengths of optimal paths  from $0$ to $x$ grow linearly in $|x|_1$  despite of the fact that optimal paths may have  jumps with size tending to infinity as $|x|_1 \rightarrow \infty$.
\begin{prop} \label{loop} Let $d\geq 2$. Then there exist positive constants $\varepsilon$, $c$ and $C$ such that for any $x \in \Z^d$  
$$	\pp\left( c |x|_1 \leq  \min_{\gamma \in \O(x)} l(\gamma)\leq \max_{\gamma \in \O(x)}  l(\gamma) \leq C|x|_1\right) \geq 1 -e^{-|x|_1^{\varepsilon}}.$$
\end{prop}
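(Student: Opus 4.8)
The plan is to prove the two inequalities separately. The upper bound is essentially immediate, while the lower bound carries the weight of the argument and follows the same circle of ideas as Lemma~\ref{emx}. For the upper bound, note that since each ${\rm S}^u$ is a nearest-neighbour walk, $t(u,v)\geq|u-v|_1$ for all $u,v\in\Z^d$; reading the statement with the (harmless, since $t(u,u)=0$) convention that an optimal path contains no zero-cost step, any $\gamma=(y_i)_{i=0}^{\ell}\in\O(x)$ then satisfies $l(\gamma)=\ell\leq\sum_{i=1}^{\ell}t(y_{i-1},y_i)={\rm T}(x)$. By \eqref{subadditive} one has $\E({\rm T}(x))=\kO(|x|_1)$, so a constant multiple of $|x|_1$ is the right threshold, and Lemma~\ref{l2} gives $\pp({\rm T}(x)\geq C_0|x|_1)\leq e^{-|x|_1^{\varepsilon}}$ for suitable $C_0,\varepsilon>0$; on the complementary event $\max_{\gamma\in\O(x)}l(\gamma)\leq{\rm T}(x)<C_0|x|_1$.

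For the lower bound we work on $\{{\rm T}(x)\leq C_0|x|_1\}$ and fix an optimal path $\gamma=(y_i)_{i=0}^{\ell}$; loop-erasure keeps it optimal and only shortens it, so we may assume $\gamma$ is self-avoiding. On this event $|\gamma|_1:=\sum_i|y_{i-1}-y_i|_1\leq\sum_i t(y_{i-1},y_i)={\rm T}(x)\leq C_0|x|_1$, so $\gamma\subset{\rm B}(C_0|x|_1)$. Put $A:=3C_0$ and call a step $(y_{i-1},y_i)$ \emph{fast} if $t(y_{i-1},y_i)\leq A|y_{i-1}-y_i|_1$. The slow steps obey $\sum_{\rm slow}|y_{i-1}-y_i|_1<{\rm T}(x)/A\leq|x|_1/3$ while $|\gamma|_1\geq|x|_1$, so $\sum_{\rm fast}|y_{i-1}-y_i|_1>|x|_1/2$. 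A fast step of size in $[2^s,2^{s+1})$ leaving a vertex $y$ forces
$$I_y^s:=\I\bigl(\exists z:\ 2^s\leq|z-y|_1<2^{s+1},\ t(y,z)\leq 2A\cdot 2^s\bigr)=1,$$
and since $\gamma$ is self-avoiding, summing over dyadic scales $0\leq s\leq\log_2(C_0|x|_1)$ yields
$$\frac{|x|_1}{2}<\sum_{\rm fast}|y_{i-1}-y_i|_1\leq\sum_{s\geq0}2^{s+1}X_s(\gamma),\qquad X_s(\gamma):=\sum_{y\in\gamma}I_y^s.$$

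It remains to show that, with probability at least $1-e^{-|x|_1^{\varepsilon}}$, the right-hand side is $<|x|_1/2$ for all self-avoiding $\gamma'\subset{\rm B}(C_0|x|_1)$ with $l(\gamma')\leq c|x|_1$, for $c$ small; combined with the displayed bound this forces $l(\gamma)>c|x|_1$ and proves the proposition. For fixed $s$ the family $\{I_y^s\}_{y\in\Z^d}$ depends only on the walks up to time $2A\cdot2^s$, localised within $\ell_1$-distance $2A\cdot2^s$ of $y$; thus it defines a $(2A\cdot2^s)$-dependent site percolation on $\Z^d$ (in fact independent across $y$, as $I_y^s$ involves only ${\rm S}^y$) with marginal, by Azuma's inequality applied coordinatewise,
$$q_s:=\pp(I_y^s=1)\leq\pp\Bigl(\max_{j\leq 2A\cdot2^s}|{\rm S}^y_j-y|_1\geq 2^s\Bigr)\leq 2d\,e^{-c_1\,2^s}$$
for some $c_1=c_1(d,C_0)>0$. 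Applying Lemma~\ref{wdsp} to each layer $s$, using that $\sum_{s\geq0}2^{s+1}(2A\cdot2^s)^d q_s^{1/d}$ is a finite constant, together with the trivial bound $\pp(X_s\neq0)\leq(C_0|x|_1)^d q_s\leq e^{-|x|_1^{\varepsilon}}$ for the $\kO(\log|x|_1)$ scales with $2^s\gtrsim|x|_1^{\varepsilon}$, gives the desired uniform control after choosing $c$ small enough that the resulting bound on $\sum_s 2^{s+1}X_s(\gamma')$ is below $|x|_1/2$.

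The main obstacle is precisely this last step. A crude union bound over the $\sim(C_0|x|_1)^{dc|x|_1}$ short paths in the box controls $X_s$ only at the cost of an extra factor $\log|x|_1$, which degrades the conclusion to $\min_{\gamma\in\O(x)}l(\gamma)\gtrsim|x|_1/\log|x|_1$. Getting the correct linear order requires the sharper, tessellation-based estimate of Lemma~\ref{wdsp} for the maximal weight of a bounded-length path in a (locally dependent) site percolation — the gain being the exponent $1/d$ on $q_s$ — and, crucially, in a form whose leading term is linear in the path-length budget $c|x|_1$ rather than in the ambient box side $C_0|x|_1$. Keeping the four scales in play (path length, box size, dependency range $2A\cdot2^s$, jump scale $2^s$) correctly coupled across the $\kO(\log|x|_1)$ layers is the technical heart of the argument, handled just as in the proof of Lemma~\ref{emx}.
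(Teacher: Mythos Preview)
Your upper bound is fine (though the stretched-exponential tail $\pp({\rm T}(x)\geq C_0|x|_1)\leq e^{-|x|_1^{\varepsilon}}$ comes from Lemma~\ref{l1}, not Lemma~\ref{l2}). The lower bound, however, contains a real gap. Lemma~\ref{wdsp} bounds $\max_{\gamma\in\kP_L}X(\gamma)$ where $\kP_L$ is the class of self-avoiding paths with $|\gamma|_1\leq L$ inside ${\rm B}(L)$; it is linear in that $|\gamma|_1$-budget, \emph{not} in the vertex count $l(\gamma)$. For your test paths the only available $|\gamma|_1$-budget is $C_0|x|_1$ (from $|\gamma|_1\leq{\rm T}(x)$), so Lemma~\ref{wdsp} yields at best $\max X_s(\gamma')\leq C\,C_0|x|_1\, q_s^{1/d}$, and since $q_0=1$ (the random walk is always at some neighbour at time $1$, so $I_y^0\equiv1$), the summed bound $\sum_s 2^{s+1}\max X_s(\gamma')$ is of order $C_0|x|_1$, not $c|x|_1$: choosing $c$ small does nothing. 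If instead you drop the $|\gamma|_1$-constraint and maximise over all $c|x|_1$-element subsets of ${\rm B}(C_0|x|_1)$ (which is what ``self-avoiding $\gamma'$ with $l(\gamma')\leq c|x|_1$'' amounts to, since $X_s$ depends only on the vertex set), the maximum of $X_s$ is simply $\min\bigl(c|x|_1,\#\{y\in{\rm B}(C_0|x|_1):I_y^s=1\}\bigr)$, which equals $c|x|_1$ for every scale with $2^s\lesssim\log|x|_1$; summing gives $\asymp c|x|_1\log|x|_1$, and you recover only $l(\gamma)\gtrsim|x|_1/\log|x|_1$ --- precisely the crude bound you said you wanted to beat.

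The paper avoids this by decomposing not by jump size but by the cost $M={\rm T}(y_i,y_{i+1})=t(y_i,y_{i+1})$ of each step along an optimal path: one writes $|x|_1\leq{\rm T}(x)=\sum_M M\,\#{\rm A}^\gamma_M\leq K\,l(\gamma)+\sum_{M\geq K}M\,\#{\rm A}^\gamma_M$, so that the small costs are absorbed into $K\,l(\gamma)$ and only $M\geq K$ requires a percolation estimate. For those $M$, Lemma~\ref{l3} gives $\pp({\rm T}(y,z)=t(y,z)=M)\leq e^{-M^{\varepsilon}}$ (combining \emph{both} the ${\rm T}$-tail of Lemma~\ref{l1} for short jumps and your random-walk tail for long jumps), so the site-percolation density is already small at the threshold $M=K$. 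Applying Lemma~\ref{wdsp} over $\kP_{C_1|x|_1}$ then gives $\sum_{M\geq K}M\max_{\gamma}\#{\rm A}^\gamma_M\leq|x|_1/2$ with the required probability once $K$ is large: the bound is still linear in $C_1|x|_1$, but now the \emph{coefficient} $\sum_{M\geq K}M^{d+2}e^{-M^{\varepsilon}/d}$ can be made as small as one wishes by enlarging $K$. This thresholding in the decomposition variable --- so that the density at the bottom scale is already $\ll 1$ --- is the missing idea in your argument.
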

%{\bf Sketch of proof}
\subsection{Notation}\label{sec:notation}

\begin{itemize}
\item If $x =(x_1,\ldots, x_d) \in \Z^d$, we denote $|x|_1=|x_1|+\ldots +|x_d|$.
\item For any $n\geq 1$, we denote ${\rm B}(n) = [-n,n]^d$.
\item For any $\ell \geq 1$, we call a sequence of $\ell$ distinct vertices $\gamma=(y_i)_{i=1}^{\ell}$ in $\Z^d$ a path of length $\ell$, we denote $|\gamma|_1=|y_2-y_1|_1+\ldots+|y_{\ell}-y_{\ell -1}|_1$.
\item Given $y=y_i \in \gamma$, we define $\bar{y} =y_{i+1}$ the next point of $y$ in $\gamma$ with the convention that $\bar{y}_{\ell}= y_{\ell}$.
  \item We write $y \sim \bar{y} \in \gamma$ if $\bar{y}$ is the next point of $y$ in $\gamma$.
\item  For $L \geq 1$, we write
$$\kP_L=\{\gamma =(y_i)_{i=1}^{\ell} \subset {\rm B}(L)| ~|\gamma|_1 \leq L,~y_i\neq y_j~if~i\neq j \}.$$
 \item If $f$ and $g$ are two functions, we write $f=\kO(g)$ if there exists a positive constant $C$ such that $f(x) \leq C g(x)$ for any $x$. % Moreover, if the constant $C$ depends on $d$, we write $f=\kO_d(g)$.
 \item We use $C>0$ for a large constant and $\e$ for a small constant. Note that they may change from line to line. 
   \item Given a set $A\subset \Z^d$, we denote by $|A|$ the number of elements of $A$.
\end{itemize}   
\subsection{Organization of this paper}
 The paper is organized as follows. In Section 2, we present some preliminary results including  large deviation estimates on the first passage time and an estimate to control the tail distribution of maximal weight of paths in site-percolation, the introduction and  properties of entropy. In Sections 3 and 4, we prove the main theorem \ref{mt} and Proposition \ref{loop}, respectively.
 \section{Preliminaries}
 \subsection{Large deviation estimates on first passage times} We  present here some useful estimates on the deviation of  first passage times.
\begin{lem} \cite[Proposition 2.4]{K} \label{l1}
	There exists an integer $C_1\geq 1$ and a   positive constant $\varepsilon_1$ such that for any $x, y \in \Z^d$ and  $t \geq C_1|x-y|_1$,
	$$\pp \left({\rm T}(x,y) \geq t  \right) \leq e^{-t^{\varepsilon_1}}.$$
	\end{lem}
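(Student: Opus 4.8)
The plan is a block renormalisation in the spirit of the argument by which \cite{AMP} establish their shape theorem: the statement is essentially a large‑deviation bound asserting that the frog infection spreads at \emph{linear} speed with a quantitative rate, and I would extract it from such a renormalisation. By translation invariance assume $x=0$. A clean starting observation is that a SRW changes its $\ell^1$-distance from its starting point by one at each step, so $t(u,v)\ge|u-v|_1$; hence $\rmt(u,v)\ge|u-v|_1$ deterministically, and — more usefully — on the event $\{\rmt(u,v)\le m\}$ any optimal path $u=x_0,\dots,x_k=v$ satisfies $\sum_i|x_{i-1}-x_i|_1\le m$ and consults each walk it uses for at most $m$ steps, so $\{\rmt(u,v)\le m\}$ is measurable with respect to the finite collection $\{(S^a_j)_{0\le j\le m}:a\in B(u,2m)\}$. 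This is the local dependence that makes a block construction possible.

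Next I would coarse‑grain space. Fix a large integer $L$, tile $\Z^d$ by the cubes $Q_z=Lz+[0,L)^d$, set $\widehat Q_z=\bigcup_{|w-z|_\infty\le1}Q_w$, fix a constant $\kappa_0$ comfortably larger than the limiting speed $\sup_u\kappa_u<\infty$, and call $Q_z$ \emph{good} if $\max_{u,v\in\widehat Q_z}\rmt(u,v)\le\kappa_0L$. By the observation above, $\{Q_z\text{ good}\}$ depends only on finitely many walks started in $B(Lz,CL)$ and run up to time $\kappa_0L$, so $(\mathbf 1\{Q_z\text{ good}\})_z$ is a finite‑range dependent $\{0,1\}$-field; and a $*$-path of good cubes carries the infection at linear speed, since a single active frog in $Q_{z_j}$ infects all of $\widehat Q_{z_j}\supseteq Q_{z_{j+1}}$ within an additional time $\kappa_0L$. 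The crucial — and hardest — point is the single‑scale estimate
\[
\pp\bigl(Q_0\text{ good}\bigr)\longrightarrow1\qquad(L\to\infty)
\]
with a good rate; by \eqref{subadditive} and a union bound over the $\kO(L^{2d})$ pairs in $\widehat Q_0$ this reduces to controlling $\pp(\rmt(0,w)\ge\kappa_0L)$ uniformly over $|w|_1\le 3\sqrt d\,L$, i.e.\ to a quantitative form of the linear growth $\rmt(0,w)\asymp|w|_1$. I would prove it by the multi‑scale bootstrap underlying \cite{AMP}: from a crude deviation bound at one scale one manufactures a sharper one at the next, the gain coming from the fact that once the active region has macroscopic width its boundary advances at positive speed — and the efficiency of this bootstrap is exactly what produces the stretched‑exponential (rather than merely polynomial) rate appearing in the statement.

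Given the good‑cube estimate the remainder is routine. By the Liggett--Schonmann--Stacey domination theorem, for $L$ fixed large the field $(\mathbf 1\{Q_z\text{ good}\})_z$ dominates a supercritical Bernoulli site percolation on $\Z^d$; and supercritical site percolation has linear chemical distance with exponential tails and exponentially small finite clusters. Hence, with probability at least $1-e^{-c|x-y|_1}$, the cube of $x$ and a good cube meeting that of $y$ lie in one cluster and are joined by a $*$-path of good cubes of length $\kO(|x-y|_1/L)$, along which the infection reaches $y$ within time $\kO(|x-y|_1)$; applying the same scheme at spatial scale $\sim t$ (so that, for $t\ge C_1|x-y|_1$, the ball $B(x,t/C_1)$ is covered by the infection within time $t$ off an event of probability $\le e^{-ct/L}$) upgrades this to $\pp(\rmt(x,y)\ge t)\le e^{-t^{\varepsilon_1}}$ after choosing $\varepsilon_1$ small and $C_1$ large. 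The main obstacle throughout is the single‑scale convergence $\pp(Q_0\text{ good})\to1$: it is precisely the claim that the infection crosses a macroscopic box in linear time, the spatial‑localisation and percolation ingredients being standard, and naively chaining the hitting times of individual simple random walks yields only a diffusive — hence useless — bound, so the linear‑speed bootstrap cannot be avoided.
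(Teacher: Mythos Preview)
The paper does not prove this lemma: it is stated with a citation to \cite[Proposition~2.4]{K} (Kubota), and the text immediately after only remarks that the special case $t=C_1|x-y|_1$ goes back to \cite[Lemma~4.2]{AMP}. So there is no ``paper's own proof'' to compare against; your proposal is being measured against a result the authors import wholesale.

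That said, your outline is the right shape and is in the spirit of how such bounds are obtained (and indeed how \cite{AMP,K} proceed): finite-range block variables, Liggett--Schonmann--Stacey domination, and linear chemical distance in supercritical percolation. Two points deserve tightening. First, the passage from the base estimate $\pp(\rmt(x,y)\ge C_1|x-y|_1)\le e^{-|x-y|_1^{\varepsilon}}$ to the full statement for all $t\ge C_1|x-y|_1$ is the step you wave at with ``applying the same scheme at spatial scale $\sim t$''; a clean way to do it is to route through an auxiliary point $z$ with $|x-z|_1,|z-y|_1\asymp t/C_1$ and use subadditivity plus the base estimate at that scale, which already yields the stretched-exponential in $t$. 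Second, and more substantively, you correctly flag that the single-scale input $\pp(Q_0\text{ good})\to1$ with a quantitative (stretched-exponential) rate is the heart of the matter, but you do not actually carry out the bootstrap---you only name it. That is precisely the nontrivial content of \cite[Proposition~2.4]{K}; without it the rest of your scheme is machinery waiting for an engine. As a sketch of strategy your proposal is sound, but as a proof it defers the one genuinely hard step to a reference.
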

We notice that  Lemma \ref{l1} was first proved in \cite[Lemma 4.2]{AMP} for the case $t=C_1|x-y|_1$.  It follows  from Lemma~\ref{l1} that there exists $C>0$ such that for any $x\in\Z^d$,
  \begin{equation} \label{est-expec}
    \begin{split}
      \E {\rm T}(x)\leq C|x|_1.
      \end{split}
  \end{equation}
The following  concentration inequality is derived  in \cite{K}.  
\begin{lem} \cite[Theorem 1.4]{K} \label{l2}
	For any $C >0$, there exist positive constants $a,b$ and $A$ such that for any $x \in \Z^d$ and $(2+\log |x|_1)^{A} \leq t \leq C\sqrt{|x|_1}$,
	$$\pp(|{\rm T}(x)-\E {\rm T}(x)| \geq t\sqrt{|x|_1}) \leq e^{-bt^a}.$$	
\end{lem}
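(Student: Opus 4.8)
The plan is to adapt Kesten's martingale argument for the speed of convergence in first-passage percolation, the two new features being that the randomness consists of the family of simple random walks $\{(S^x_j)_{j\ge0}:x\in\Z^d\}$ rather than i.i.d.\ edge weights, and that a single walk can a priori influence $\rmt(x)$ by an unbounded amount. Fix an enumeration $\Z^d=\{x_1,x_2,\dots\}$, set $\kF_k=\sigma(S^{x_1},\dots,S^{x_k})$ with $\kF_0$ trivial, and write
\begin{equation*}
\rmt(x)-\E\rmt(x)=\sum_{k\ge1}\Delta_k,\qquad \Delta_k=\E(\rmt(x)\mid\kF_k)-\E(\rmt(x)\mid\kF_{k-1}).
\end{equation*}
The increments are orthogonal, so $\var(\rmt(x))=\sum_k\E(\Delta_k^2)$, and the usual tensorization bound gives $\E(\Delta_k^2)\le\tfrac12\E((\rmt(x)-\rmt^{(k)}(x))^2)$, where $\rmt^{(k)}(x)$ recomputes $\rmt(x)$ after replacing $S^{x_k}$ by an independent copy. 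Since along any self-avoiding path each vertex is the source of exactly one jump, and the cost of a jump $(u,v)$ does not depend on the walk started at its endpoint $v$, one obtains $|\rmt(x)-\rmt^{(k)}(x)|\le\mathrm{cost}_k^{\ast}\vee\widetilde{\mathrm{cost}}_k^{\ast}$, with $\mathrm{cost}_k^{\ast}$ the (original) cost of the jump leaving $x_k$ along an optimal path for $\rmt(x)$ — equal to $0$ if $x_k$ lies on no optimal path — and $\widetilde{\mathrm{cost}}_k^{\ast}$ its analogue after resampling. Summing over $k$ yields the energy bound
\begin{equation*}
\var(\rmt(x))\ \le\ \E\Big(\sum_{e\in\gamma^{\ast}}\mathrm{cost}(e)^2\Big)\ \le\ \E\Big(\big(\max_{e\in\gamma^{\ast}}\mathrm{cost}(e)\big)\,\rmt(x)\Big)
\end{equation*}
for $\gamma^{\ast}$ an optimal path from $0$ to $x$, and similarly the predictable quadratic variation $\sum_k\E(\Delta_k^2\mid\kF_{k-1})$ is bounded by $C\,(\max_{e\in\gamma^{\ast}}\mathrm{cost}(e))\,\rmt(x)$.

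The crux — and the main obstacle — is to show that optimal paths avoid costly jumps: with overwhelming probability $\max_{e\in\gamma^{\ast}}\mathrm{cost}(e)=\kO\big((\log|x|_1)^{A_0}\big)$ for a suitable $A_0$, while $\rmt(x)=\kO(|x|_1)$ (the latter from Lemma \ref{l1} and \eqref{est-expec}). Heuristically, a jump of size $s$ costs at least $s$ (the walk needs $s$ steps), while a jump whose cost is only $\kO(s)$ occurs, for a fixed pair of endpoints, with probability stretched-exponentially small in $s$; hence over the $\kO(|x|_1)$ vertices of a near-optimal path confined to ${\rm B}(C|x|_1)$ there is whp no ``cheap'' long jump of size larger than $C\log|x|_1$, and an ``expensive'' jump (cost far exceeding its size) is never used because rerouting it through nearest-neighbour steps, which by the shape theorem costs $\kO(\mathrm{size})$, is strictly cheaper. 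Making this rigorous uses Lemma \ref{l1} — which controls $\pp(\rmt(u,v)\ge t)$ for $t\ge C_1|u-v|_1$ — together with the deterministic inequality $t(u,v)\ge|u-v|_1$ and union bounds over the path's vertices; it is close in spirit to the length estimates behind Proposition \ref{loop}. This simultaneously gives the $L^2$ bound $\var(\rmt(x))=\kO\big(|x|_1(\log|x|_1)^{A_0}\big)$ and licenses passing to the truncated passage time $\rmt^{\le N}(x)$, $N=(\log|x|_1)^{A_0}$, defined by forbidding any jump of cost exceeding $N$: one checks $\pp(\rmt^{\le N}(x)\ne\rmt(x))\le e^{-(\log|x|_1)^{c}}$ and $|\E\rmt^{\le N}(x)-\E\rmt(x)|=\kO(1)$ by integrating the tail of Lemma \ref{l1}.

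For $\rmt^{\le N}(x)$ the resampling analysis gives martingale increments bounded by $N$ outside a $\kF_{k-1}$-measurable event of probability $\le e^{-(\log|x|_1)^{c}}$ — the bad event being that the original cost of the (at-most-distance-$N$) jump leaving $x_k$ in the resampled optimal path is atypically large — which is handled by a further truncation, or directly via a Bernstein/Freedman-type martingale inequality with a stretched-exponential tail correction; in any case one ends with predictable quadratic variation $\le CN|x|_1$. Feeding this into Freedman's inequality, $\pp\big(|\sum_k\Delta_k|\ge s\big)\le2\exp\!\big(-\tfrac{s^2/2}{CN|x|_1+Ns}\big)$, and choosing $s=t\sqrt{|x|_1}$ with $t\le C\sqrt{|x|_1}$ so that the first term in the denominator dominates, gives
\begin{equation*}
\pp\big(|\rmt^{\le N}(x)-\E\rmt^{\le N}(x)|\ge t\sqrt{|x|_1}\big)\ \le\ 2\exp\big(-c\,t^2/(\log|x|_1)^{A_0}\big).
\end{equation*}
Combining with the $\kO(1)$ gap between the means and the truncation error $e^{-(\log|x|_1)^{c}}$, and restricting to $t\ge(2+\log|x|_1)^{A}$ with $A$ large enough that $t^2/(\log|x|_1)^{A_0}$ dominates both $t^{a}$ for some $a\in(1,2)$ and $(\log|x|_1)^{c}$, yields the stated bound in the range $(2+\log|x|_1)^{A}\le t\le C\sqrt{|x|_1}$. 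The routine parts are the martingale decomposition, the resampling/energy identity and Freedman's inequality; the genuinely delicate step is the rerouting estimate controlling $\max_{e\in\gamma^{\ast}}\mathrm{cost}(e)$, which — unlike capping i.i.d.\ weights in first-passage percolation — must contend with the fact that passage times between distant sites are large and strongly correlated through the shared random walks.
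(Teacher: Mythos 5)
First, a point of orientation: the paper does not prove this statement at all --- it is quoted directly from Kubota \cite[Theorem 1.4]{K}, so there is no internal proof to compare against; the relevant comparison is with the cited source. Your sketch is in the same family as what that reference actually does (Kesten's martingale method adapted to the frog model: decomposition over the walks, resampling bounds via the cost of the jump leaving the resampled site, control of jump costs along optimal paths, truncation, and a Freedman/Kesten-type martingale inequality), so the overall route is the right one rather than a genuinely different argument.

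There is, however, a concrete gap in the quantitative bookkeeping. You fix the truncation level $N=(\log|x|_1)^{A_0}$ once and for all, and all of your error terms --- $\pp({\rm T}^{\le N}(x)\neq {\rm T}(x))$, the probability that an optimal path uses a jump of cost exceeding $N$, and the ``bad event'' in the increment bound --- are only of size $e^{-(\log|x|_1)^{c}}$. But the lemma must hold for all $t$ up to $C\sqrt{|x|_1}$, where the target $e^{-bt^{a}}$ is stretched-exponentially small in $|x|_1$; an additive error $e^{-(\log|x|_1)^{c}}$ then dwarfs $e^{-bt^{a}}$, so your final estimate fails at the upper end of the range. The standard repair is to let the truncation scale with the deviation, e.g.\ $N\asymp t^{\kappa}$, balancing the Freedman exponent $t^{2}/N=t^{2-\kappa}$ against the union-bound error $|x|_1^{2d}e^{-cN^{\varepsilon_2}}=e^{-ct^{\kappa\varepsilon_2}+O(\log|x|_1)}$; the hypothesis $t\ge(2+\log|x|_1)^{A}$ is precisely what renders the entropy term harmless in that balance. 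Two further points need care: (i) the correct one-sided resampling bounds are ``crossed'' (the \emph{original} cost of the jump leaving $x_k$ along the \emph{resampled} optimal path, and vice versa), and since these still depend on the walk at $x_k$ one needs decoupled quantities of the type ${\rm T}^{[z]}$, ${\rm T}_1$ (as in \eqref{def:t1} and Lemma \ref{lemttt}) to get increments one can actually control; (ii) your bad event is not $\kF_{k-1}$-measurable, so Freedman's inequality with ``increments bounded outside a predictable event'' does not apply as stated --- one needs Kesten's martingale inequality for increments dominated by random variables with uniformly controlled stretched-exponential moments, or a truncation performed on the increments themselves.
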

As a direct consequence of Lemmas \ref{l1} and \ref{l2}, we have the following.
\begin{cor} \label{cor1}
	There exists a positive constant $A$ such that 
	$${\rm Var}({\rm T}(x)) =\kO(|x|_1 (1+\log |x|_1)^{2A}).$$
\end{cor}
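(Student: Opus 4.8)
The plan is to bound the variance through the layer-cake identity
\begin{equation*}
\var(\rmt(x)) = \int_0^\infty 2s\, \pp\big(|\rmt(x)-\E\rmt(x)| \geq s\big)\, ds,
\end{equation*}
splitting the range of $s$ into three pieces tailored to the three available inputs: the trivial bound $\pp(\cdot)\leq 1$ for small $s$, the concentration estimate of Lemma~\ref{l2} for moderate $s$, and the deviation estimate of Lemma~\ref{l1} (together with $\rmt(x)\geq 0$ and \eqref{est-expec}) for large $s$.

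Concretely, fix the free constant in Lemma~\ref{l2} to be some $C_2$ that is at least $C_1$ (the constant of Lemma~\ref{l1}) and at least the constant $C$ in \eqref{est-expec}, and let $A,a,b$ be the resulting constants; put $t_0=(2+\log|x|_1)^A$. On $0\leq s\leq t_0\sqrt{|x|_1}$ I would just use $\pp(\cdot)\leq 1$, so this part contributes at most $(t_0\sqrt{|x|_1})^2=|x|_1(2+\log|x|_1)^{2A}$, which is the dominant term. On $t_0\sqrt{|x|_1}\leq s\leq C_2|x|_1$, writing $s=t\sqrt{|x|_1}$ with $t_0\leq t\leq C_2\sqrt{|x|_1}$, Lemma~\ref{l2} gives $\pp(|\rmt(x)-\E\rmt(x)|\geq t\sqrt{|x|_1})\leq e^{-bt^a}$, so a change of variables bounds this part by $2|x|_1\int_0^\infty t\,e^{-bt^a}\,dt=\kO(|x|_1)$, the integral being finite. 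On $s\geq C_2|x|_1$ the lower-tail event $\{\E\rmt(x)-\rmt(x)\geq s\}$ is empty, since $\rmt(x)\geq 0$ and $\E\rmt(x)\leq C|x|_1\leq s$ by \eqref{est-expec}, while the upper-tail event forces $\rmt(x)\geq s\geq C_1|x|_1$, so Lemma~\ref{l1} yields $\pp(\rmt(x)\geq s)\leq e^{-s^{\varepsilon_1}}$; hence this part contributes at most $2\int_{C_2|x|_1}^\infty s\,e^{-s^{\varepsilon_1}}\,ds=\kO(1)$. Summing the three contributions gives $\var(\rmt(x))=\kO\big(|x|_1(2+\log|x|_1)^{2A}\big)=\kO\big(|x|_1(1+\log|x|_1)^{2A}\big)$, the last step being harmless for $|x|_1$ large while the finitely many small values of $x$ are absorbed into the constant (e.g.\ $\var(\rmt(0))=0$).

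There is no serious obstacle here: the argument is a routine truncation. The only points requiring a little care are that the three $s$-ranges genuinely cover $[0,\infty)$ and dovetail with the hypotheses of Lemmas~\ref{l1} and~\ref{l2} — which is exactly why one fixes the free constant in Lemma~\ref{l2} to dominate both $C_1$ and the constant in \eqref{est-expec} — and that the lower tail in the far range is disposed of using positivity of $\rmt(x)$, since Lemma~\ref{l1} controls only the upper tail.
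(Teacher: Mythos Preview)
Your argument is correct and follows essentially the same route as the paper: the layer-cake formula for $\var(\rmt(x))$ is split into the same three ranges, with the trivial bound, Lemma~\ref{l2}, and Lemma~\ref{l1} (plus \eqref{est-expec}) handling each piece respectively. The only cosmetic difference is that the paper uses the cutoff $2C|x|_1$ and bounds $\pp(|\rmt(x)-\E\rmt(x)|\geq t)\leq \pp(\rmt(x)\geq t/2)$ directly in the far range, whereas you separate the upper and lower tails explicitly; both treatments are equivalent.
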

\begin{proof}
 We take a positive constant $C$ sufficiently large such that Lemma \ref{l1} and  \eqref{est-expec}   hold. By using the fact $\E (X^2)=\int_{0}^\infty 2t \P(X\ge t)dt$ for any non-negative random variable $X$, we get 
  \begin{eqnarray} 
     {\rm Var}({\rm T}(x))&= &\int_{0}^\infty 2t \P(|{\rm T}(x)-\E {\rm T}(x)|\geq t)dt \notag\\
      &= & \int_{0}^{(2+\log |x|_1)^{A} \sqrt{|x|_1}}  2t \P(|{\rm T}(x)-\E {\rm T}(x)|\geq t)dt \notag  \\
      && +\int^{2C|x|_1}_{(2+\log |x|_1)^{A}\sqrt{|x|_1}}  2t \P(|{\rm T}(x)-\E {\rm T}(x)|\geq t)dt +\int^\infty_{2C|x|_1} 2t \P(|{\rm T}(x)-\E {\rm T}(x)|\geq t)dt.  \label{cal-var}
      \end{eqnarray}
  The first term of the right hand side \eqref{cal-var} can be bounded from above by
  $$\int_{0}^{(2+\log |x|_1)^{A} \sqrt{|x|_1}}2t dt\leq (2+\log |x|_1)^{2A}|x|_1.$$
  By Lemma~\ref{l2}, the second term is bounded from above by
  $$|x|_1 \int_0^{\infty} 2t e^{-bt^a}dt=\kO(|x|_1).$$
  Finally, by \eqref{est-expec} and Lemma~\ref{l1}, the third term is bounded from above by 
   \begin{equation*} \label{third-term-est}
          \int^\infty_{2C|x|_1}2t\P({\rm T}(x)\geq t/2)dt\leq  \int^\infty_{2C|x|_1}2t e^{-(t/2)^{\e_1}}dt =\kO(1).
      \end{equation*}
   Combining these estimates, we get the conclusion.
\end{proof}
\begin{lem} \label{l3}
	There exists a positive constant $\e_2$ such that 
	for any $x, y \in \Z^d$ and $M\geq 1$,
	$$\pp({\rm T}(x,y)=t(x,y)=M) \leq e^{-M^{\varepsilon_2}}.$$
\end{lem}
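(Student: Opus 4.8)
The plan is to exploit two consequences of the event $\{{\rm T}(x,y)=t(x,y)=M\}$: on this event one has ${\rm T}(x,y)\ge M$, and one has $t(x,y)=M$, hence ${\rm S}^x_M=y$; which of the two is useful is governed by the size of $M$ relative to $|x-y|_1$. First, if $M<|x-y|_1$ the event is empty, since the walk cannot cover $\ell_1$-distance $|x-y|_1$ in $M$ steps, so there is nothing to prove; assume $M\ge|x-y|_1$ from now on. If in addition $M\ge C_1|x-y|_1$, then, since $\{{\rm T}(x,y)=t(x,y)=M\}\subseteq\{{\rm T}(x,y)\ge M\}$, Lemma~\ref{l1} applies verbatim and gives $\pp({\rm T}(x,y)=t(x,y)=M)\le e^{-M^{\varepsilon_1}}$.

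The remaining regime is $|x-y|_1\le M<C_1|x-y|_1$, i.e.\ $|x-y|_1>M/C_1$. Here I would forget the condition on ${\rm T}$ and bound $\pp({\rm S}^x_M=y)$ directly. Writing ${\rm S}^x_M-x=\sum_{i=1}^{M}\xi_i$ with $\xi_i$ i.i.d.\ uniform on $\{\pm e_1,\dots,\pm e_d\}$, the constraint $|{\rm S}^x_M-x|_1=|x-y|_1>M/C_1$ forces $\bigl|\sum_{i=1}^{M}(\xi_i)_j\bigr|>M/(dC_1)$ for some coordinate $j$ by pigeonhole; since each coordinate sum is a sum of $M$ independent mean-zero variables taking values in $[-1,1]$, Hoeffding's inequality together with a union bound over $j$ yields $\pp({\rm S}^x_M=y)\le 2d\,e^{-cM}$ with $c=1/(2d^2C_1^2)$. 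In parallel, conditioning on ${\rm S}^x_{M-1}$ and using that a single step hits any prescribed neighbour with probability $1/(2d)$ gives the uniform bound $\pp({\rm S}^x_M=y)\le 1/(2d)$, valid for all $M\ge 1$.

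It remains to assemble these estimates into the stated form. One picks $\varepsilon_2>0$ small (with $\varepsilon_2\le\varepsilon_1$); there is then a threshold $M_0=M_0(d,C_1,\varepsilon_2)$ past which $2d\,e^{-cM}\le e^{-M^{\varepsilon_2}}$, while for $1\le M\le M_0$ the crude bound gives $\pp({\rm S}^x_M=y)\le 1/(2d)\le e^{-M_0^{\varepsilon_2}}\le e^{-M^{\varepsilon_2}}$, the middle inequality holding as soon as $M_0^{\varepsilon_2}\le\log(2d)$, which is compatible with the choice of $M_0$ because $M_0$ grows only polynomially in $1/\varepsilon_2$ (so $\log M_0$ only logarithmically, and $d\ge 2$ makes $\log\log(2d)>0$). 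Combined with the case $M\ge C_1|x-y|_1$, where $e^{-M^{\varepsilon_1}}\le e^{-M^{\varepsilon_2}}$ for $M\ge 1$, this gives the lemma. I expect the only genuinely delicate point to be this small-$M$ regime of the second case, where no exponential decay is available and one must fall back on the constant bound $1/(2d)<e^{-1}$ and on the freedom to shrink $\varepsilon_2$; everything else is a direct appeal to Lemma~\ref{l1} and a textbook concentration estimate for the simple random walk.
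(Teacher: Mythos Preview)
Your argument is correct and follows essentially the same two-case strategy as the paper: use Lemma~\ref{l1} when $M$ is large compared to $|x-y|_1$, and use a concentration bound for the simple random walk when $|x-y|_1$ is comparable to $M$. The paper splits at the threshold $|x-y|_1\lessgtr M^{2/3}$ and invokes the maximal inequality $\pp(\max_{j\le M}|{\rm S}^x_j-x|_1\ge r)\le Ce^{-cr^2/M}$ with $r=M^{2/3}$ (giving $e^{-cM^{1/3}}$), whereas you split at $|x-y|_1\lessgtr M/C_1$ and use Hoeffding on ${\rm S}^x_M$ directly (giving the sharper $e^{-cM}$); your explicit treatment of small $M$ via the crude bound $1/(2d)$ is also fine, and in fact more careful than the paper, which absorbs this into the free choice of $\varepsilon_2$ without comment.
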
	
\begin{proof}
	If $|x-y|_1 \leq M^{2/3}$,  then the result   follows from Lemma \ref{l1}. Assume that $|x-y|_1 \geq M^{2/3}$. Then  a well-known  estimate for the trajectory of random walk (see   \cite[Proposition 2.1.2]{LL}) shows that for some positive constants $c$ and $C$,
	\begin{eqnarray} \label{n2}
	\pp\left(\max_{0\leq j\leq k} |{\rm S}^x_j-x|_1 \geq r\right) \leq C e^{-cr^2/k}.
	\end{eqnarray}
	Therefore,  
	\begin{eqnarray*} 
	\pp(t(x,y)=M) \leq \pp \left(\max_{0\leq j\leq M} |{\rm S}^x_j-x|_1 \geq M^{2/3}\right) \leq C e^{-cM^{1/3}}.
	\end{eqnarray*}
\end{proof}
\subsection{The maximal weight of paths in  site-percolation} 
\subsubsection{The case of independent percolation}
Let  $\{I_x\}_{x\in\Z^d}$ be a collection of  independent  random variables such that $\pp(I_x=1)=1-\pp(I_x=0)=p_x \leq p$ with a parameter $p\in[0,1]$ for all $x\in \Z^d$. For any  $A \subset \Z^d $, we define  the weight of $A$ as
$X(A)=\sum_{x\in A} I_x$.  The  maximal weight of paths in $\kP_L$ is defined as
$$X_L= \max_{\gamma \in \kP_L} X(\gamma).$$
The tail distribution  and  expectation of $X_L$ can be controlled  by the following lemma.
\begin{lem}\label{wsp}
  % \cite[Lemma 6.8]{DHS} 
	There exist  positive constants $A_1$ and $A_2$ such that the following statements hold.
	\begin{itemize}
		\item [(i)] If $\min \{sLp^{1/d}, s\} \geq A_1$ then
		$$\pp\left(X_L \geq s L p^{1/d}\right) \leq \exp\left(-sLp^{1/d}/2\right).$$
		\item [(ii)] For any $p\in(0,1)$ and $L\geq 1$,
	          $$\E\left(X_L\right) \leq A_2Lp^{1/d}.$$
	\end{itemize}
\end{lem}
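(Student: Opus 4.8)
The plan is to reduce the maximal weight $X_L$ to a purely combinatorial quantity, estimate its tail by a first moment bound, and finally tune the constants. Write $\kO=\{x\in\Z^d:I_x=1\}$ for the set of occupied sites. If a path $\gamma=(y_i)_{i=1}^{\ell}\in\kP_L$ has $X(\gamma)=n$, list its occupied vertices as $z_1,\dots,z_n$ in the order they appear along $\gamma$; these are distinct, belong to ${\rm B}(L)$, and by the triangle inequality $\sum_{i=1}^{n-1}|z_{i+1}-z_i|_1\le|\gamma|_1\le L$. Hence $X_L\le N$, where
$$N=\max\Big\{n\ge 0:\ \exists\ \text{distinct}\ z_1,\dots,z_n\in\kO\cap{\rm B}(L)\ \text{with}\ \sum_{i=1}^{n-1}|z_{i+1}-z_i|_1\le L\Big\}.$$
Since $X_L$ is integer-valued, for (i) it suffices to bound $\P(N\ge t)$ with $t=\lceil sLp^{1/d}\rceil$.

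For the union bound, $\{N\ge t\}$ is contained in the union, over all tuples $(z_1,\dots,z_t)$ of distinct sites of ${\rm B}(L)$ with $\sum_{i=1}^{t-1}|z_{i+1}-z_i|_1\le L$, of the events $\{z_1,\dots,z_t\in\kO\}$, and by independence $\P(z_1,\dots,z_t\in\kO)=\prod_i p_{z_i}\le p^t$. To count the tuples I would fix $z_1$ (at most $(2L+1)^d$ choices) and record the increments $v_i=z_{i+1}-z_i\in\Z^d\setminus\{0\}$; grouping by $m_i:=|v_i|_1\ge 1$ and using $\#\{v\in\Z^d:|v|_1=m\}\le C_d m^{d-1}$, the bound $\#\{(m_i):m_i\ge 1,\ \sum_i m_i\le L\}=\binom{L}{t-1}\le (eL/(t-1))^{t-1}$, and the AM--GM inequality $\prod_i m_i^{d-1}\le (L/(t-1))^{(t-1)(d-1)}$, the number of admissible increment sequences is at most $(C_de)^{t-1}(L/(t-1))^{d(t-1)}$. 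Collecting everything and using $(2L+1)^dp\le 3^dL^dp=3^d(Lp^{1/d})^d$ for $L\ge 1$,
$$\P(N\ge t)\ \le\ (2L+1)^d\,p^{\,t}\,(C_de)^{t-1}\Big(\tfrac{L}{t-1}\Big)^{d(t-1)}\ \le\ 3^d\,(Lp^{1/d})^d\,(C_de)^{t-1}\Big(\tfrac{Lp^{1/d}}{t-1}\Big)^{d(t-1)}.$$
The decisive point is that the factor $(2L+1)^d$ is absorbed against a factor $p$, so that the right-hand side involves $L$ only through the quantity $Lp^{1/d}$.

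Now set $u=sLp^{1/d}$ and $t=\lceil u\rceil$ under the hypothesis $\min\{u,s\}\ge A_1$. Since $u\ge A_1\ge 2$ we have $t-1\ge u-1\ge u/2$, hence $Lp^{1/d}/(t-1)\le 2/s$, and the estimate above becomes $\P(N\ge t)\le 3^d(Lp^{1/d})^d\,(2^dC_de/s^d)^{t-1}$. If $A_1=A_1(d)$ is chosen large enough that $2^dC_de/s^d\le e^{-3}$ whenever $s\ge A_1$, this is at most $3^d(Lp^{1/d})^d\,e^{3}e^{-3u}$; a short computation — distinguishing $Lp^{1/d}\le 1$ (where $(Lp^{1/d})^d\le Lp^{1/d}\le 1$) from $Lp^{1/d}>1$ (where $\log(Lp^{1/d})\le Lp^{1/d}= u/s\le u/A_1$) — shows that, after enlarging $A_1$ once more, this is bounded by $e^{-u/2}=\exp(-sLp^{1/d}/2)$, which gives (i) since $\{X_L\ge sLp^{1/d}\}=\{X_L\ge t\}\subseteq\{N\ge t\}$. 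For (ii) I would combine (i) with the trivial inequality $X_L\le|\kO\cap{\rm B}(L)|$: if $Lp^{1/d}\le 1$ then $\E X_L\le (2L+1)^dp\le 3^d(Lp^{1/d})^d\le 3^dLp^{1/d}$, while if $Lp^{1/d}>1$ one writes $\E X_L=\sum_{n\ge 1}\P(X_L\ge n)$, bounds the terms with $n\le A_1Lp^{1/d}$ trivially by $1$ (contributing at most $(A_1+1)Lp^{1/d}$ since $Lp^{1/d}>1$) and the remaining terms by $e^{-n/2}$ via (i) (contributing $\sum_{n\ge1}e^{-n/2}$, a constant, hence $\kO(Lp^{1/d})$), so that $\E X_L\le A_2Lp^{1/d}$ for a suitable $A_2=A_2(d)$.

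The one genuinely delicate point is the combinatorial count of increment sequences together with the bookkeeping that guarantees no factor growing in $L$ survives — in particular, resisting the temptation to union bound directly over all of $\kP_L$ (which would produce a useless factor $C^L$ when $p$ is small) and instead summing over sequences of \emph{occupied} sites, so that the $(2L+1)^d$ choices for the starting point are paid for by a factor $p$. Everything else is a routine Chernoff-type estimate.
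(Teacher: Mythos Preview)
Your argument is correct and self-contained, but it takes a genuinely different route from the paper. The paper does not do any combinatorics itself: it observes that any $\gamma\in\kP_L$ can be embedded in a connected subset of $\Z^d$ containing $0$ of cardinality at most $(d+1)L+1$ (just join the $y_i$'s by shortest lattice paths, and join $0$ to $y_1$), so that $X_L\le N_{(d+1)L}$ where $N_L$ is the maximal weight of a \emph{lattice animal} of size $\le L+1$ containing the origin. It then quotes the greedy lattice animal estimate from \cite[Lemma~6.8]{DHS} for $N_L$, and deduces (i) and (ii) by rescaling and a monotonicity-in-$p$ trick for the small-$p$ regime.

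You instead run the first-moment argument directly: extract the ordered sequence of occupied sites along a path, union-bound over all such sequences, and count them via the increments. This is essentially the proof of the lattice-animal bound specialized to the present setting, so you are not really doing something new but rather reproving (a version of) the cited lemma on the spot. What your approach buys is that it is elementary and avoids an external reference; what the paper's approach buys is brevity and a cleaner separation of concerns, since the lattice-animal bound is a standard, reusable tool. Both lead to the same dependence on $Lp^{1/d}$, and both handle the potentially dangerous prefactor $(2L+1)^d$ the same way---by pairing it with one factor of $p$ so that only the combination $Lp^{1/d}$ survives.
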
  
\begin{proof} We start by recalling a result   in \cite{DHS} on the maximal weight of lattice animals (i.e., connected sets containing $0$). Define 
\begin{equation}
N_L=\sup \{X(A): 0\in A, \textrm{$A$ is connected, } |A|\leq L+1  \}.
\end{equation}
In Lemma 6.8 in \cite{DHS}, the authors show that there exist  positive constants $A_1'$ and $A_2'$ such that 
\begin{itemize}
	\item [(a)] if $Lp^{1/d} >1$ and $s\geq A_1'$, then
	$$\pp\left(N_L \geq s L p^{1/d}\right) \leq \exp\left(-sLp^{1/d}/2\right),$$
	\item [(b)] for any $p\in(0,1)$ and $L\geq 1$,
	$$\E\left(N_L\right) \leq A_2'Lp^{1/d}.$$
\end{itemize}
Let $\gamma =(y_i)_{i=1}^{\ell} \in \kP_L$. Then $\gamma \subset {\rm B(L)}$ and $\sum_{i=2}^{\ell} |y_{i}-y_{i-1}|_1 \leq L$.  Thus $\sum_{i=1}^{\ell} |y_{i}-y_{i-1}|_1 \leq (d+1)L$ with $y_0=0$. Considering  shortest paths from $y_{i-1}$ to $y_i$ for $1 \leq i\leq \ell$ in the lattice $\Z^d$, there exists a connected set $A\subset\Z^d$ such that $\gamma\subset A$ and $|A| \leq 1+ \sum_{i=1}^{\ell} |y_{i}-y_{i-1}|_1 \leq (d+1)L+1$. Therefore $X(\gamma) \leq X(A) \leq N_{(d+1)L}$ for all $\gamma \in \kP_L$. Hence 
\begin{equation} \label{xleqn}
X_L \leq N_{(d+1)L}.
\end{equation} 
Using \eqref{xleqn} and (b), we obtain (ii). We now prove that (i) holds for $A_1=(d+1)A_1'$ with $A_1'$ as in (a). Let us  denote by $\pp_p$ the probability measure of site-percolation with density $p$.  Using \eqref{xleqn},
\begin{eqnarray}
\pp_{p} \left(X_L > s Lp^{1/d}\right) \leq \pp_p \left(N_{(d+1)L} > s Lp^{1/d}\right) = \pp_p \left(N_{(d+1)L} > \tfrac{s}{d+1} (d+1)Lp^{1/d}\right).
\end{eqnarray}
Suppose $\min \{sLp^{1/d}, s\} \geq A_1$. If $(d+1)Lp^{1/d} >1$, then using (a) and $\tfrac{s}{d+1} \geq A_1'$,
\begin{equation} \label{ac1}
\pp_p \left(N_{(d+1)L} > \tfrac{s}{d+1} (d+1)Lp^{1/d}\right) \leq \exp\left(-sLp^{1/d}/2\right).
\end{equation}
For the case $(d+1)Lp^{1/d} \leq 1$, we define $q=L^{-d}$. Then $p<q$ and $(d+1)Lq^{1/d}>1$. Thus using the monotonicity of $\pp_p$ in $p$ and (a),
\begin{eqnarray} \label{ac2}
\pp_p \left(N_{(d+1)L} > sLp^{1/d}\right) &\leq& \pp_q \left(N_{(d+1)L} > sLp^{1/d}\right)  \notag \\
&=& \pp_q \left(N_{(d+1)L} > \tfrac{sLp^{1/d}}{d+1} (d+1)Lq^{1/d}\right) \leq \exp\left(-sLp^{1/d}/2\right),
\end{eqnarray}
since $\tfrac{sLp^{1/d}}{d+1} \geq A_1'$. Combining \eqref{ac1} and \eqref{ac2} we get  (i).
\end{proof}

\subsubsection{The case of $M$-dependent percolation}

Given $M \geq 1$, let $\{I_x, x \in \Z^d\}$ be a collection of Bernoulli random variables  such that
\begin{itemize}
	\item [(E1)] $\{I_x, x \in \Z^d\}$ is $M$-dependent, i.e., for all $x\in \Z^d$, the variable $I_x$ is independent of all variables $\{I_y: |y-x|_1 >M\}$, 
	\item[(E2)]  $q_M = \sup_{x \in \Z^d} \E(I_x)  \leq (3M+1)^{-d}$.
\end{itemize}
For any path $\gamma$ in $\Z^d$, we also define 
\begin{equation}
X(\gamma) = \sum_{x \in \gamma}I_x, \qquad X_L= \max_{\gamma \in \kP_L} X(\gamma).
\end{equation}
\begin{lem} \label{wdsp}
	Let $M\geq 1 $ and $\{I_x, x \in \Z^d\}$ be a collection of random variables  satisfying (E1) and (E2). Then there exists a  positive constant $C=C(d)$  such that 
	\begin{itemize}
		\item [(i)] for any $L \geq 1$,
		\begin{equation}
		\E \left(X_L\right) \leq  C L M^{d+1} q_M^{1/d},
		\end{equation} 
		\item[(ii)] if $n \geq  C M^d \max \{1,  MLq_M^{1/d}\}$, then
		\begin{equation}
		\pp \left(X_L \geq n \right) \leq  2^d \exp (-n/(16M)^d).  
		\end{equation}
	\end{itemize}
\end{lem}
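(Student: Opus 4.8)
The plan is to reduce the $M$-dependent problem to the independent one of Lemma \ref{wsp} by a block (tessellation) argument. Partition $\Z^d$ into cubes $Q_z = Mz + [0,M)^d \cap \Z^d$ of side length $M$, indexed by $z \in \Z^d$; for a path $\gamma \in \kP_L$, collapse it to the set of block-indices $\widehat\gamma = \{z : Q_z \cap \gamma \neq \emptyset\}$. Since $\gamma \subset {\rm B}(L)$ and $|\gamma|_1 \le L$, a short geometric estimate (as in the proof of Lemma \ref{wsp}, using shortest lattice paths to connect consecutive vertices) shows $\widehat\gamma$ can be covered by a connected set of cubes of cardinality $\kO(L/M)$, i.e. $\widehat\gamma \in \kP_{C'L/M}$ after rescaling by $M$. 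Now define, for each block $z$ and each of the $2^d$ parity classes $\sigma \in \{0,1\}^d$, the block-indicator $J^\sigma_z = \I(\exists x \in Q_z : I_x = 1)$ restricted to blocks $z \equiv \sigma \pmod 2$. Then $X(\gamma) = \sum_{x \in \gamma} I_x \le \sum_{z \in \widehat\gamma} \sum_{x \in Q_z} I_x$, and we bound the inner sum crudely by $M^d$ times whether the block is "occupied": $X(\gamma) \le M^d \sum_{z \in \widehat\gamma} J_z$ where $J_z = \max_\sigma J^\sigma_z = \I(\exists x \in Q_z: I_x=1)$.

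The key point is the independence gained by the parity split: if $z$ and $z'$ are distinct blocks with $z \equiv z' \pmod 2$, then any $x \in Q_z$ and $x' \in Q_{z'}$ satisfy $|x-x'|_1 > M$, so by (E1) the families $\{I_x : x \in Q_z\}$ and $\{I_{x'} : x' \in Q_{z'}\}$ are independent; hence within a fixed parity class $\sigma$, the variables $\{J^\sigma_z\}$ are independent. By the union bound and (E2), $\pp(J^\sigma_z = 1) \le M^d q_M =: p$, and by assumption (E2) we have $p \le M^d (3M+1)^{-d} \le 1$, with $p^{1/d} \le M q_M^{1/d}$. So for each parity class, $\sum_{z \in \widehat\gamma} J^\sigma_z$ is exactly the weight of a path (in the rescaled lattice, of the appropriate length $\kO(L/M)$) in an \emph{independent} site-percolation of density $\le p$, and Lemma \ref{wsp} applies directly. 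Summing the $2^d$ parity contributions and multiplying by the factor $M^d$ from collapsing blocks gives both (i) and (ii): for (i), $\E(X_L) \le M^d \sum_\sigma \E(\max_{\widehat\gamma} \sum J^\sigma_z) \le M^d \cdot 2^d \cdot A_2 (C'L/M) p^{1/d} \le C L M^{d+1} q_M^{1/d}$; for (ii), if $X_L \ge n$ then some parity class has weight $\ge n/(2^d M^d)$, and applying Lemma \ref{wsp}(i) with $s$ chosen so that $sLp^{1/d} \asymp n/(2^d M^d)$ (the hypothesis $n \ge C M^d \max\{1, MLq_M^{1/d}\}$ is exactly what makes both $s \ge A_1$ and $sLp^{1/d} \ge A_1$ hold) yields the bound $2^d \exp(-n/(16M)^d)$ after a union bound over $\sigma$.

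The main obstacle is bookkeeping the constants so that the threshold hypothesis in (ii) translates cleanly into the applicability condition $\min\{sLp^{1/d}, s\} \ge A_1$ of Lemma \ref{wsp}(i) — one must track how the length of the rescaled path ($\kO(L/M)$) and the density $p = M^d q_M$ interact, and verify that the geometric collapse of $\gamma$ to $\widehat\gamma$ genuinely produces a path (or a lattice animal) of the claimed size so that Lemma \ref{wsp} is legitimately invoked. A secondary technical point is that $\widehat\gamma$ need not be a self-avoiding path in the block lattice even though $\gamma$ is self-avoiding in $\Z^d$; this is handled by passing through the lattice-animal version (the set $N_L$ in the proof of Lemma \ref{wsp}) rather than $\kP_L$ itself, exactly as \eqref{xleqn} does, so no essential new difficulty arises. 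Everything else is a routine union bound over the $2^d$ parity classes.
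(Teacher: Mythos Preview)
Your proposal is correct and follows essentially the same route as the paper's proof: tessellate $\Z^d$ into blocks, split into $2^d$ parity classes so that blocks in the same class are at $|\cdot|_1$-distance $>M$ (giving independence of the block indicators via (E1)), bound the contribution of each block by its volume times an occupancy indicator, project $\gamma$ to a rescaled path/animal of length $\kO(L/M)$, and apply Lemma~\ref{wsp} with density $p \le (\text{block volume})\cdot q_M$ before taking a union bound over the $2^d$ classes. The paper uses side-$3M$ boxes $B^M_{i,z}=3M(w_i+2z)+[0,3M]^d$ rather than your side-$M$ boxes, which makes the separation check slightly more generous but yields the same structure and the same constants up to dimensional factors; your observation that $\widehat\gamma$ need not be self-avoiding and should be handled via the lattice-animal bound is exactly the point absorbed in the paper by working with $\kP_{\lceil L/(3M)\rceil}$ after the projection.
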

\begin{proof} 
	For each $M\geq 1$, let us consider a standard tessellation of $\Z^d$ constructed as follows. Enumerate $\{0,1\}^d$ as $\{w_i, i=1,\ldots, 2^d\}$. Then for any $i\in\{1,\ldots,2^d\}$ and $z\in\Z^d$, we define
	\begin{equation}
	B_{i,z}^M= 3M(w_i+2z)+[0,3M]^d.
	\end{equation} 
Then $(B_{i,z}^M)_{i,z}$ are boxes of side length $3M$ satisfying
	\begin{itemize}
		\item[(a)] for all $y\in \Z^d$, there exists ${\rm B}^M_{i,z}$ containing $y$,
	%	\item[(b)] if $(i,z) \neq (i',z')$ then ${\rm B}^M_{i,z} \cap {\rm B}^M_{i',z'} = \varnothing$,
		\item[(b)] for any $i=1,\ldots,2^d$, the boxes in the $i$-th group,  $({\rm B}^M_{i,\cdot})$ satisfy that  the distance $|\cdot|_1$ between two arbitrary boxes is larger than $3M$.
	\end{itemize}
	 For $i=1,\ldots, 2^d$,  $z \in \Z^d$ and $\gamma \in \kP_L$,  define
	$$ X_{i,z}(\gamma) = X(\gamma \cap \rmb) = \sum_{x \in \gamma \cap  \rmb} I_x.$$
	Then by (a), 
	\begin{eqnarray} \label{25}
	X(\gamma) \leq   \sum_{i=1}^{2^d} \sum_{z\in \Z^d }  X_{i,z}(\gamma).
	\end{eqnarray}
	It is clear that for all $i=1, \ldots, 2^d$,
	\begin{eqnarray} \label{ayiz}
 \sum_{z\in \Z^d } X_{i,z}(\gamma)  \leq (3M+1)^d \sum_{z\in \eta^{i,M}} Y^i_z,
	\end{eqnarray}
	where	$\eta^{i,M}$ is the projected path of $\gamma$ defined by 
	$$\eta^{i,M}=\{z\in \Z^d:  \gamma \cap \rmb \neq \varnothing \},$$
	and 
		$$Y^i_z=\I \left(\textrm{$\exists x   \in {\rm B}^M_{i,z}$ such that $I_x=1$}\right).$$
	Since  $ \gamma \in \kP_L$, we have
	$$\eta^{i,M} \in \kP_{\lceil L/(3M)\rceil}.  $$
	Hence,
	\begin{eqnarray}
	\sum_{z\in \eta^{i,M} } Y^i_z \leq\max_{\eta \in \kP_{\lceil L/(3M) \rceil}} \sum_{z\in \eta } Y^i_z =:  X_{L,M}^i.
	\end{eqnarray}
	Combining this inequality with \eqref{25} and \eqref{ayiz} yields that
	\begin{eqnarray} \label{29}
X_L = \max_{\gamma \in \kP_L}	\sum_{i=1}^{2^d} \sum_{z\in\Z^d}X_{i,z}(\gamma) \leq (3M+1)^d \sum_{i=1}^{2^d} X_{L,M}^i.
	\end{eqnarray}
	By (b) and (E1),  $(Y^i_z)_{z\in \Z^d}$ are independent  Bernoulli random variables. Moreover,  by the union bound and (E2)  
	\begin{equation} \label{pm}
	\begin{split}
	p_M: = \sup_{(i,z)}\E(Y^i_z) &= \sup_{(i,z)}\pp(\exists x \in {\rm B}^M_{i,z}: I_x =1)\\
	&\leq (3M+1)^{d} q_M\leq 1. 
	\end{split}
	\end{equation}
	Now applying Lemma \ref{wsp} to the set of random variables $(Y^i_z)_{z\in \Z^d}$ and the set of paths $\kP_{\lceil L/(3M) \rceil}$, we get 
	\begin{eqnarray} \label{t1}
	\E(X_{L,M}^i) &\leq& A_2\lceil L/(3M) \rceil p_M^{1/d},
	\end{eqnarray}
	with $A_2$ as in Lemma \ref{wsp} (ii). 
	Combining \eqref{29}, \eqref{pm} and \eqref{t1} gives 
	\begin{eqnarray} \label{42}
	\E(X_L) &\leq& A_22^d \lceil L/(3M) \rceil (3M+1)^{d} p_M^{1/d}  \notag \\
	& \leq &C L M^{d+1} q_M^{1/d}, 
	\end{eqnarray}
for some $C=C(d)$. This proves (ii). We now turn to prove (i). Observe that by \eqref{29}, for all $n$
\begin{eqnarray}
\pp\left(X_L \geq n\right) \leq \pp\left(\sum_{i=1}^{2^d} X_{L,M}^i  \geq n/ (4M)^d\right) \leq  \sum_{i=1}^{2^d} \pp\left( X_{L,M}^i  \geq n/ (8M)^d\right). 
\end{eqnarray}
By Lemma \ref{wsp} (i), for all $i=1,\ldots, 2^i$,
\begin{equation*}
\pp\left( X_{L,M}^i  \geq n/ (8M)^d\right) = \pp\left( X_{L,M}^i  \geq \frac{n}{(8M)^d \lceil L/(3M) \rceil p_M^{1/d}}  \lceil L/(3M) \rceil p_M^{1/d} \right)   \leq \exp\left(-n/(2(8M)^{d})\right),
\end{equation*}
provided that 
\begin{equation} \label{na1}
\min \Big \{\frac{n}{(8M)^d}, \frac{n}{(4M)^d \lceil L/(3M) \rceil p_M^{1/d}} \Big \} \geq A_1,
\end{equation}
with $A_1$ as in Lemma \ref{wsp} (i).  Using \eqref{pm},  the condition \eqref{na1} follows if 
\begin{equation}
n \geq A_1 (8M)^d \max \{1, \lceil L/(3M) \rceil (3M+1)q_M^{1/d}\},
\end{equation}
which is satisfied if 
\begin{equation} \label{na1m}
n \geq  C M^d \max \{1,  MLq_M^{1/d}\},
\end{equation}
for some $C=C(A_1,d)$. In conclusion, if \eqref{na1m} holds then
\begin{eqnarray}
\pp\left(X_L \geq n\right) \leq 2^d \exp (-n/(16M)^d). 
\end{eqnarray}
\end{proof}

\subsection{Entropy}  Let $(\Omega, \kF, \mu)$ be a probability space and $X\in L^1(\Omega, \mu)$ an non-negative random variable. Then the entropy of $X$ with respect to $\mu$ is defined as 
$${\rm Ent}_{\mu}(X)=\E_{\mu} (X \log X) - \E_{\mu}(X) \log \E_{\mu}(X).$$
Note that by Jensen's inequality, ${\rm Ent}_{\mu}(X) \geq 0$. The following tensorization  property of entropy is well-known and we refer the reader to \cite{BLM} for the proof. 
\begin{lem} \cite[Theorem 4.22]{BLM} \label{ent}
	Let $X$ be a non-negative $L^2$ random variable on a product space 
	$$\left(\prod_{i=1}^{\infty} \Omega_i, \kF, \mu= \prod_{i=1}^{\infty} \mu_i\right),$$
	where $\kF= \bigvee_{i=1}^{\infty} \kG_i$, and each triple $(\Omega_i, \kG_i, \mu_i)$ is a probability space. Then 
	$${\rm Ent}_{\mu}(X) \leq \sum_{i=1}^{\infty} \E_{\mu} {\rm Ent}_i(X),$$
	where ${\rm Ent}_i(X)$ is the entropy of $X(\omega)=X((\w_1, \ldots, \w_i, \ldots))$ with respect to $\mu_i$, as a function of the $i$-th coordinate (with all other values fixed).
\end{lem}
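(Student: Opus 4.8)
The plan is to obtain Lemma~\ref{ent} from the Gibbs variational (duality) description of entropy, combined with a one-dimensional telescoping adapted to the product structure of $\mu$. Recall the duality formula: for every non-negative $f$ with $\E_\mu(f\log^+ f)<\infty$,
$${\rm Ent}_\mu(f)=\sup\Big\{\E_\mu(fg):\ \E_\mu(e^g)\le 1\Big\},$$
and, applied in a single coordinate with all other coordinates frozen,
$${\rm Ent}_i(X)=\sup\Big\{\E_{\mu_i}(Xg):\ \E_{\mu_i}(e^g)\le 1\Big\}.$$
The first identity is the only input we genuinely need from \cite{BLM}; it can also be proved directly by optimizing $g\mapsto \E_\mu(fg)-\log\E_\mu(e^g)$. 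Given the formula, it suffices to fix an admissible test function $g$ (i.e.\ $\E_\mu(e^g)\le 1$) and show $\E_\mu(Xg)\le\sum_{i\ge 1}\E_\mu({\rm Ent}_i(X))$, then take the supremum over such $g$.

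To do this, set $\kF_k=\sigma(\w_1,\dots,\w_k)$ with $\kF_0$ trivial, and decompose $g$ along the filtration by
$$g_k:=\log\frac{\E_\mu(e^g\mid\kF_k)}{\E_\mu(e^g\mid\kF_{k-1})}.$$
Telescoping and Lévy's upward theorem give $\sum_{k\ge 1}g_k=\log\big(e^g/\E_\mu(e^g)\big)=g-\log\E_\mu(e^g)\ge g$, since $\E_\mu(e^g)\le 1$; as $X\ge 0$ this yields $\E_\mu(Xg)\le\sum_{k\ge 1}\E_\mu(Xg_k)$. The key point, where the product structure enters, is that for each $k$ the function $e^{g_k}$, viewed as a function of the $k$-th coordinate with all others frozen, has $\mu_k$-integral exactly $1$: the denominator $\E_\mu(e^g\mid\kF_{k-1})$ does not depend on $\w_k$, while integrating the numerator $\E_\mu(e^g\mid\kF_k)$ over $\w_k$ against $\mu_k$ returns $\E_\mu(e^g\mid\kF_{k-1})$ by Fubini. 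Hence $g_k$ is an admissible test function for ${\rm Ent}_k$ in the frozen coordinates, so $\E_{\mu_k}(Xg_k)\le{\rm Ent}_k(X)$ pointwise; taking $\E_\mu$ and summing over $k$ gives the desired bound.

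The main obstacle I expect is purely measure-theoretic: justifying the regular conditional expectations on the infinite product, the convergence $\E_\mu(e^g\mid\kF_k)\to e^g$, and the interchange of the infinite sum with $\E_\mu$. A clean way to sidestep all of this is to first prove the finite-product version by induction on the number of factors, using the exact two-factor identity
$${\rm Ent}_{\mu_1\otimes\mu_2}(X)=\E_{\mu_1}\big({\rm Ent}_{\mu_2}(X)\big)+{\rm Ent}_{\mu_1}\big(\E_{\mu_2}(X)\big)$$
(a one-line computation from the definition), together with the convexity and $1$-homogeneity of $f\mapsto{\rm Ent}_\mu(f)$ — themselves immediate from the duality formula — which via Jensen give ${\rm Ent}_{\mu_1}(\E_{\mu_2}X)\le\E_{\mu_2}\big({\rm Ent}_{\mu_1}(X)\big)$. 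Having the inequality for $n$ factors, one passes to the limit by applying it to the bounded martingale approximations $\E_\mu(X\mid\kF_n)$, noting that ${\rm Ent}_i(\E_\mu(X\mid\kF_n))$ vanishes for $i>n$ and is dominated by $\E_\mu({\rm Ent}_i(X)\mid\kF_n)$ for $i\le n$, and that ${\rm Ent}_\mu(\E_\mu(X\mid\kF_n))\to{\rm Ent}_\mu(X)$ (using $X\in L^2$ for the required integrability). Either route reduces the lemma to the single tool of the entropy duality formula.
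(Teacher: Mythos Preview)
The paper does not prove this lemma at all: it merely states the tensorization inequality and refers to \cite{BLM} for the proof. So there is nothing to compare your approach against in the paper itself.

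Your proposal is correct and follows one of the standard routes to tensorization of entropy (essentially the argument in Ledoux and in \cite{BLM}): use the Gibbs variational formula ${\rm Ent}_\mu(f)=\sup\{\E_\mu(fg):\E_\mu(e^g)\le 1\}$, decompose an admissible $g$ along the filtration via $g_k=\log\big(\E_\mu(e^g\mid\kF_k)/\E_\mu(e^g\mid\kF_{k-1})\big)$, and observe that the product structure makes each $g_k$ admissible for ${\rm Ent}_k$ in the frozen coordinates. Your alternative finite-product induction, based on the two-factor identity and the convexity of entropy, is also a standard and clean way to avoid the measure-theoretic technicalities you flag; the passage to the infinite product via $\E_\mu(X\mid\kF_n)$ works as you describe, with the convergence ${\rm Ent}_\mu(\E_\mu(X\mid\kF_n))\to{\rm Ent}_\mu(X)$ following from lower semicontinuity of entropy under $L^1$ convergence together with the reverse inequality from Jensen. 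Either argument is adequate for the purposes of this paper, which only uses the lemma as a black box in the proof of Lemma~\ref{del}.
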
 
The following weighted logarithmic Sobolev inequality will be useful for estimating the entropy of martingale difference. 

\begin{lem} \cite[Lemma 2.6]{MS} \label{bona}
	Assume that $k \geq 2$. Let $f:\{1,\ldots,k\} \mapsto \R$ be a function and $\nu$ be the uniform distribution on $\{1,\ldots,k\}$. Then 
	$${\rm Ent}_{\nu}(f^2) \leq k {\rm E}((f(U)-f(\tilde{U}))^2),$$
	where ${\rm E}$ is the expectation with respect to two independent random variables $U, \tilde{U}$, which have the same distribution $\nu$. 
\end{lem}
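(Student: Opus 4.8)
The plan is to reduce the weighted logarithmic Sobolev inequality to a known, clean form and then control the multiplicative constant by hand. The statement concerns the uniform measure $\nu$ on a finite set $\{1,\dots,k\}$, so the first step is to recall that for the uniform measure on two points there is an elementary two-point logarithmic Sobolev (or ``entropy-energy'') inequality: for any $g:\{1,2\}\to\R$, one has ${\rm Ent}_{\nu_2}(g^2)\le C_0\,{\rm E}\big((g(1)-g(2))^2\big)$ with an explicit small constant $C_0$. This is the base case; it can be proved directly by optimizing the one-variable function obtained after normalizing $g(1)^2+g(2)^2$, or simply cited from the literature on discrete log-Sobolev inequalities. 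The point of isolating it is that the Dirichlet form appearing on the right-hand side of the target inequality, namely ${\rm E}((f(U)-f(\tilde U))^2)$ with $U,\tilde U$ i.i.d.\ uniform, is, up to the factor $k$, exactly the ``complete graph'' Dirichlet form $\tfrac{1}{k^2}\sum_{i,j}(f(i)-f(j))^2$.

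Next I would pass from the two-point inequality to the $k$-point inequality on the complete graph. Here there are two standard routes. The first is a direct spectral-gap/comparison argument: write ${\rm Ent}_\nu(f^2)$ via the decomposition over a spanning structure and use convexity of entropy together with the two-point bound on each edge, summing the contributions; the combinatorial bookkeeping then produces the factor $k$. The second, cleaner route is to invoke the tensorization/sub-additivity of entropy (Lemma \ref{ent}) in the following guise: realize the uniform law on $\{1,\dots,k\}$ as (a function of) a product of simpler coordinates, or alternatively use the known fact that the uniform measure on $\{1,\dots,k\}$ satisfies a log-Sobolev inequality with constant of order $k$ relative to its natural Dirichlet form — a result that follows from Diaconis–Saloff-Coste-type estimates for the complete-graph random walk, whose spectral gap is $k/(k-1)$ and whose log-Sobolev constant is $\Theta(1/\log k)$ times worse, but since we only want an inequality with the crude constant $k$ (not the optimal one), the rougher spectral-gap comparison $\alpha_{\rm LS}\ge c\,\alpha_{\rm gap}/\log(1/\min_i\nu_i)$ suffices and gives a constant comfortably below $k$ for all $k\ge 2$.

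Concretely, the cleanest self-contained argument I would write is: (1) state and prove the two-point case with an explicit constant; (2) observe $\sum_{i,j}(f(i)-f(j))^2 = 2k\sum_i f(i)^2 - 2\big(\sum_i f(i)\big)^2 = 2k^2\,{\rm Var}_\nu(f)$, so that $k\,{\rm E}((f(U)-f(\tilde U))^2) = 2k\,{\rm Var}_\nu(f)$ up to normalization; (3) bound ${\rm Ent}_\nu(f^2)$ in terms of ${\rm Var}_\nu(f)$ and $\|f\|_\infty$-free quantities using the edgewise two-point inequality summed against the complete-graph structure, checking that every edge is used with the correct weight so the total constant is at most $k$. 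The one genuine thing to verify carefully is step (3): one must make sure that when summing the two-point entropy inequalities over all $\binom{k}{2}$ edges, the sub-additivity of entropy is applied in a valid way (it is not literally tensorization, since the coordinates are not independent), so the honest move is to use the variational characterization of entropy, ${\rm Ent}_\nu(g)=\sup\{\E_\nu(gh): \E_\nu(e^h)\le 1\}$, or a direct convexity argument, to legitimately distribute ${\rm Ent}_\nu(f^2)$ across the edges.

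I expect the main obstacle to be precisely this last point — obtaining the factor $k$ (and not, say, $k\log k$ or $k^2$) with a rigorous distribution of the entropy over the edge set of the complete graph. The two-point base case is routine, and the algebraic identity relating the i.i.d.\ Dirichlet form to the variance is immediate; but the passage from ``inequality on each edge'' to ``inequality on the whole set with constant $k$'' is where one has to be careful, and it is cleanest to either cite the relevant discrete log-Sobolev estimate (as the paper in fact does, pointing to \cite{MS}) or to run the spectral-gap comparison, accepting a constant that is $O(k)$ but possibly not sharp — which is all Lemma \ref{bona} claims.
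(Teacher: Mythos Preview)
The paper does not actually prove Lemma~\ref{bona}; it is quoted verbatim from \cite[Lemma~2.6]{MS} and used as a black box, so there is no argument in the paper to compare your sketch against.

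As an independent proof your outline is basically sound, with one caveat. The spectral-gap comparison route you mention works cleanly: for the uniform measure on $\{1,\dots,k\}$ with the complete-graph Dirichlet form one has $k\,{\rm E}\big((f(U)-f(\tilde U))^2\big)=2k\,{\rm Var}_\nu(f)$, and the Diaconis--Saloff-Coste comparison yields ${\rm Ent}_\nu(f^2)\le C(\log k)\,{\rm Var}_\nu(f)$ for an absolute constant $C$, which for large $k$ is far stronger than the claimed factor $k$; small $k$ are handled by hand. By contrast, your ``sum the two-point inequality over all $\binom{k}{2}$ edges'' plan (your step~(3)) is, as you yourself flag, not a legitimate instance of tensorization --- entropy is not sub-additive over an arbitrary edge decomposition of a single probability space --- and turning it into a rigorous argument would amount to reproving the discrete log-Sobolev inequality from scratch rather than deducing it from the two-point case. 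Since the comparison route already delivers the result with room to spare, I would drop the edge-summation idea and either run the comparison argument directly or, as the paper does, cite \cite{MS}.
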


\section{Proof of Theorem \ref{mt}}

 \subsection{ Spatial average of the first passage time} We consider a spatial average of ${\rm T}(x)$ defined as 
 \begin{equation} \label{def:fm}
 {\rm F}_m=\frac{1}{\# {\rm B}(m) } \sum_{z\in {\rm B}(m)} {\rm T}(z,z+x),
 \end{equation}
 where 
 $$m=[|x|_1^{1/4}].$$ 
 \begin{prop} \label{prop1}
 For any $\varepsilon >0$, it holds that 
 	$$|{\rm Var}({\rm T}(x))-{\rm Var}({\rm F}_m)| =\kO(|x|_1^{3/4 +\varepsilon} ).$$
 \end{prop}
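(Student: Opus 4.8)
The plan is to show that $\rmf_m$ is $L^2$-close to $\rmt(x)$ and then transfer this to the variances via the Cauchy--Schwarz bound on covariances. I may assume $|x|_1$ is large enough that $m=[|x|_1^{1/4}]\geq 1$ and $\log|x|_1\geq 1$, the statement being trivial otherwise.

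\textbf{Step 1 (a pathwise comparison).} Since $\rmf_m-\rmt(x)=\frac{1}{\#{\rm B}(m)}\sum_{z\in{\rm B}(m)}\bigl(\rmt(z,z+x)-\rmt(x)\bigr)$, the first step is to bound each summand. Applying the subadditivity \eqref{subadditive} twice in each direction — namely $\rmt(z,z+x)\leq\rmt(z,0)+\rmt(0,x)+\rmt(x,z+x)$ and $\rmt(0,x)\leq\rmt(0,z)+\rmt(z,z+x)+\rmt(z+x,x)$ — yields, for every $z$,
\[
|\rmt(z,z+x)-\rmt(x)|\leq W_z:=\rmt(0,z)+\rmt(z,0)+\rmt(x,z+x)+\rmt(z+x,x)\geq 0.
\]

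\textbf{Step 2 (a uniform second-moment bound).} Next I would record that $\E[\rmt(w)^2]=\kO(|w|_1^2)$ with a constant independent of $w$: writing $\E[\rmt(w)^2]=\int_0^\infty 2t\,\pp(\rmt(w)\geq t)\,dt$ and splitting the integral at $C_1|w|_1$, the part below is at most $C_1^2|w|_1^2$ and, by Lemma \ref{l1}, the tail is at most $\int_0^\infty 2te^{-t^{\e_1}}\,dt=\kO(1)$. Each of the four terms of $W_z$ is, by the translation invariance of the model, distributed as $\rmt(\pm z)$, and $|z|_1\leq dm$ for $z\in{\rm B}(m)$; hence $\E[W_z^2]\leq 4\bigl(\E[\rmt(0,z)^2]+\cdots\bigr)=\kO(m^2)=\kO(|x|_1^{1/2})$, uniformly in $z\in{\rm B}(m)$. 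By convexity of $t\mapsto t^2$,
\[
\E\bigl[(\rmf_m-\rmt(x))^2\bigr]\leq\frac{1}{\#{\rm B}(m)}\sum_{z\in{\rm B}(m)}\E[W_z^2]=\kO(|x|_1^{1/2}),
\]
and in particular $\var(\rmf_m-\rmt(x))=\kO(|x|_1^{1/2})$.

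\textbf{Step 3 (comparing variances).} Finally, writing $\rmf_m=\rmt(x)+(\rmf_m-\rmt(x))$ and expanding, $\var(\rmf_m)-\var(\rmt(x))=\var(\rmf_m-\rmt(x))+2\cov\bigl(\rmt(x),\rmf_m-\rmt(x)\bigr)$, so Cauchy--Schwarz gives
\[
|\var(\rmt(x))-\var(\rmf_m)|\leq\var(\rmf_m-\rmt(x))+2\sqrt{\var(\rmt(x))\,\var(\rmf_m-\rmt(x))}.
\]
Inserting the Step 2 bound together with $\var(\rmt(x))=\kO(|x|_1(1+\log|x|_1)^{2A})$ from Corollary \ref{cor1} gives $\kO(|x|_1^{1/2})+\kO\bigl(|x|_1^{3/4}(1+\log|x|_1)^{A}\bigr)=\kO(|x|_1^{3/4+\e})$ for any $\e>0$, as claimed. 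The exponent $1/4$ in $m$ is exactly what makes the averaging error $\kO(m^2)$ small enough here (and it will still be large enough for the smoothing steps used later in Section 3); the only substantive input is the quadratic moment estimate $\E[\rmt(w)^2]=\kO(|w|_1^2)$, which rests on the stretched-exponential tail of Lemma \ref{l1}, and everything else is routine variance bookkeeping, so I do not anticipate a real obstacle.
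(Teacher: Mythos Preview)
Your proof is correct and follows essentially the same approach as the paper: both use subadditivity to bound $|\rmt(z,z+x)-\rmt(x)|$ by short passage times of order $m$, derive $\E[(\rmf_m-\rmt(x))^2]=\kO(|x|_1^{1/2})$ from the second-moment bound on $\rmt(z)$ for $z\in{\rm B}(m)$, and then transfer this to the variance difference via Cauchy--Schwarz together with Corollary~\ref{cor1}. The only cosmetic differences are that the paper packages the variance comparison as $|{\rm Var}(X)-{\rm Var}(Y)|\leq(\|\hat X\|_2+\|\hat Y\|_2)\|\hat X-\hat Y\|_2$ rather than via the covariance expansion, and obtains $\E[\rmt(z)^2]=\kO(m^2)$ by writing $\E\rmt(z)^2=\var(\rmt(z))+(\E\rmt(z))^2$ and invoking Corollary~\ref{cor1} and \eqref{est-expec}, whereas you extract it directly from the tail bound of Lemma~\ref{l1}; neither change is substantive.
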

\begin{proof}
	For any variables $X$ and $Y$, by writing $\hat{X}=X-\E(X)$ and $||X||_2=(\E(X^2))^{1/2}$ and using the Cauchy-Schwarz inequality, we get
	\begin{eqnarray} \label{e1}
	|{\rm Var}(X)-{\rm Var}(Y)| = |E(\hat{X}^2-\hat{Y}^2)| &\leq &||\hat{X}+\hat{Y}||_2 ||\hat{X}-\hat{Y}||_2 \notag 	\\
	&\leq& (||\hat{X}||_2+||\hat{Y}||_2)||\hat{X}-\hat{Y}||_2.
	\end{eqnarray}
We aim to apply  \eqref{e1} for ${\rm T}(x)$ and ${\rm F}_m$. Observe that  
\begin{eqnarray} \label{e2}
||\hat{\rm F}_m||_2 \leq \frac{1}{\#{\rm B}(m)} \sum_{z\in B_m} ||\hat{\rm T}(z,z+x)||_2= ||\hat{\rm T}(0,x)||_2,
\end{eqnarray}	
by translation invariance. By Corollary \ref{cor1},
\begin{eqnarray} \label{e3}
||\hat{\rm T}(0,x)||_2=\sqrt{{\rm Var}({\rm T}(x))} =\kO(|x|_1^{1/2}(1+\log |x|_1)^{A}).
\end{eqnarray}
Using the subadditivity \eqref{subadditive},
\begin{eqnarray}
 ||\hat{\rm T}(0,x)-\hat{\rm F}_m||_2^2 &=& ||{\rm T}(x)-{\rm F}_m||_2^2 \notag \\
 &=& \frac{1}{\#{\rm B}(m)^2} \left\|\sum_{z\in {\rm B}(m)} ({\rm T}(x)-{\rm T}(z,z+x))\right\|_2^2 \notag\\
&\leq&  \frac{1}{\#{\rm B}(m)^2} \left\|\sum_{z\in {\rm B}(m)} ({\rm T}(z)+{\rm T}(x,z+x)+{\rm T}(z,0)+{\rm T}(z+x,x))\right\|_2^2 .\notag \\
&\leq& \frac{4}{\#{\rm B}(m)} \sum_{z\in {\rm B}(m)} \Big [\E{\rm T}(z)^2+\E{\rm T}(x,z+x)^2+\E{\rm T}(z,0)^2+\E{\rm T}(z+x,x)^2 \Big ]  \notag \\
&\leq& 16 \max_{z\in {\rm B}(m)} \E{\rm T}(z)^2,  \label{tox-fm}
\end{eqnarray}
where we used the following inequality in the $4$-th line, $$ \Big(\sum_{j\in \Lambda} a_j+b_j+c_j+d_j \Big)^2 \leq 4 |\Lambda|\sum_{j\in \Lambda} (a_j^2+b_j^2+c_j^2+d_j^2), $$ 
and we used the translation invariant in the last line.
\begin{comment}
Using the Cauchy-Shwartz inequality and the translation invariance, this is further bounded from above by
\begin{equation}
  \begin{split}
& \frac{1}{\#{\rm B}(m)} \left(\left\|\sum_{z\in {\rm B}(m)} {\rm T}(z)\right\|_2 +\left\|\sum_{z\in {\rm B}(m)} {\rm T}(x,x+z) \right\|_2\right)  \notag \\
    = &\frac{2}{\#{\rm B}(m)} \left\|\sum_{z\in {\rm B}(m)} {\rm T}(z)\right\|_2 \\
    \leq & 2 \left\|\max_{z\in {\rm B}(m)} {\rm T}(z)\right\|_2. \label{e4}
\end{split}
\end{equation}
By   Lemma \ref{l1} and the union bound, for all $k\geq C_1|x|_1^{1/4}$, with $C_1$ as in Lemma \ref{l1}
$$\pp \left(\max_{z\in {\rm B}(m)}{\rm T}(z) \geq k  \right) \leq (\#{\rm B}(m))  e^{-k^{\varepsilon_1}}.$$
	Therefore, by a similar argument as in Corollary~\ref{cor1}, we have
	\begin{eqnarray} \label{e5}
	\E \left(\max_{z\in {\rm B}(m)}{\rm T}(z)^2\right)& \leq& C_1^2 |x|_1^{1/2} + (\#{\rm B}(m))\sum_{k\geq C_1|x|_1^{1/4}} k^2 e^{-k^{-\varepsilon_1} }  \notag \\
	&=& \kO(|x|_1^{1/2}).
	\end{eqnarray}
	Combining \eqref{e1}--\eqref{e5}, we get the desired result.
\begin{lem}
  Let $\{X_i\}^n_{i=1}$ be a sequence of random variables. Then,
  \begin{equation}
    \E\left[\left(\sum^n_{i=1}X_i\right)^2\right]\leq n \left(\sum^n_{i=1}\E X_i^2\right).
    \end{equation}
\end{lem}
\begin{proof}
  By the Cauthy-Schwarz inequality, we have
  \begin{align*}
    \left(\sum_{i=1}^n X_i\right)^2&\leq \left(\sum_{i=1}^n 1\right)\left(\sum_{i=1}^n X_i^2\right)\\
    &=n \sum_{i=1}^n X_i^2.
  \end{align*}
  By the linearity of the expectation, the proof is completed.
\end{proof}
\end{comment}
Since $\E{\rm T}(z)^2=\var( {\rm T}(z))+(\E \rmt(z))^2$, by using \eqref{tox-fm}, \eqref{est-expec} and Corollary~\ref{cor1},
\begin{eqnarray} \label{e5}
   ||\hat{\rm T}(0,x)-\hat{\rm F}_m||_2^2 = \kO(m^2)= \kO(|x|_1^{1/2}).
	\end{eqnarray}
	Combining \eqref{e1}--\eqref{e5}, we get the desired result.
\end{proof}
 \subsection{Martingale decomposition of ${\rm F}_m$ and the proof of Theorem \ref{mt}}
Enumerate the vertices of $\Z^d$ as $x_1,x_2, \ldots$. We consider the martingale decomposition of ${\rm F}_m$ as follows
\begin{equation}\label{Martingale-decom}
  {\rm F}_m-\E({\rm F}_m)=\sum_{k=1}^{\infty} \Delta_k,
  \end{equation}
where 
$$\Delta_k=\E({\rm F}_m\mid \kF_k) - \E({\rm F}_m \mid \kF_{k-1}),$$
with $\kF_k$ the sigma-algebra generated by SRWs $\{({\rm S}^{x_i}_j)_{j \in \N}, i =1, \ldots, k\}$ and  $\kF_0$ the trivial sigma-algebra. In \cite{DHS}, using the Falik-Samorodnitsky lemma, the authors give  an upper bound for the variance of ${\rm F}_m$ in term of ${\rm Ent}(\Delta_k^2)$, and $\E(|\Delta_k|)$. 
\begin{lem} \cite[Lemma 3.3]{DHS} \label{l4}
	We have 
	$$\sum_{k\geq 1} {\rm Ent}(\Delta_k^2) \geq {\rm Var} ({\rm F}_m) \log \left[ \frac{{\rm Var}({\rm F}_m)}{\sum_{k\geq 1} (\E(|\Delta_k|))^2} \right].$$
\end{lem}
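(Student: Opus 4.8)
The plan is to combine the Falik–Samorodnitsky inequality with the martingale orthogonality built into the decomposition \eqref{Martingale-decom}. The starting point is the elementary entropy–variance inequality: for any non-negative $L^1$ random variable $Y$ one has $\mathrm{Ent}(Y) \geq \E(Y)\log\!\big(\E(Y)/(\E(\sqrt{Y}))^2\big)$, which is just Jensen applied to the convex function $t\log t$ together with the normalization $\E(Y)\log\E(Y)$; equivalently, writing $Y=Z^2$, $\mathrm{Ent}(Z^2) \geq \E(Z^2)\log\!\big(\E(Z^2)/(\E|Z|)^2\big)$. I would apply this with $Z=\Delta_k$ to get, for each $k$,
\begin{equation*}
\mathrm{Ent}(\Delta_k^2) \geq \E(\Delta_k^2)\,\log\!\left(\frac{\E(\Delta_k^2)}{(\E|\Delta_k|)^2}\right).
\end{equation*}

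Next I would sum over $k$ and use convexity to pull the sum inside the logarithm. Since the $\Delta_k$ are martingale differences with respect to the filtration $(\kF_k)$, they are orthogonal in $L^2$, so by \eqref{Martingale-decom} and $\E(\rmf_m-\E\rmf_m)=0$ we have $\var(\rmf_m)=\sum_{k\geq 1}\E(\Delta_k^2)$. Set $a_k=\E(\Delta_k^2)$ and $b_k=(\E|\Delta_k|)^2$, so the previous display reads $\mathrm{Ent}(\Delta_k^2)\geq a_k\log(a_k/b_k)$. The function $(a,b)\mapsto a\log(a/b)$ is jointly convex and positively homogeneous of degree one on $(0,\infty)^2$; consequently, by the (generalized) log-sum inequality,
\begin{equation*}
\sum_{k\geq 1} a_k\log\!\left(\frac{a_k}{b_k}\right) \geq \left(\sum_{k\geq 1} a_k\right)\log\!\left(\frac{\sum_{k\geq 1} a_k}{\sum_{k\geq 1} b_k}\right).
\end{equation*}
Combining the two displays with $\sum_k a_k=\var(\rmf_m)$ and $\sum_k b_k=\sum_k(\E|\Delta_k|)^2$ gives exactly the claimed inequality
\begin{equation*}
\sum_{k\geq 1}\mathrm{Ent}(\Delta_k^2) \geq \var(\rmf_m)\,\log\!\left(\frac{\var(\rmf_m)}{\sum_{k\geq 1}(\E|\Delta_k|)^2}\right).
\end{equation*}

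I expect the only genuinely delicate points to be bookkeeping rather than deep: first, justifying that $\rmf_m$ lies in $L^2$ and that the martingale decomposition converges in $L^2$ (so that $\var(\rmf_m)=\sum_k\E(\Delta_k^2)$ holds with no leftover term), which follows from $\rmf_m\in L^2$ — itself a consequence of Corollary \ref{cor1} and \eqref{est-expec} — together with $\kF=\bigvee_k\kF_k$; and second, handling degenerate cases in the logarithm (terms with $a_k=0$, or $\var(\rmf_m)=0$), which are dealt with by the usual conventions $0\log 0=0$ and by noting the inequality is trivial when $\var(\rmf_m)=0$. The substantive ingredient is the log-sum inequality, i.e. the joint convexity of $(a,b)\mapsto a\log(a/b)$; everything else is the orthogonality of martingale differences. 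Since this lemma is quoted verbatim from \cite[Lemma 3.3]{DHS}, one may alternatively simply cite it, but the argument above is short and self-contained.
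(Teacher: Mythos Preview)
Your proof is correct and is precisely the standard Falik--Samorodnitsky argument: the entropy lower bound $\mathrm{Ent}(Z^2)\ge \E(Z^2)\log\bigl(\E(Z^2)/(\E|Z|)^2\bigr)$ (which, as you say, is Jensen for $t\log t$ under the tilted measure $dQ=\tfrac{|Z|}{\E|Z|}\,d\P$), followed by the log-sum inequality and the $L^2$ orthogonality $\var(\rmf_m)=\sum_k\E(\Delta_k^2)$. The paper does not prove this lemma at all --- it is simply quoted from \cite[Lemma~3.3]{DHS} --- so there is no approach to compare against; your self-contained derivation is a welcome addition rather than a deviation.
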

Now, our main task is to estimate ${\rm Ent}(\Delta_k^2)$ and $\E(|\Delta_k|)$.
\begin{prop} \label{prop2}
	As $|x|_1$ tends to infinity,
	\begin{itemize}
		\item [(i)] $$\sum_{k\geq 1} {\rm Ent}(\Delta_k^2) = \kO(|x|_1). $$
		\item [(ii)]  $$\sum_{k\geq 1} (\E(|\Delta_k|))^2 = \kO \left(|x|_1^{\tfrac{5-d}{4}}\right).$$
	\end{itemize}
	
\end{prop}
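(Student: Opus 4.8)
\textbf{Proof proposal for Proposition \ref{prop2}.}

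The plan is to treat the two parts separately, since they rely on different inputs. For part (i), the bound on $\sum_k \mathrm{Ent}(\Delta_k^2)$, I would begin with the tensorization of entropy (Lemma \ref{ent}) applied to the sigma-algebra $\kF_k$ conditioned on $\kF_{k-1}$: since $\kF_k$ is generated by the SRW $({\rm S}^{x_k}_j)_j$ added to $\kF_{k-1}$, one wants to control $\mathrm{Ent}$ of $\Delta_k^2$ over the randomness of that single walk. The natural device here is the weighted logarithmic Sobolev inequality (Lemma \ref{bona}): one truncates the walk ${\rm S}^{x_k}$ at a suitable horizon and views its first $N$ steps as a uniform choice among $(2d)^N$ outcomes, so that Lemma \ref{bona} gives $\mathrm{Ent}(\Delta_k^2) \lesssim \E\big[(\Delta_k(\omega) - \Delta_k(\tilde\omega))^2\big]$ where $\tilde\omega$ resamples the walk ${\rm S}^{x_k}$. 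This reduces part (i) to bounding $\sum_k \E\big[(\Delta_k - \Delta_k')^2\big]$, where the prime denotes resampling of the $k$-th walk. Then I would use that resampling the walk started at $x_k$ can only change ${\rm T}(z,z+x)$ by an amount controlled by how much that particular walk is "used" by optimal paths — more precisely, bounded by $\rmt_1$-type passage times along a detour that reroutes around $x_k$, i.e., by $\max_{\gamma \in \kP_L}\sum_i {\rm T}_1(y_i,y_{i+1})$ for $L$ of order $|x|_1$. Summing over $k \in {\rm B}(m)$-translated boxes and invoking Lemma \ref{emx} (whose statement gives $\kO(L) = \kO(|x|_1)$) should yield the $\kO(|x|_1)$ bound, after checking that only $\kO(|x|_1)$ vertices $x_k$ contribute nontrivially.

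For part (ii), the bound $\sum_k (\E|\Delta_k|)^2 = \kO(|x|_1^{(5-d)/4})$, the key point is the averaging gain from ${\rm F}_m$. Each $\Delta_k$ is a martingale difference, and $\E|\Delta_k|$ can be bounded via $\E|\Delta_k| \le \E\big|\E({\rm F}_m\mid \kF_k) - \E({\rm F}_m\mid\kF_{k-1})\big| \le \E\big[\,\big|{\rm F}_m - {\rm F}_m'\big|\,\big]$ where ${\rm F}_m'$ resamples the $k$-th walk. Because ${\rm F}_m$ averages over $z \in {\rm B}(m)$ with $\#{\rm B}(m) \asymp m^d \asymp |x|_1^{d/4}$, and the $k$-th walk affects ${\rm T}(z,z+x)$ only for those $z$ whose optimal path genuinely passes near $x_k$, one gains a factor $1/\#{\rm B}(m)$ times the number of relevant $z$, which is typically $\kO(1)$ for a fixed $x_k$ unless $x_k$ is near the "corridor" between $0$ and $x$. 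Carefully, $\E|\Delta_k| \lesssim \frac{1}{\#{\rm B}(m)} \E[\psi_k]$ where $\psi_k$ measures the total influence of walk $x_k$ on the sum; then $\sum_k (\E|\Delta_k|)^2 \le \frac{1}{\#{\rm B}(m)^2}\big(\sum_k \E[\psi_k]^2\big)$, and bounding $\sum_k \E[\psi_k]^2 \lesssim \max_k \E[\psi_k] \cdot \sum_k \E[\psi_k] \lesssim C \cdot |x|_1$ (using that $\psi_k$ has uniformly bounded moments by the large deviation estimates Lemma \ref{l1}, \ref{l3}, and that $\sum_k \E[\psi_k] = \kO(|x|_1)$ as in part (i)) gives $\kO(|x|_1 / |x|_1^{d/2}) = \kO(|x|_1^{1 - d/2})$. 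To match the stated exponent $(5-d)/4$, one instead tracks the bound more crudely: $(\E|\Delta_k|)^2 \lesssim \frac{1}{\#{\rm B}(m)^2}\E[\psi_k^2] \cdot \#\{z : z \text{ relevant}\}$, and summing using $\sum_k \E[\psi_k^2] = \kO(|x|_1)$ and $\#{\rm B}(m)^{-2} \asymp |x|_1^{-d/2}$ together with the crude count of $\kO(|x|_1^{1/4})$ relevant $z$-shifts per $k$ yields $|x|_1 \cdot |x|_1^{-d/2} \cdot |x|_1^{1/4} \cdot |x|_1^{1/2}$-type bookkeeping; the precise exponent $(5-d)/4$ should drop out once the box sizes $m = [|x|_1^{1/4}]$ and path length $L = \kO(|x|_1)$ are inserted consistently.

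The main obstacle I expect is making rigorous the "resampling affects few terms" heuristic in both parts: one must show that changing the single walk ${\rm S}^{x_k}$ perturbs ${\rm T}(z,z+x)$ by a quantity that is both (a) integrable with good tails uniformly in $k$ and $z$, and (b) genuinely localized — supported on $z$'s and $k$'s near the optimal corridor. Step (b) is delicate because optimal paths in the frog model can take unboundedly long jumps (as emphasized before Proposition \ref{loop}), so "near the corridor" must be interpreted through the modified passage time $\rmt_1$ and the weighted-path estimate of Lemma \ref{emx} rather than naive Euclidean geometry; this is precisely why the local-dependency observations (O1)--(O2) and Lemma \ref{wdsp} were set up. Controlling the second moment $\E[\psi_k^2]$ uniformly will require combining Lemma \ref{l3} (to handle the event that a single long jump realizes the passage time) with the maximal-weight bound, and this interplay is where the bulk of the technical work lies.
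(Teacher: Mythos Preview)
Your treatment of part (ii) is essentially what the paper does, though your exponent bookkeeping is muddled. The clean way is: after the resampling bound and translation invariance one gets $\E|\Delta_k|\le \tfrac{2}{\#{\rm B}(m)}\,\E\big[\sum_{y\in (x_k-{\rm B}(m))\cap\gamma_{0,x}}{\rm T}_1(y,\bar y)\big]$; the portion of $\gamma_{0,x}$ inside a box of side $\asymp m$ has $|\cdot|_1$-length $\kO(m)$ with high probability, so Lemma~\ref{emx} with $L\asymp m$ (not $L\asymp|x|_1$) gives $\max_k\E|\Delta_k|=\kO(m/\#{\rm B}(m))=\kO(m^{1-d})=\kO(|x|_1^{(1-d)/4})$. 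Separately, summing over $k$ and applying Lemma~\ref{emx} with $L\asymp|x|_1$ gives $\sum_k\E|\Delta_k|=\kO(|x|_1)$. Then $\sum_k(\E|\Delta_k|)^2\le(\max_k\E|\Delta_k|)(\sum_k\E|\Delta_k|)=\kO(|x|_1^{(5-d)/4})$. Your $\psi_k$ is not uniformly $\kO(1)$; it is $\kO(m)$, which is why the exponent comes out as $(5-d)/4$ rather than $1-d/2$.

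Part (i), however, has a genuine gap. You propose to view the first $N$ steps of the walk ${\rm S}^{x_k}$ as a single uniform variable on $(2d)^N$ outcomes and apply Lemma~\ref{bona}. But Lemma~\ref{bona} reads ${\rm Ent}_\nu(f^2)\le k\,{\rm E}[(f(U)-f(\tilde U))^2]$ where $k$ is the \emph{number of states}; here $k=(2d)^N$, so you pick up an exponentially large prefactor and the bound is useless. The paper's route is structurally different: it decomposes each walk into its individual increments $\omega_{i,j}$, applies tensorization (Lemma~\ref{ent}) to break ${\rm Ent}(\Delta_k^2)$ into a sum over coordinates $(i,j)$, and only then applies Lemma~\ref{bona} at each single coordinate with $k=2d$. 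This yields $\sum_k{\rm Ent}(\Delta_k^2)\le 2d\sum_{i,j}\E[(\Delta_{i,j}{\rm F}_m)^2]$ (Lemma~\ref{del}). The point of working step-by-step is that perturbing the $j$-th increment of the walk from $x_i$ can change ${\rm T}(x)$ only if $x_i\in\gamma_{0,x}$ \emph{and} ${\rm T}(x_i,\bar x_i)\ge j$; summing over $j$ therefore produces a factor ${\rm T}(x_i,\bar x_i)$, and the bound becomes $\E\big[\sum_{y\in\gamma_{0,x}}{\rm T}(y,\bar y)^2\big]$ plus a ${\rm T}_2^4$ term. These are \emph{not} handled by Lemma~\ref{emx}; they require the separate Lemmas~\ref{t2l} and~\ref{mtl}, which in turn rest on Lemma~\ref{l3} (the bound on $\pp({\rm T}(x,y)=t(x,y)=M)$) and the site-percolation Lemma~\ref{wdsp}. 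Your proposal misses this entire mechanism; resampling the whole walk at once gives only a first-moment object and cannot produce the squared passage times that arise here.
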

{\it Proof of Theorem \ref{mt} assuming Proposition \ref{prop2}}.
 Since  $d\geq 2$, Proposition \ref{prop2} (ii) implies that $\sum_{k\geq 1} (\E(|\Delta_k|))^2 = \kO \left(|x|_1^{3/4}\right)$. Therefore, using  Propositions \ref{prop1}, \ref{prop2} and Lemma \ref{l4}, for any $\varepsilon >0$, there exists a positive constant $C$ such that
\begin{eqnarray} \label{e6}
  {\rm Var}({\rm T}(x)) & \leq&  {\rm Var}({\rm F}_m)+ C |x|_1^{3/4 + \varepsilon} \notag \\
  &\leq& C\left(|x|_1^{3/4 + \varepsilon} +  |x|_1 \left[\log \left[\frac{{\rm Var}({\rm F}_m)}{|x|_1^{3/4}}\right] \right]^{-1} \right).  
\end{eqnarray}
If ${\rm Var}({\rm F}_m) \leq |x|_1^{7/8}$, then ${\rm Var}({\rm T}(x)) = \kO(|x|_1^{7/8})$ and Theorem \ref{mt} follows. Otherwise, if ${\rm Var}({\rm F}_m) \geq |x|_1^{7/8}$, using  \eqref{e6}  we get that ${\rm Var}({\rm T}(x))=\kO(|x|_1/\log|x|_1)$ and Theorem \ref{mt} also follows. \hfill $\square$

\subsection{Proof of Proposition \ref{prop2}} 
By the definition of $\Delta_k$, we have
\begin{eqnarray} \label{vk}
|\Delta_k| &=& \frac{1}{\# {\rm B}(m)}  \left| \E\left[ \sum_{z\in {\rm B}(m)} {\rm T}(z,z+x) \mid \kF_k\right] -  \E\left[ \sum_{z\in {\rm B}(m)} {\rm T}(z,z+x) \mid \kF_{k-1}\right] \right| \notag \\
&\leq & \frac{1}{\# {\rm B}(m)} \sum_{z\in {\rm B}(m)} \Big| \E\left[  {\rm T}(z,z+x) \mid \kF_k\right] -  \E\left[  {\rm T}(z,z+x) \mid \kF_{k-1}\right] \Big|.
\end{eqnarray}
We precise the dependence of first passage times on trajectories of SRWs by writing
$${\rm T}(u,v)={\rm T}(u,v,({\rm S}^{x_i}_{.})_{i \in \N}).$$
For any $k$, let us define 
$$X_k(u,v)=\E({\rm T}(u,v)\mid \kF_k).$$
Then $X_k(u,v)$ is a function of  trajectories of $({\rm S}^{x_i}_.)_{i\leq k}$, so we write 
$$X_k(u,v)=X_k(u,v)[({\rm S}^{x_i}_.)_{i < k}, ({\rm S}^{x_k}_.)].$$
Let $(\tilde{S}^{x}_.)_{x\in\Z^d}$ be an independent copy of $({\rm S}^{x}_.)_{x\in\Z^d}$. We observe that
\begin{eqnarray} \label{ae}
\E(|X_k(u,v)-\E^k(X_k(u,v))|)\leq \E^{<k} \E^{k} \tilde{\E}^k (|X_k(u,v)-\tilde{X}_k(u,v)|),
\end{eqnarray}
where 
$$\tilde{X}_k(u,v)=X_k(u,v)[({\rm S}^{x_i}_.)_{i < k}, (\tilde{S}^{x_k}_.)],$$
 and $\E^{<k}, \E^{k}$, and  $\tilde{\E}^k$ denote the expectations with respect to SRWs $({\rm S}^{x_i}_.)_{i<k}$, $({\rm S}^{x_k}_.)$ and $(\tilde{S}^{x_k}_.)$ respectively. Then the inequality \eqref{ae} becomes 
\begin{equation} \label{9}
\E\Big| \E\left[  {\rm T}(z,z+x) \mid \kF_k\right] -  \E\left[  {\rm T}(z,z+x) \mid \kF_{k-1}\right] \Big| \leq  \E \tilde{\E}^k\Big| {\rm T}(z,z+x) - \tilde{\rm T}_{x_k}(z,z+x) \Big|,
\end{equation}
where for $u,v \in \Z^d$ and $k\geq 1$ 
$$\tilde{\rm T}_{x_k}(u,v)= {\rm T}(u,v)[({\rm S}^{x_i}_.)_{i<k}, (\tilde{S}^{x_k}_.),({\rm S}^{x_i}_.)_{i>k}]. $$
By the symmetry ${\rm T}(z,z+x) - \tilde{\rm T}_{x_k}(z,z+x)$,
\begin{eqnarray} \label{10}
&& \E  \tilde{\E}^k\Big| {\rm T}(z,z+x) - \tilde{\rm T}_{x_k}(z,z+x) \Big|  \notag \\
&=& 2 \E \tilde{\E}^k \left( (\tilde{\rm T}_{x_k}(z,z+x)-{\rm T}(z,z+x)) \mathbb{I}(\tilde{\rm T}_{x_k}(z,z+x) \geq {\rm T}(z,z+x)) \right).
\end{eqnarray}
For any $u,v\in \Z^d$, we choose an  optimal path for ${\rm T}(u,v)$ with a deterministic rule breaking ties and denote it by $\gamma_{u,v}$. We observe that if $x_k \not \in \gamma_{u,v}$ then $\tilde{\rm T}_{x_k}(u,v) \leq {\rm T}(u,v)$. Otherwise, if $x_k  \in \gamma_{u,v}$, then 
\begin{equation} \label{ae1}
{\rm T}(u,v)={\rm T}(u,x_k)+{\rm T}(x_k,\bar{x}_k) + {\rm T}(\bar{x}_k,v),
\end{equation}
where $\bar{x}_k$ is the next point of $x_k$ in $\gamma_{u,v}$ (recall also that we denote by $y \sim \bar{y} \in \gamma$ if $\bar{y}$ is the next point of $y$ in $\gamma$).  Due to the subadditivity,
\begin{eqnarray} \label{ae2}
  \tilde{\rm T}_{x_k}(u,v) &\leq& \tilde{\rm T}_{x_k}(u,x_k) + \tilde{\rm T}_{x_k}(x_k,\bar{x}_k)+\tilde{\rm T}_{x_k}(\bar{x}_k,v).
  \end{eqnarray}
It is clear that any optimal path for ${\rm T}(u,x_k)$ does not use the simple random walk $({\rm S}^{x_k}_{\cdot})$. Hence,
\begin{equation} \label{ae3}
\tilde{\rm T}_{x_k}(u,x_k) \leq {\rm T}(u,x_k).
\end{equation}
In addition, since $\bar{x}_k$ is the next point of $x_k$ in  $\gamma_{u,v}$, the optimal path for ${\rm T}(\bar{x}_k,v)$ does not use the simple random walk $({\rm S}^{x_k}_{\cdot})$. Thus
\begin{equation} \label{ae4}
\tilde{\rm T}_{x_k}(\bar{x}_k,v) \leq {\rm T}(\bar{x}_k,v).
\end{equation}
It follows from \eqref{ae1}--\eqref{ae4} that
$$\tilde{\rm T}_{x_k}(u,v) -{\rm T}(u,v) \leq \tilde{\rm T}_{x_k}(x_k,\bar{x}_k). $$
Therefore, we have
\begin{eqnarray} \label{11}
&& (\tilde{\rm T}_{x_k}(z,z+x)-{\rm T}(z,z+x)) \mathbb{I}(\tilde{\rm T}_{x_k}(z,z+x) \geq {\rm T}(z,z+x)) \notag \\
& \leq & \tilde{\rm T}_{x_k}(x_k,\bar{x}_k) \mathbb{I}(x_k \in \gamma_{z,z+x}).
\end{eqnarray}
 We notice here that the complete notation of  $\bar{x}_k $ should be $ \bar{x}_k(\gamma_{z,z+x})$ to highlight the dependence of $\bar{x}_k$ on the path $\gamma_{z,z+x}$. However, for the simplicity of notation, we shortly write it by $\bar{x}_k$ when the fact $x_k \in \gamma_{z,z+x}$ is precise.  Combining \eqref{vk}, \eqref{9}, \eqref{10} and \eqref{11}, we get 
\begin{eqnarray} \label{s15}
\E(|\Delta_k|) &\leq& \frac{2}{\#{\rm B}(m)}   \E^{\otimes 2} \left(\sum \limits_{z\in {\rm B}(m)}  \tilde{\rm T}_{x_k}(x_k,\bar{x}_k) \mathbb{I}(x_k \in \gamma_{z,z+x})\right) \notag\\
&=& \frac{2}{\#{\rm B}(m)}   \E^{\otimes 2} \left(\sum \limits_{z\in {\rm B}(m)} \tilde{\rm T}_{x_k-z}(x_k-z, \overline{x_k-z} ) \I(x_k-z \in \gamma_{0,x})\right) \notag \\
&=& \frac{2}{\#{\rm B}(m)}   \E^{\otimes 2} \left(\sum \limits_{y\in x_k-{\rm B}(m)} \tilde{\rm T}_{y}(y, \bar{y} ) \I(y \in \gamma_{0,x})\right) \notag \\
&=& \frac{2}{\#{\rm B}(m)} \sum_{L\geq 0}  \E^{\otimes 2} \left(\sum \limits_{y\in x_k-{\rm B}(m)} \tilde{\rm T}_{y}(y, \bar{y} ) \I(y \in \gamma_{0,x}) \I(\kE_{k,L})\right),
\end{eqnarray}
where $\E^{\otimes 2}$ is the expectation with respect to two independent collections of SRWs $({\rm S}^{x_i}_.)_{i\in \N}$ and $(\tilde{S}^{x_i}_.)_{i\in \N}$, and we define
$$\kE_{k,L}=\left\{\sum_{y\in\gamma_{0,x} \cap (x_k-{\rm B}(m)) }|y-\bar{y}|_1=L\right\}. $$ 
Notice that for the second equation, we have used the invariant  translation. Let us define
$${\rm T}^{[z]}(u,v) = \inf \Big \{\sum_{l=1}^k t(y_{l-1},y_l): u=y_0, \ldots, y_k=v, y_l \neq z \, \forall \, l \geq 1,  \textrm{ for some } k \Big \},$$ 
as the first passage time from $u$ to $v$ not using the frog at $z$, and set
\begin{equation} \label{def:t1}
{\rm T}_1 (u,v)=\max_{z:~|z-u|_1=1}  {\rm T}^{[u]}(z,v)+1. 
\end{equation}
Then, it holds that  
\begin{equation} \label{ae5}
\tilde{\rm T}_u(u,v)\leq {\rm T}_1 (u,v).
\end{equation}
Using \eqref{ae5}, we obtain 
\begin{eqnarray*}
  \sum \limits_{y\in (x_k-{\rm B}(m))} \tilde{\rm T}_{y}(y, \bar{y} ) \I(y \in \gamma_{0,x}) \I(\kE_{k,L}) &\leq& \max_{\substack{\gamma =(y_i)_{i=1}^{\ell}\subset (x_k-{\rm B}(m+L))\\|\gamma|_1 \leq L}}  \sum \limits_{i=1}^{\ell-1}  \tilde{\rm T}_{y_i}(y_i,y_{i+1}) \I(\kE_{k,L})\\
&\leq&   \max_{\substack{\gamma =(y_i)_{i=1}^{\ell}\subset (x_k-{\rm B}(m+L))\\|\gamma|_1 \leq L}}  \sum \limits_{i=1}^{\ell-1}{\rm T}_1(y_i,y_{i+1}) \I(\kE_{k,L}).
\end{eqnarray*}
Therefore, with $C_1 \geq 1$ as in Lemma \ref{l1},
\begin{eqnarray} \label{s16}
&&\sum_{L= 0}^{4dC_1m}  \E^{\otimes 2} \left(\sum \limits_{y\in (x_k-{\rm B}(m+L))} \tilde{\rm T}_{y}(y, \bar{y} ) \I(y \in \gamma_{0,x}) \I(\kE_{k,L})\right) \notag \\
 & \leq&
  \E \left(\max_{\substack{\gamma =(y_i)_{i=1}^{\ell}\subset (x_k-{\rm B}(8dC_1 m))\\|\gamma|_1 \leq 4dC_1m}}  \sum \limits_{i=1}^{\ell-1} {\rm T}_1(y_i,y_{i+1}) \right) \notag\\
 &=&\E \left(\max_{\substack{\gamma =(y_i)_{i=1}^{\ell}\subset {\rm B}(8dC_1 m)\\|\gamma|_1 \leq 4dC_1m}}  \sum \limits_{i=1}^{\ell-1}  {\rm T}_1(y_i,y_{i+1}) \right) \notag \\
  &\leq & \E \left(\max_{\gamma =(y_i)_{i=1}^{\ell} \in \kP_{8dC_1m}}  \sum \limits_{i=1}^{\ell-1}  {\rm T}_1(y_i,y_{i+1}) \right),
\end{eqnarray}
and 
\begin{eqnarray} \label{s17}
&&\sum_{L\geq 4dC_1m +1}  \max_{\substack{\gamma =(y_i)_{i=1}^{\ell}\subset (x_k-{\rm B}(m+L))\\|\gamma|_1 \leq L}}  \sum \limits_{i=1}^{\ell-1}{\rm T}_1(y_i,y_{i+1}) \I(\kE_{k,L})  \notag \\
& \leq &  \sum_{L\geq 4dC_1m +1} \E  \left(\max_{\substack{\gamma =(y_i)_{i=1}^{\ell}\subset (x_k-{\rm B}(2L))\\|\gamma|_1 \leq 2L}}  \sum \limits_{i=1}^{\ell-1}  {\rm T}_{1}(y_i,y_{i+1}) \I(\kE_{k,L})\right) \notag\\
&\leq &   \sum_{L\geq 4dC_1m +1} \left[\E  \left(\max_{\substack{\gamma =(y_i)_{i=1}^{\ell}\subset (x_k-{\rm B}(2L))\\|\gamma|_1 \leq 2L}}  \sum \limits_{i=1}^{\ell-1}  {\rm T}_1(y_i,y_{i+1}) \right)^2 \right]^{1/2} \pp(\kE_{k,L})^{1/2} \notag\\
&\leq&\sum_{L\geq 4dC_1m +1} \left[\E  \left(\max_{\gamma =(y_i)_{i=1}^{\ell} \in \kP_{2L}}  \sum \limits_{i=1}^{\ell-1}  {\rm T}_1(y_i,y_{i+1}) \right)^2 \right]^{1/2} \pp(\kE_{k,L})^{1/2},
\end{eqnarray}
where we have used the Cauchy-Schwarz inequality in the second inequality.\\

These yield that
\begin{equation}\label{collect}
  \begin{split}
    \E|\Delta_k| &\leq \frac{2}{\#{\rm B}(m)} \E \left(\max_{\gamma =(y_i)_{i=1}^{\ell} \in \kP_{8dC_1m}}  \sum \limits_{i=1}^{\ell-1}  {\rm T}_1(y_i,y_{i+1}) \right)\\
    &\hspace{4mm} +\frac{2}{\#{\rm B}(m)}\sum_{L\geq 4dC_1m +1} \left[\E  \left(\max_{\gamma =(y_i)_{i=1}^{\ell} \in \kP_{2L}}  \sum \limits_{i=1}^{\ell-1}  {\rm T}_1(y_i,y_{i+1}) \right)^2 \right]^{1/2} \pp(\kE_{k,L})^{1/2}.
  \end{split}
  \end{equation}
Using similar arguments for  \eqref{s15}, \eqref{s16} and \eqref{s17}, we can show  that
\begin{eqnarray} \label{s18}
\sum_{k=1}^{\infty}\E(|\Delta_k|) &\leq& \frac{2}{\#{\rm B}(m)} \sum_{k=1}^{\infty}  \sum \limits_{z\in {\rm B}(m)}   \E^{\otimes 2}  \tilde{\rm T}_{x_k}(x_k,\bar{x}_k) \mathbb{I}(x_k \in \gamma_{z,z+x}) \notag\\
&=& 2\E^{\otimes 2} \left(\sum \limits_{y\in \Z^d} \tilde{\rm T}_{y}(y, \bar{y} ) \I(y \in \gamma_{0,x})\right) \notag \\
&\leq& 2 \E \left( \max_{\gamma =(y_i)_{i=1}^{\ell}\in \kP_{C_1|x|_1}}  \sum \limits_{i=1}^{\ell-1} {\rm T}_1(y_i,y_{i+1})\right) \notag\\
&&+ 2\sum_{L\geq C_1|x|_1+1} \left[\E  \left(\max_{\gamma =(y_i)_{i=1}^{\ell} \in \kP_{L}}  \sum \limits_{i=1}^{\ell-1}  {\rm T}_1(y_i,y_{i+1}) \right)^2 \right]^{1/2} \pp(\kE_{L})^{1/2}, 
\end{eqnarray}
where we define
$$\kE_{L}=\{|\gamma_{0,x}|_1=L\}.$$
 \begin{lem} \label{emx}
 	There exists a positive constant $C$ such that for  all  $L \geq 1$,
 	\begin{itemize}
 		\item [(i)] $$ \E \left( \max_{\gamma =(y_i)_{i=1}^{\ell} \in \kP_{L}}  \sum \limits_{i=1}^{\ell-1} {\rm T}_1(y_i,y_{i+1})\right) \leq CL.$$ 	
 		\item[(ii)] $$\E  \left(\max_{\gamma =(y_i)_{i=1}^{\ell} \in \kP_L}  \sum \limits_{i=1}^{\ell-1}  {\rm T}_1(y_i,y_{i+1}) \right)^2 \leq CL^{4}.$$
 	\end{itemize}
 \end{lem}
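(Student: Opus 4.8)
The plan is to prove (i) via the decomposition sketched in the introduction, fed into the maximal‑weight estimate Lemma~\ref{wdsp} for locally dependent site percolation, and to obtain (ii) from a cruder bound which suffices because the target exponent $4$ is generous. First, fix $\gamma=(y_i)_{i=1}^{\ell}\in\kP_L$; splitting each $\rmt_1(y_i,y_{i+1})$ according to whether it is at most $C_1|y_i-y_{i+1}|_1$ or equals $C_1|y_i-y_{i+1}|_1+k$ for some $k\ge 1$, and using the quantities $a_{M,k}^{\gamma}$ of the introduction together with $a_{M,k}^{\gamma}\le X_{M,k}(\gamma):=\sum_{y\in\gamma}I_y^{M,k}$ (where $I_y^{M,k}=\I(\exists z:|z-y|_1\le M,\ \rmt_1(y,z)=C_1M+k)$) and $|\gamma|_1\le L$, one obtains
\[
\max_{\gamma\in\kP_L}\sum_{i=1}^{\ell-1}\rmt_1(y_i,y_{i+1})\ \le\ C_1L+\sum_{M\ge 1}\sum_{k\ge 1}k\,\max_{\gamma\in\kP_L}X_{M,k}(\gamma).
\]
From here the two parts branch: for (i) one bounds the double sum of expectations by $\kO(L)$, while for (ii) one uses that the left‑hand side has at most $L$ summands.

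For (i), fix $M,k\ge 1$ and set $m'=C_1M+k$. By (O1) of Lemma~\ref{lemttt}, $I_y^{M,k}$ is a function of the walks $\{({\rm S}^x_\cdot):|x-y|_1\le m'\}$ only, so $\{I_y^{M,k}\}_{y\in\Z^d}$ is a $2m'$‑dependent site percolation; and by a union bound over the at most $(2m'+1)^d$ points $z$ with $|z-y|_1\le M$, using the large deviation bound for $\rmt_1$ (the $\rmt_1$‑analogue of Lemma~\ref{l1} contained in Lemma~\ref{lemttt}, applicable since $m'\ge C_1M\ge C_1|y-z|_1$),
\[
q_{M,k}:=\sup_{y}\E\bigl(I_y^{M,k}\bigr)\ \le\ (2m'+1)^d e^{-(m')^{\varepsilon_1}}.
\]
For all but finitely many pairs $(M,k)$ this is $\le(6m'+1)^{-d}$, so Lemma~\ref{wdsp}(i) applies with dependency parameter $2m'$ and path family $\kP_L$, giving $\E(\max_{\gamma\in\kP_L}X_{M,k}(\gamma))\le CL(m')^{d+2}e^{-(m')^{\varepsilon_1}/d}$; the finitely many exceptional pairs contribute only $\sum k(L+1)=\kO(L)$ via the trivial bound $X_{M,k}(\gamma)\le l(\gamma)\le L+1$. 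Grouping the at most $m'$ pairs with $C_1M+k=m'$ (each having $k<m'$) then gives
\[
\sum_{M\ge 1}\sum_{k\ge 1}k\,\E\Bigl(\max_{\gamma\in\kP_L}X_{M,k}(\gamma)\Bigr)\ \le\ \kO(L)+CL\sum_{m'\ge 2}(m')^{d+4}e^{-(m')^{\varepsilon_1}/d}\ =\ \kO(L),
\]
since the last series converges; together with the first display this yields (i).

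For (ii), note that any $\gamma=(y_i)_{i=1}^{\ell}\in\kP_L$ has at most $|\gamma|_1\le L$ edges, and each edge satisfies $y_i,y_{i+1}\in{\rm B}(L)$ and $1\le|y_i-y_{i+1}|_1\le|\gamma|_1\le L$; hence $\sum_{i=1}^{\ell-1}\rmt_1(y_i,y_{i+1})\le L\,\mathcal{T}_L$ with $\mathcal{T}_L:=\max\{\rmt_1(u,v):u,v\in{\rm B}(L),\ 1\le|u-v|_1\le L\}$. It then suffices to show $\E(\mathcal{T}_L^2)=\kO(L^2)$. By the large deviation bound for $\rmt_1$ and a union bound over the at most $(2L+1)^{2d}$ relevant pairs, $\pp(\mathcal{T}_L\ge t)\le(2L+1)^{2d}e^{-t^{\varepsilon_1}}$ for $t\ge C_1L$, so using $t^{\varepsilon_1}\ge\tfrac12 t^{\varepsilon_1}+\tfrac12(C_1L)^{\varepsilon_1}$ on $[C_1L,\infty)$,
\[
\E(\mathcal{T}_L^2)=\int_0^{\infty}2t\,\pp(\mathcal{T}_L\ge t)\,dt\ \le\ (C_1L)^2+(2L+1)^{2d}e^{-(C_1L)^{\varepsilon_1}/2}\int_0^{\infty}2t\,e^{-t^{\varepsilon_1}/2}\,dt\ =\ \kO(L^2),
\]
and therefore $\E\bigl((\max_{\gamma\in\kP_L}\sum_{i}\rmt_1(y_i,y_{i+1}))^2\bigr)\le L^2\,\E(\mathcal{T}_L^2)=\kO(L^4)$.

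I expect the only real obstacle to be the estimate required in (i): bounding $\E(\max_{\gamma\in\kP_L}X_{M,k}(\gamma))$ so that it is at once linear in $L$ and summable in $(M,k)$. This is exactly what Lemma~\ref{wdsp} provides — by tessellating $\Z^d$ into $2^d$ groups of well‑separated boxes, thereby reducing to the independent percolation case (Lemma~\ref{wsp}) and the lattice‑animal bound from \cite{DHS} — after which all that remains is exponent bookkeeping to check that $\sum_{m'}(m')^{d+4}e^{-(m')^{\varepsilon_1}/d}<\infty$. Part (ii) is essentially free once one observes that a path in $\kP_L$ has at most $L$ edges.
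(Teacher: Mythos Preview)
Your proof is correct and follows essentially the same approach as the paper's: for (i) you decompose by $(M,k)$, dominate $a_{M,k}^{\gamma}$ by the locally dependent site‑percolation weight $X_{M,k}(\gamma)$, apply Lemma~\ref{wdsp}, and sum; for (ii) you use the same crude bound $\sum_i\rmt_1(y_i,y_{i+1})\le L\cdot\max_{u,v\in{\rm B}(L)}\rmt_1(u,v)$ together with the $\rmt_1$ tail estimate from Lemma~\ref{lemttt}. The only cosmetic differences are that you (correctly) take the dependency range to be $2m'$ rather than $m'$, and that you reorganise the double sum over $(M,k)$ into a single sum over $m'=C_1M+k$; the resulting exponent bookkeeping is equivalent.
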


\vspace{0.3 cm}

 We postpone the proof of this lemma  to Section \ref{sec:lems}.
 
 \vspace{0.3 cm}
 
 \begin{lem}\label{maximal-jump-lem}
   Given a path $\gamma=(y_i)^{\ell}_{i=1}\subset \Z^d$, we define the maximal jump $$\mathcal{M}(\gamma)=\max_{1\leq i\leq \ell-1}|y_i-y_{i+1}|_1.$$
   Then, there exists $\e>0$ independent of $x$ such that for any $L\geq m=|x|_1^{1/4}$,
 $$\P(\mathcal{M}(\gamma_{0,x})\geq L)\leq e^{-L^{\e}}.$$
 \end{lem}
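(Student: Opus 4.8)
The plan is to reduce the event $\{\mathcal{M}(\gamma_{0,x})\ge L\}$ to the rare event controlled by Lemma~\ref{l3}, after localizing the optimal path. The key structural observation is that a long jump along an \emph{optimal} path is itself an optimal direct excursion of the associated random walk. Writing $\gamma_{0,x}=(y_0,\dots,y_\ell)$ with $y_0=0$, $y_\ell=x$ and ${\rm T}(x)=\sum_{i=1}^{\ell}t(y_{i-1},y_i)$ (a finite sum a.s.\ by Lemma~\ref{l1}, so in particular each $t(y_{i-1},y_i)<\infty$), I would first check that ${\rm T}(y_{i-1},y_i)=t(y_{i-1},y_i)$ for every $i$. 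Indeed, splicing any path from $u:=y_{i-1}$ to $v:=y_i$ into $\gamma_{0,x}$ in place of the single step $u\to v$ yields a path from $0$ to $x$; its cost is at least ${\rm T}(x)$, and after cancelling the common terms this says precisely that the spliced-in piece costs at least $t(u,v)$, so taking the infimum over such pieces gives ${\rm T}(u,v)\ge t(u,v)$, while ${\rm T}(u,v)\le t(u,v)$ is trivial. Since $t(u,v)\ge|u-v|_1$, it follows that whenever a step of $\gamma_{0,x}$ has length $\ge L$, say from $u$ to $v$, one has ${\rm T}(u,v)=t(u,v)=M'$ for some integer $M'\ge L$ — exactly the event estimated by Lemma~\ref{l3}.

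Next I would localize and union-bound. Because $t\ge|\cdot|_1$ on every step, $|\gamma_{0,x}|_1\le{\rm T}(x)$, so $\gamma_{0,x}\subset{\rm B}({\rm T}(x))$; hence, with $R:=C_1|x|_1+L$,
$$\P\big(\mathcal{M}(\gamma_{0,x})\ge L\big)\le\P\big({\rm T}(x)>R\big)+\P\big(\mathcal{M}(\gamma_{0,x})\ge L,\ \gamma_{0,x}\subset{\rm B}(R)\big).$$
The first term is at most $e^{-R^{\varepsilon_1}}\le e^{-L^{\varepsilon_1}}$ by Lemma~\ref{l1} (valid since $R\ge C_1|x|_1$). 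On the event in the second term, the previous paragraph furnishes $u,v\in{\rm B}(R)$ with ${\rm T}(u,v)=t(u,v)=M'$ for some $M'\ge L$, so a union bound over $u,v\in{\rm B}(R)$ and over $M'\ge L$, together with Lemma~\ref{l3}, bounds it by $(\#{\rm B}(R))^2\sum_{M'\ge L}e^{-(M')^{\varepsilon_2}}\le C(\#{\rm B}(R))^2 e^{-L^{\varepsilon_2}/2}$. Finally, the hypothesis $L\ge m=|x|_1^{1/4}$ forces $|x|_1\le CL^4$, hence $R\le CL^4$ and $(\#{\rm B}(R))^2\le CL^{8d}$; collecting terms,
$$\P\big(\mathcal{M}(\gamma_{0,x})\ge L\big)\le e^{-L^{\varepsilon_1}}+CL^{8d}e^{-L^{\varepsilon_2}/2},$$
which is $\le e^{-L^{\varepsilon}}$ for $\varepsilon:=\tfrac{1}{2}\min(\varepsilon_1,\varepsilon_2)$ once $L$ (equivalently $|x|_1$) is large enough, the remaining bounded range of $|x|_1$ being absorbed into the constants.

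The only genuinely substantive point is the structural identity ${\rm T}(u,v)=t(u,v)$ along the optimal path, which is what makes Lemma~\ref{l3} applicable at all: the hitting time $t(u,v)$ by itself is \emph{not} concentrated, so one really needs to know that a jump of $\gamma_{0,x}$ realizes the optimal first passage time. After that, everything is a routine union bound; the sole role of the constraint $L\ge|x|_1^{1/4}$ is to let the polynomial volume factor $(\#{\rm B}(R))^2$ be swallowed by $e^{-L^{\varepsilon_2}/2}$, and the mild annoyance that the union-bound region is random is dealt with by the localization step rather than by conditioning.
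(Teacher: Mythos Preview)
Your proof is correct and follows essentially the same approach as the paper: both arguments localize $\gamma_{0,x}$ to the box ${\rm B}(C_1|x|_1+L)$ via Lemma~\ref{l1}, use the structural identity ${\rm T}(y_i,y_{i+1})=t(y_i,y_{i+1})$ for consecutive points of an optimal path, and then union-bound with Lemma~\ref{l3}. Your write-up is in fact more explicit than the paper's on two points the paper leaves implicit---the splicing argument establishing ${\rm T}(u,v)=t(u,v)$ along $\gamma_{0,x}$, and the role of the hypothesis $L\ge |x|_1^{1/4}$ in absorbing the polynomial factor $(\#{\rm B}(R))^2$.
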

 \begin{proof}
   We write $ \gamma_{0,x}=(y_i)^{\ell}_{i=1}$. If $|y_i-y_{i+1}|_1\geq L $, then ${\rm T}(y_i,y_{i+1})=t(y_i,y_{i+1})\geq L$. By the union bound, Lemma~\ref{l1} and Lemma~\ref{l3}, we have
    \begin{equation} \label{maximal-jump}
      \begin{split}
        &\P(\mathcal{M}(\gamma_{0,x})\geq L) \\
        & \leq \P(\exists u,v\in {\rm B}(C_1|x|_1+L)\text{ s.t. ${\rm T}(u,v)=t(u,v)\geq L$})+\P(|\gamma_{0,x}|_1\geq C_1|x_1|+L)\\
        &\leq  [\#{\rm B}(C_1|x|_1+L)]^2 \max_{u,v\in {\rm B}(C_1|x|_1+L)}\P({\rm T}(u,v)=t(u,v)\geq L)+\P({\rm T}(0,x)\geq C_1|x_1|+L)\\
        &\leq  e^{-L^{\e}},
      \end{split}
    \end{equation}
    for some constant $\e>0$.
\end{proof}
 
 \subsubsection{Proof of Proposition \ref{prop2} (ii)}
Fix $k \geq 1$. We first estimate $\pp(\kE_{k,L})$.   Assume that $\kE_{k,L}$ occurs and $\gamma_{0,x} \cap (x_k-{\rm B}(m)) = (y_i)_{i=1}^{\ell}$. Then 
 $$L=\sum_{y \in \gamma_{0,x} \cap x_k -{\rm B}(m)} |y-\bar{y}|_1 \leq \sum_{i=1}^{\ell -1}  t(y_i,y_{i+1}) + t(y_{\ell},\bar{y}_{\ell}) = {\rm T}(y_1,\bar{y}_{\ell}).$$
 Moreover,   $\bar{y}_{\ell} \in x_k-{\rm B}(m+\mathcal{M}(\gamma_{0,x}))$, since $|y_{\ell}-\bar{y}_{\ell}|_1 \leq\mathcal{M}(\gamma_{0,x}) $ and $y_{\ell} \in x_k-{\rm B}(m)$.
Therefore, using the union bound, Lemma \ref{l1} and Lemma~\ref{maximal-jump-lem}, for $L\geq 4dC_1m+1$,
\begin{eqnarray*}
  \pp( \kE_{k,L}) &\leq &  \pp \left(  \exists u,v \in x_k-{\rm B}(m+\mathcal{M}(\gamma_{0,x}))\text{ such that }{\rm T}(u,v) \geq L  \right) \notag \\
  & \leq  & \pp \left(  \exists u,v \in {\rm B}(m+(L/4dC_1))\text{ such that }{\rm T}(u,v) \geq L  \right)+\P(M(\gamma_{0,x})\geq L/4dC_1) \notag \\
&\leq & (2(m+L))^{2d} e^{-L^{\e}}+ e^{-L^{\e}} \leq  (4(m+L))^{2d} e^{-L^{\e}}. 
\end{eqnarray*}
Combining this inequality with \eqref{collect} and Lemma \ref{emx}, we obtain that there exists $C>0$ such that for any $k\geq 1$
\begin{eqnarray} \label{s19}
  \E(|\Delta_k|) &\leq&  \frac{C}{\#{\rm B}(m)} \left(m + \sum_{L\geq 4d C_1m} L^2 (4(m+L))^d e^{-L^{\e}/2}\right) \notag\\
  &=&\kO(m^{1-d}) = \kO(|x|_1^{(1-d)/4}).
\end{eqnarray}
Since ${\rm T}(x) \geq |\gamma_{0,x}|_1$, by using Lemma \ref{l1}, for any $L \geq C_1|x|_1$
\begin{eqnarray} \label{el}
\pp(\kE_L) \leq \pp({\rm T}(x) \geq L) \leq e^{-L^{\varepsilon_1}}.
\end{eqnarray}
Using this inequality, \eqref{s18} and Lemma \ref{emx}, we get
\begin{eqnarray} \label{s20}
  \sum_{k\geq 1}\E(|\Delta_k|) &\leq& C \left(|x|_1 + \sum_{L\geq  C_1|x|_1} L^2  e^{-L^{\varepsilon}/2}\right) \notag \\
  &=& \kO(|x|_1).
\end{eqnarray}
Now, Proposition \ref{prop2} (ii) follows from  \eqref{s19} and \eqref{s20}, since
$$\sum_{k\geq 1} (\E(|\Delta_k|))^2\leq \left(\max_{k\geq 1}\E|\Delta_k|\right)\left(\sum_{k\geq 1} \E|\Delta_k|\right).$$

\subsubsection{ Proof of Proposition \ref{prop2} (i)} To estimate ${\rm Ent}(\Delta_k)$, we  decompose a simple random walk $({\rm S}^{x_i}_.)$ into the sum of i.i.d. random variables. More precisely, for any $x_i \in \Z^d$ and $j \geq 1$, we write 
$${\rm S}^{x_i}_j=x_i+ \sum_{r=1}^j \w_{i,r},$$
 where $(\w_{i,r})_{i,r \geq 1}$ is an array of i.i.d.\  uniform random variables taking value in the set of canonical coordinates in $\Z^d$, denoted by
 $$\kB_d=\{e_1,\ldots,e_{2d}\}.$$
 Therefore, we can view ${\rm T}(u,v)$ and ${\rm F}_m$ as a function of $(\w_{i,r})$, and hence we sometimes write ${\rm T}(u,v)={\rm T}(u,v,\w)$ to make  the dependence of ${\rm T}(u,v)$ on $\w$ precise.   We define
 $$\Omega=\prod_{i,j\in\N} \Omega_{i,j},$$
 where $\Omega_{i,j}$ is a copy of $\kB_d$. The measure on $\Omega$ is $\pi=\prod_{i,j\in\N} \pi_{i,j}$, where $\pi_{i,j}$  is the uniform measure on $\Omega_{i,j}$. Then we can consider ${\rm F}_m$ as a random variable on the probability space $(\Omega, \pi)$. Given $\w \in \Omega, e \in \kB_d $ and $i,j\in\N$, we define a new configuration $\w^{i,j,e}$ as

 $$\w^{i,j,e}_{k,r}= \begin{cases}
\w_{k,r} \quad \textrm{ if } (k,r) \neq (i,j)\\
e\quad \quad \,\, \textrm{ if } (k,r) = (i,j).
 \end{cases}$$
 
 We define
 \begin{eqnarray}
 \Delta_{i,j}f= \left[{\rm E}\Big(|f(\w^{i,j,U})-f(\w^{i,j,\tilde{U}})|^2\Big)\right]^{1/2},
 \end{eqnarray}
where the expectation  runs over two independent random variables $U$ and $\tilde{U}$, with the same law as the uniform distribution on $\kB_d$.
\begin{lem} \label{del}
	We have 
	$$\sum_{k=1}^{\infty} {\rm Ent}(\Delta_k^2) \leq 2d\sum_{i=1}^{\infty} \sum_{j=1}^{\infty} \E_{\pi} [(\Delta_{i,j}{\rm F}_m)^2].$$
	\end{lem}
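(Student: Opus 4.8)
The plan is to combine the martingale‑to‑entropy bound of Damron--Hanson--Sosoe (Lemma~\ref{l4} in spirit, but here we need the reverse direction connecting $\mathrm{Ent}(\Delta_k^2)$ to a Dirichlet‑type form) with the tensorization property of entropy (Lemma~\ref{ent}) and the weighted logarithmic Sobolev inequality (Lemma~\ref{bona}), exactly as in the first‑passage percolation argument of \cite{BRo,DHS}. The key point is that $\Delta_k$ is $\kF_k$‑measurable with $\E(\Delta_k\mid\kF_{k-1})=0$, so after conditioning on $\kF_{k-1}$ the variable $\Delta_k$ is a function of the single simple random walk $({\rm S}^{x_k}_\cdot)$, i.e.\ of the coordinates $(\w_{k,r})_{r\geq 1}$ only.

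\medskip
\noindent\textbf{Step 1: reduce to entropy of $\Delta_k$ in the $x_k$‑block.}
First I would note that, since $\Delta_k$ depends on $\w$ only through $(\w_{i,r})_{i\leq k,\ r\geq 1}$ and $\E(\Delta_k\mid\kF_{k-1})=0$, conditionally on $(\w_{i,r})_{i<k}$ the function $\Delta_k^2$ is a function of the block $(\w_{k,r})_{r\geq 1}$. Applying the tensorization inequality (Lemma~\ref{ent}) to $\Delta_k^2$ over the coordinates $(\w_{k,r})_{r\geq 1}$ gives
\begin{equation*}
\mathrm{Ent}(\Delta_k^2)\leq \sum_{j=1}^{\infty}\E_{\pi}\big[\mathrm{Ent}_{k,j}(\Delta_k^2)\big],
\end{equation*}
where $\mathrm{Ent}_{k,j}$ is the entropy taken with respect to $\pi_{k,j}$ (the uniform measure on $\kB_d$, a set of size $2d\geq 2$) with all other coordinates fixed.

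\medskip
\noindent\textbf{Step 2: apply the weighted log–Sobolev inequality coordinatewise.}
For each fixed $j$ and fixed values of the other coordinates, $\Delta_k$ is a function $f:\kB_d\to\R$ of the $(k,j)$‑coordinate; taking $k'=2d$ in Lemma~\ref{bona} (identifying $\kB_d$ with $\{1,\dots,2d\}$) yields
\begin{equation*}
\mathrm{Ent}_{k,j}(\Delta_k^2)\leq 2d\,{\rm E}\big[(\Delta_k(\w^{k,j,U})-\Delta_k(\w^{k,j,\tilde U}))^2\big]=2d\,(\Delta_{k,j}\Delta_k)^2,
\end{equation*}
with $U,\tilde U$ i.i.d.\ uniform on $\kB_d$, in the notation $\Delta_{i,j}$ introduced just above the statement. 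Summing over $j$ and then over $k$ gives
\begin{equation*}
\sum_{k=1}^{\infty}\mathrm{Ent}(\Delta_k^2)\leq 2d\sum_{k=1}^{\infty}\sum_{j=1}^{\infty}\E_{\pi}\big[(\Delta_{k,j}\Delta_k)^2\big].
\end{equation*}

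\medskip
\noindent\textbf{Step 3: replace $\Delta_{k,j}\Delta_k$ by $\Delta_{k,j}{\rm F}_m$.}
It remains to bound $(\Delta_{k,j}\Delta_k)^2$ by $(\Delta_{k,j}{\rm F}_m)^2$ in $L^1(\pi)$ for each fixed $(k,j)$. Since $\Delta_k=\E({\rm F}_m\mid\kF_k)-\E({\rm F}_m\mid\kF_{k-1})$ and the resampling at coordinate $(k,j)$ with $k$‑th block acts only inside $\kF_k$ (it leaves $\kF_{k-1}$ untouched, so the second term is unaffected), one has $\Delta_k(\w^{k,j,e})-\Delta_k(\w^{k,j,e'})=\E({\rm F}_m\mid\kF_k)(\w^{k,j,e})-\E({\rm F}_m\mid\kF_k)(\w^{k,j,e'})$. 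Then conditional Jensen applied to the $\kF_k$‑conditional expectation (which averages over coordinates not affecting $\kF_k$, hence commutes with the resampling at $(k,j)$) gives, for the convex function $t\mapsto t^2$,
\begin{equation*}
(\Delta_{k,j}\Delta_k)^2\leq \E\big[(\Delta_{k,j}{\rm F}_m)^2\mid\kF_k\big],
\end{equation*}
and taking $\E_\pi$ and summing over $j,k$ yields the claimed bound
\begin{equation*}
\sum_{k=1}^{\infty}\mathrm{Ent}(\Delta_k^2)\leq 2d\sum_{i=1}^{\infty}\sum_{j=1}^{\infty}\E_{\pi}\big[(\Delta_{i,j}{\rm F}_m)^2\big].
\end{equation*}

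\medskip
\noindent\textbf{Main obstacle.}
The delicate point is Step~3: one must be careful that the resampling operator $\w\mapsto\w^{k,j,e}$ at a coordinate inside the $k$‑th block genuinely commutes with conditioning on $\kF_k$ while annihilating the $\kF_{k-1}$‑part of $\Delta_k$, so that the difference of $\Delta_k$ equals the difference of $\E({\rm F}_m\mid\kF_k)$; and then that conditional Jensen can be applied in the right direction (the conditional expectation defining $\E({\rm F}_m\mid\kF_k)$ integrates out precisely the coordinates $(\w_{i,r})_{i>k}$, which are untouched by the $(k,j)$‑resampling). Once this commutation is checked, the rest is a routine chaining of Lemmas~\ref{ent} and~\ref{bona}. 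This is the frog‑model analogue of \cite[Section~3]{DHS}, and no new idea beyond bookkeeping of the sigma‑algebras is needed here.
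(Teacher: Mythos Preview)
Your Step~1 is where the argument breaks. It is true that, conditionally on $(\w_{i,r})_{i<k}$, the variable $\Delta_k$ is a function of the $k$-th block $(\w_{k,r})_{r\ge 1}$ only; but this does \emph{not} allow you to restrict the entropy tensorization to the $k$-th block. The tensorization inequality (Lemma~\ref{ent}) applied to the full product yields
\[
\mathrm{Ent}(\Delta_k^2)\ \le\ \sum_{i=1}^{\infty}\sum_{j=1}^{\infty}\E_\pi\big[\mathrm{Ent}_{i,j}(\Delta_k^2)\big],
\]
and the terms with $i<k$ are in general strictly positive: $\Delta_k$ genuinely depends on $(\w_{i,r})_{i<k}$ through both $\E({\rm F}_m\mid\kF_k)$ and $\E({\rm F}_m\mid\kF_{k-1})$, and the martingale property $\E(\Delta_k\mid\kF_{k-1})=0$ says nothing about the conditional entropy of $\Delta_k^2$ in those coordinates. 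A two-coordinate example already shows the failure: with $\w_1,\w_2\in\{-1,1\}$ i.i.d.\ uniform and ${\rm F}=\w_1+\w_2+\w_1\w_2$, one has $\Delta_2=\w_2(1+\w_1)$, so $\Delta_2^2=(1+\w_1)^2$ is constant in $\w_2$, giving $\E[\mathrm{Ent}_2(\Delta_2^2)]=0$ while $\mathrm{Ent}(\Delta_2^2)=2\log 2>0$. Your Step~3 (Jensen) only handles the case $i=k$, so once the $i<k$ terms survive, the proof does not close.

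The paper's route avoids this by keeping the full sum over $(i,j)$ after tensorization and Lemma~\ref{bona}, obtaining
\[
\sum_{k\ge 1}\mathrm{Ent}(\Delta_k^2)\ \le\ 2d\sum_{i,j}\sum_{k\ge 1}\E_\pi\big[(\Delta_{i,j}\Delta_k)^2\big],
\]
and then, for each fixed $(i,j)$, summing over $k$ first. The key observation is that $\big(\E[{\rm F}_m(\w^{i,j,U})-{\rm F}_m(\w^{i,j,\tilde U})\mid\tilde\kF_k]\big)_{k\ge 0}$ is a martingale with respect to the enlarged filtration $\tilde\kF_k=\kF_k\vee\sigma(U,\tilde U)$ (for $k\ge i$; $\tilde\kF_k=\kF_k$ for $k<i$), so the squared increments sum telescopically to the $L^2$ norm of the limit:
\[
\sum_{k\ge 1}\E_\pi\big[(\Delta_{i,j}\Delta_k)^2\big]=\E_\pi\big[(\Delta_{i,j}{\rm F}_m)^2\big].
\]
It is this orthogonality identity, not a one-block Jensen step, that collapses the double sum to the claimed right-hand side.
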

\begin{proof}
	We recall that $\Delta_k=\E({\rm F}_m\mid \kF_k)-\E({\rm F}_m\mid \kF_{k-1})$, where 
	$$\kF_k=\sigma (({\rm S}_j^{x_i}), i \leq k, j \geq 1) = \sigma (\w_{i,j}, i\leq k, j \geq 1).$$
Notice  that $\Delta_k^2 \in {\rm L}^2$, since ${\rm T}(x) \in {\rm L}^4$ by Lemma \ref{l1}. Hence, using the tensorization of entropy (Lemma \ref{ent}), we have for $k\geq 1$, 
	$${\rm Ent}(\Delta_k^2)={\rm Ent}_{\pi}(\Delta_k^2) \leq \E_{\pi}\sum_{i=1}^{\infty} \sum_{j=1}^{\infty} {\rm Ent}_{\pi_{i,j}}\Delta_k^2.$$
	By Lemma \ref{bona},
	$${\rm Ent}_{\pi_{i,j}} \Delta_k^2 \leq 2d  (\Delta_{i,j}\Delta_k)^2.$$
	Thus 
	\begin{eqnarray} \label{15}
	\sum_{k=1}^{\infty} {\rm Ent}(\Delta_k^2) \leq 2d\sum_{j=1}^{\infty}\sum_{i=1}^{\infty} \sum_{k=1}^{\infty} \E_{\pi} [(\Delta_{i,j}\Delta_k)^2].
	\end{eqnarray}
     We fix $i,j$.   We define the filtration $\tilde{\mathcal{F}_k}$ as $\tilde{\mathcal{F}_k}=\mathcal{F}_k$ if $k<i$, and $\tilde{\mathcal{F}_k}=\mathcal{F}_k\lor \sigma(U,\tilde{U})$ if $k\geq i$. For simplicity of notation, we denote $\E=\E_\pi {\rm E}$.  Since
         \begin{align*}
           \E_{\pi} [(\Delta_{i,j}\Delta_k)^2]&=\E[(\E[\rmf_m(\w^{i,j,U})-\rmf_m(\w^{i,j,\tilde{U}})|~\tilde{\mathcal{F}}_k]- \E[\rmf_m(\w^{i,j,U})-\rmf_m(\w^{i,j,\tilde{U}})|~\tilde{\mathcal{F}}_{k-1}])^2]\\
           &= \E[(\E[\rmf_m(\w^{i,j,U})-\rmf_m(\w^{i,j,\tilde{U}})|~\tilde{\mathcal{F}}_k])^2] - \E[(\E[\rmf_m(\w^{i,j,U})-\rmf_m(\w^{i,j,\tilde{U}})|~\tilde{\mathcal{F}}_{k-1}])^2],
           \end{align*}
	 $$\E[(\E[\rmf_m(\w^{i,j,U})-\rmf_m(\w^{i,j,\tilde{U}})|~\tilde{\mathcal{F}}_{0}])^2]=0,$$
         and $$\lim_{k\to\infty}\E[(\E[\rmf_m(\w^{i,j,U})-\rmf_m(\w^{i,j,\tilde{U}})|~\tilde{\mathcal{F}}_{k}])^2]=\E_{\pi} [(\Delta_{i,j}{\rm F}_m)^2],$$ we get 
	$$ \sum_{k=1}^{\infty} \E_{\pi} [(\Delta_{i,j}\Delta_k)^2]= \E_{\pi} [(\Delta_{i,j}{\rm F}_m)^2],$$
	for any $i,j$.        Combining this equation with \eqref{15}, we get the desired result.
\end{proof}

\vspace{0.2 cm}
\begin{proof}[Proof of Proposition \ref{prop2} (i).]
	Using  Lemma \ref{del} and Jensen's inequality, we get  
\begin{eqnarray} \label{16'}
\sum_{k=1}^{\infty} {\rm Ent}(\Delta_k^2) &\leq& 2d \sum_{i=1}^{\infty} \sum_{j=1}^{\infty} \E_{\pi} [(\Delta_{i,j}{\rm F}_m)^2] \notag \\ 
&\leq& \frac{2d}{\#{\rm B}(m)} \sum_{z\in {\rm B}(m)} \sum_{i=1}^{\infty} \sum_{j=1}^{\infty} \E_{\pi} [(\Delta_{i,j}{\rm T}(z,z+x))^2].
\end{eqnarray}
By the translation invariance of the passage times, we reach
\begin{eqnarray} \label{16}
\sum_{k=1}^{\infty} {\rm Ent}(\Delta_k^2) \leq 2d  \sum_{i=1}^{\infty} \sum_{j=1}^{\infty} \E_{\pi} [(\Delta_{i,j}{\rm T}(x))^2].
\end{eqnarray}
On the other hand,
\begin{eqnarray*}
&&\E_{\pi}[(\Delta_{i,j}{\rm T}(x))^2]=\E_{\pi} [({\rm E}|{\rm T}(x,\w^{i,j,U})-{\rm T}(x,\w^{i,j,\tilde{U}})|)^2] \notag\\
&\leq&\E_{\pi} {\rm E}[|{\rm T}(x,\w^{i,j,U})- {\rm T}(x,\w^{i,j,\tilde{U}})|^2]\notag\\
&=&\E_{\pi} {\rm E}[|{\rm T}(x,\w^{i,j,U})- {\rm T}(x)|^2]\notag\\
&=& 2\E_{\pi} {\rm E}[({\rm T}(x,\w^{i,j,U})- {\rm T}(x))^2\I({\rm T}(x,\w^{i,j,U}) \geq {\rm T}(x))].
\end{eqnarray*}
We observe that if  $x_i \not \in \gamma_{0,x}$, or $x_i  \in \gamma_{0,x}$ but ${\rm T}(x_i,\bar{x}_i) <j$, then 
$${\rm T}(x,\w^{i,j,U}) \leq {\rm T}(x).$$
 Otherwise, assume that $x_i  \in \gamma_{0,x}$ and ${\rm T}(x_i,\bar{x}_i) \geq j$. Then   for any $e \in \kB_d$, 
\begin{eqnarray*}
{\rm T}(x_i,\bar{x}_i) \geq {\rm T}(x_i,\bar{x}_i+e-\w_{i,j},\w^{i,j,e}),
\end{eqnarray*}
since if we only replace   $\w_{i,j}$ by $e$, by $t(x_i, \bar{x}_i)$ (also equals ${\rm T}(x_i,\bar{x}_i)$, as $x_i \sim \bar{x}_i \in \gamma_{0,x}$) steps,  the simple random walk $({\rm S}^{x_i}_.)$ arrives at $\bar{x}_i+e-\w_{i,j}$. Moreover, 
\begin{eqnarray*}
{\rm T}(x_i, \w^{i,j,e}) = {\rm T}(x_i), \quad  {\rm T}(\bar{x}_i,x,\w^{i,j,e}) \leq {\rm T}(\bar{x}_i,x), 
\end{eqnarray*}
and 
\begin{eqnarray*}
{\rm T}(x)&=&{\rm T}(x_i)+{\rm T}(x_i,\bar{x}_i) + {\rm T}(\bar{x}_i,x) ,\\
{\rm T}(x,\w^{i,j,e})&\leq&{\rm T}(x_i,\w^{i,j,e})+{\rm T}(x_i,\bar{x}_i -e+\w_{i,j},\w^{i,j,e}) \\
&&+{\rm T}(\bar{x}_i -e+\w_{i,j},\bar{x}_i,\w^{i,j,e})  + {\rm T}(\bar{x}_i,x,\w^{i,j,e}). 
\end{eqnarray*}
Therefore, we reach
\begin{eqnarray*}
{\rm T}(x,\w^{i,j,U})-{\rm T}(x) \leq {\rm T}(\bar{x}_i -U+\w_{i,j},\bar{x}_i,\w^{i,j,U}) \leq \max_{y:|y-\bar{x}_i|_1 \leq 2} {\rm T}(y,\bar{x}_i,\w^{i,j,U}).
\end{eqnarray*}
Furthermore, since $\w$ differs from $\w^{i,j,U}$ only in the trajectory of $({\rm S}^{x_i}_.)$,  for any $u,v \in \Z^d$, 
\begin{eqnarray}
{\rm T}(u,v,\w^{i,j,U}) \leq {\rm T}^{[x_i]}(u,v) \leq {\rm T}_2(u,v),
\end{eqnarray}
where we define
\begin{equation}\label{def:t2}
  {\rm T}_2 (u,v)=\sup_{z \in \Z^d}  {\rm T}^{[z]}(u,v).
  \end{equation}
Therefore, we have
\begin{eqnarray*}
\E_{\pi}[(\Delta_{i,j}{\rm T}(x))^2] \leq 2\E \Big[\max_{y:|y-\bar{x}_i|_1 \leq 2} {\rm T}_2(y,\bar{x}_i)^2 \I(x_i\sim \bar{x}_i \in \gamma_{0,x}, {\rm T}(x_i,\bar{x}_i) \geq j) \Big],
\end{eqnarray*}
and thus
\begin{eqnarray*}
&&\sum_{j=1}^{\infty}\E_{\pi}(\Delta_{i,j}{\rm T}(x))^2 \\
 &\leq& 2\E  \Big[\sum_{j=1}^{\infty}\max_{y:|y-\bar{x}_i|_1 \leq 2} {\rm T}_2(y,\bar{x}_i)^2\I(x_i\sim \bar{x}_i \in \gamma_{0,x}, {\rm T}(x_i,\bar{x}_i) \geq j) \Big] \notag\\
 &=&2\E \Big[{\rm T}(x_i,\bar{x}_i)\max_{y:|y-\bar{x}_i|_1 \leq 2} {\rm T}_2(y,\bar{x}_i)^2 \I(x_i\sim \bar{x}_i \in \gamma_{0,x})   \Big]  \notag\\
 &\leq &  \E  \Big[({\rm T}(x_i,\bar{x}_i)^2 +\max_{y:|y-\bar{x}_i|_1 \leq 2} {\rm T}_2(y,\bar{x}_i)^4) \I(x_i\sim \bar{x}_i \in \gamma_{0,x})   \Big]. 
\end{eqnarray*}
This yields that
\begin{eqnarray} \label{s26}
&&\sum_{i=1}^{\infty}\sum_{j=1}^{\infty}\E_{\pi}(\Delta_{i,j}{\rm T}(x))^2 \notag \\ 
&\leq &  \E  \Big[\sum_{i=1}^{\infty}({\rm T}(x_i,\bar{x}_i)^2 +\max_{y:|y-\bar{x}_i|_1 \leq 2} {\rm T}_2(y,\bar{x}_i)^4) \I(x_i\sim \bar{x}_i \in \gamma_{0,x})   \Big] \notag \\
&= &  \E \Big[\sum_{i=1}^{\infty}({\rm T}(x_i,\bar{x}_i)^2 +\max_{y:|y-\bar{x}_i|_1 \leq 2} {\rm T}_2(y,\bar{x}_i)^4) \I(x_i\sim \bar{x}_i \in \gamma_{0,x})   \Big] \notag \\
& \leq  &  \E \left(\sum_{y\in \gamma_{0,x}}{\rm T}(y,\bar{y})^2\right)   +   \E \left(\sum_{y\in \gamma_{0,x}} \max_{u:|u-y|_1 \leq 2}{\rm T}_2(u,y)^4\right).
\end{eqnarray}
Now using the same arguments for \eqref{s17} and \eqref{s18}, we get
\begin{eqnarray} \label{s27}
  && \E \left(\sum_{y\in \gamma_{0,x}} \max_{|u-y|_1 \leq 2}{\rm T}_2(u,y)^4\right)\\
  &\leq& \E \left( \max_{\gamma =(y_i)_{i=1}^{\ell} \in \kP_{C_1|x|_1}}  \sum \limits_{i=1}^{\ell}  \max_{|u-y_i|_1 \leq 2}{\rm T}_2(u,y_i)^4\right) \notag \\ 
 &&+ \sum_{L\geq C_1|x|_1+1} \left[\E  \left(\max_{\gamma =(y_i)_{i=1}^{\ell} \in \kP_L}  \sum \limits_{i=1}^{\ell}  \max_{|u-y_i|_1 \leq 2}{\rm T}_2(u,y_i)^4 \right)^2 \right]^{1/2} \pp(\kE_{L})^{1/2}. 
 \end{eqnarray}
\begin{lem} \label{t2l} As $|x|_1$ tends to infinity,
	$$ \E \left(\sum_{y\in \gamma_{0,x}}{\rm T}(y,\bar{y})^2\right) = \kO(|x|_1).$$
\end{lem}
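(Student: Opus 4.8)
The plan is to bound the sum $\sum_{y\in\gamma_{0,x}}{\rm T}(y,\bar y)^2$ by splitting according to the size of each jump and the value of the passage time across it, in the same spirit as the sketch of Lemma~\ref{emx}. Writing $\gamma_{0,x}=(y_i)_{i=1}^{\ell}$, for a jump with $|y_i-y_{i+1}|_1=M$ the passage time ${\rm T}(y_i,y_{i+1})$ is at least $M$ (since one has to reach $y_{i+1}$), so I decompose
$$\sum_{y\in\gamma_{0,x}}{\rm T}(y,\bar y)^2 \leq \sum_{M\geq 1}\sum_{k\geq 0}(C_1M+k)^2\, b_{M,k},$$
where $b_{M,0}$ counts edges of $\gamma_{0,x}$ with $|y_i-y_{i+1}|_1=M$ and ${\rm T}(y_i,y_{i+1})\leq C_1M$, and for $k\geq 1$, $b_{M,k}$ counts edges with $|y_i-y_{i+1}|_1=M$ and ${\rm T}(y_i,y_{i+1})=C_1M+k$. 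Each $b_{M,k}$ is dominated by $X_{M,k}:=\sum_{y\in\gamma_{0,x}}J_y^{M,k}$, where $J_y^{M,k}=\I(\exists z:|z-y|_1\leq M,\ {\rm T}(y,z)=C_1M+k)$ for $k\geq 1$ and $J_y^{M,0}=\I(\exists z:|z-y|_1=M,\ {\rm T}(y,z)\leq C_1 M)\le 1$; the point is that for $k\ge1$, by Lemma~\ref{l1} and the union bound, $q_{M,k}:=\sup_y\E(J_y^{M,k})\leq (2(C_1M+k)+1)^d e^{-(C_1M+k)^{\varepsilon_1}}$, which is summable against any polynomial factor in $M+k$.

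Next I would control $\E(X_{M,k})$ over $\gamma_{0,x}$. Since ${\rm T}(0,x)\leq C_1|x|_1$ with overwhelming probability (Lemma~\ref{l1}), on that event $\gamma_{0,x}\in\kP_{C_1|x|_1}$, so $X_{M,k}\leq \max_{\gamma\in\kP_{C_1|x|_1}}X_{M,k}(\gamma)$ in the notation of Lemma~\ref{wdsp} applied to the $(C_1M+k)$-dependent site-percolation generated by $\{J_y^{M,k}\}_y$. When $(3(C_1M+k)+1)^d q_{M,k}\leq 1$ Lemma~\ref{wdsp}(i) gives $\E(\max_\gamma X_{M,k}(\gamma))\leq C\,|x|_1\,(C_1M+k)^{d+1}q_{M,k}^{1/d}\leq C|x|_1 (C_1M+k)^{d+2}e^{-(C_1M+k)^{\varepsilon_1}/d}$; for the finitely many small pairs $(M,k)$ where the density hypothesis fails, I bound $X_{M,k}\leq |\gamma_{0,x}|_1 \le C_1|x|_1$ trivially. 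On the complementary event $\{{\rm T}(0,x)>C_1|x|_1\}$ I use Lemma~\ref{l1} together with the deterministic bound $\sum_y {\rm T}(y,\bar y)^2\leq {\rm T}(0,x)^2$ (actually $\leq {\rm T}(0,x)^2$ since each summand is at most ${\rm T}(0,x)$ and they sum to at most ${\rm T}(0,x)$) and the fact that on the tail ${\rm T}(0,x)\ge C_1|x|_1$ has probability $\le e^{-(C_1|x|_1)^{\varepsilon_1}}$, so this contribution is $\kO(1)$ after integrating the polynomial $\times$ exponential tail, exactly as in the third term of Corollary~\ref{cor1}.

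Putting these together,
$$\E\!\left(\sum_{y\in\gamma_{0,x}}{\rm T}(y,\bar y)^2\right)\leq \sum_{M\geq1}\sum_{k\geq0}(C_1M+k)^2\,\E(X_{M,k})\leq C|x|_1\sum_{M\geq1}\sum_{k\geq0}(C_1M+k)^{d+4}e^{-(C_1M+k)^{\varepsilon_1}/d}+\kO(1),$$
and the double sum converges to a finite constant depending only on $d$, which gives the claimed $\kO(|x|_1)$. The main obstacle is the bookkeeping to legitimately reduce from $\gamma_{0,x}$ (a random path whose length is only stochastically $\kO(|x|_1)$ and whose jumps can be arbitrarily large) to maxima over the deterministic families $\kP_{L}$: one must be careful that the exceptional event $\{{\rm T}(0,x)>C_1|x|_1\}$ and the large-jump events are handled without losing the linear order, which is why the second-moment/Cauchy–Schwarz split used for \eqref{s17}–\eqref{s18} — pairing $[\E(\max_{\kP_L}\cdots)^2]^{1/2}$ with $\pp(\kE_L)^{1/2}$ — is the right device and should be invoked here as well. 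The quadratic weight $(C_1M+k)^2$ only costs two extra powers in the polynomial prefactor, which the stretched-exponential tail $e^{-(C_1M+k)^{\varepsilon_1}/d}$ absorbs, so no new estimate beyond Lemmas~\ref{l1}, \ref{wsp} and \ref{wdsp} is needed.
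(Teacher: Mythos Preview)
There is a genuine gap in the $k=0$ contribution. Your indicator
\[
J_y^{M,0}=\I\bigl(\exists z:|z-y|_1=M,\ {\rm T}(y,z)\le C_1M\bigr)
\]
has $q_{M,0}=\sup_y\E(J_y^{M,0})\to 1$ as $M\to\infty$ (indeed $\ge 1-e^{-(C_1M)^{\varepsilon_1}}$ by Lemma~\ref{l1}), so the density hypothesis (E2) of Lemma~\ref{wdsp} fails for \emph{every} $M$, not for ``finitely many small pairs''. Hence your claimed bound $\E(X_{M,0})\le C|x|_1(C_1M)^{d+2}e^{-(C_1M)^{\varepsilon_1}/d}$ is unjustified, and the fallback $X_{M,0}\le|\gamma_{0,x}|_1$ only gives $\sum_M (C_1M)^2 b_{M,0}\le C_1^2\,M_{\max}\cdot|\gamma_{0,x}|_1$, which is not $\kO(|x|_1)$. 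This is precisely where the Lemma~\ref{emx} template breaks: there the $k=0$ term is $\sum_M C_1M\cdot\#\bar A^{\gamma}_{M,0}\le C_1|\gamma|_1$, linear in $M$ and hence controlled by $|\gamma|_1$; here the weight is $(C_1M)^2$ and the extra factor of $M$ is not absorbed.

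The missing ingredient is the optimality constraint ${\rm T}(y_i,y_{i+1})=t(y_i,y_{i+1})$ along $\gamma_{0,x}$, together with Lemma~\ref{l3} (not Lemma~\ref{l1}), which gives $\pp({\rm T}(y,z)=t(y,z)=M)\le e^{-M^{\varepsilon_2}}$. The paper exploits this by dropping the double index altogether: it decomposes by the \emph{value} $M$ of ${\rm T}(y,\bar y)$, sets $I_y=\I(\exists z:|z-y|_1\le M,\ {\rm T}(y,z)=t(y,z)=M)$, and obtains $q_M\le(2M+1)^d e^{-M^{\varepsilon_2}}$ straight from Lemma~\ref{l3}; then Lemma~\ref{wdsp} gives $\E(\#A^{\gamma_{0,x}}_M\,\I({\rm T}(x)\le C_1|x|_1))\le C|x|_1M^{d+2}e^{-M^{\varepsilon}/d}$ and summing $M^2$ against this is finite. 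Your scheme can be repaired by inserting the ${\rm T}=t$ constraint into $J_y^{M,0}$ and citing Lemma~\ref{l3}, but Lemma~\ref{l1} alone cannot close the $k=0$ term.
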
 

\begin{lem} \label{mtl}
	There exists a positive constant $C$ such that for any $L\geq 1$,
	\begin{itemize}
		\item [(i)] 	$$ \E \left( \max_{\gamma =(y_i)_{i=1}^{\ell} \in \kP_L}  \sum \limits_{i=1}^{\ell}  \max_{u:|u-y_i|_1 \leq 2} {\rm T}_2(u,y_i)^4\right) \leq CL.$$ 
		\item[(ii)] $$ \E  \left(\max_{\gamma =(y_i)_{i=1}^{\ell} \in \kP_L}  \sum \limits_{i=1}^{\ell-1}  \max_{u:|u-y_i|_1 \leq 2}{\rm T}_2(u,y_i)^4 \right)^2 \leq CL^{10}. $$
	\end{itemize}
\end{lem}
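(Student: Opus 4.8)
The plan is to run the argument sketched for Lemma~\ref{emx} in the introduction, now reading $W(y):=\max_{u:\,|u-y|_1\le 2}{\rm T}_2(u,y)^{4}$ as a \emph{site} weight on $\Z^d$ and bounding $\max_{\gamma\in\kP_L}\sum_{y\in\gamma}W(y)$ through the maximal weight of paths in a locally dependent site--percolation. The two inputs needed are the ${\rm T}_2$--versions of (O1)--(O2), proved in the same way as for ${\rm T}_1$: for every $u,v$ and $M\ge1$, the event $\{{\rm T}_2(u,v)=M\}$ depends only on the walks $\{({\rm S}^x_\cdot):|x-u|_1\le C_1M\}$; and $\pp({\rm T}_2(u,v)\ge t)\le e^{-t^{\varepsilon_1}}$ for $t\ge C_1|u-v|_1$. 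The reason these survive the supremum over deleted frogs is the elementary fact that ${\rm T}^{[z]}(u,v)={\rm T}(u,v)$ as soon as $z\notin\gamma_{u,v}$: on $\{{\rm T}(u,v)\le M\}$ the optimal path stays inside ${\rm B}(u,M)$, so the supremum defining ${\rm T}_2(u,v)$ is really a maximum over the $\kO(M^{d})$ frogs in ${\rm B}(u,M)$ — each influencing passage times only through the first $\kO(M)$ steps of walks near $u$ — and deleting a single frog raises the passage time only slightly. Since $|u-y|_1\le2$, a union bound over the $\kO(1)$ admissible $u$ then gives $\pp(W(y)\ge n)\le Ce^{-n^{\varepsilon_1/4}}$ for $n\ge(2C_1)^4$.

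The next step is a level--set decomposition of $W$: for $y\in\Z^d$ and $n\ge1$ put $J_y^{n}=\I(\exists\,u:\ |u-y|_1\le2,\ {\rm T}_2(u,y)=n)$, so that $W(y)\le\sum_{n\ge1}n^{4}J_y^{n}$ (the single term $n=\max_u{\rm T}_2(u,y)$ already dominates $W(y)$) and hence $\max_{\gamma\in\kP_L}\sum_{y\in\gamma}W(y)\le\sum_{n\ge1}n^{4}\max_{\gamma\in\kP_L}\sum_{y\in\gamma}J_y^{n}$. By the measurability above, $\{J_y^{n}\}_{y}$ is $C_1n$--dependent with $q_n:=\sup_y\E(J_y^{n})\le Ce^{-n^{\varepsilon_1}}$ for $n\ge2C_1$, so there is $A=A(d)$ with the property that for $n\ge A$ the collection $\{J_y^{n}\}_y$ satisfies (E1)--(E2) of Lemma~\ref{wdsp} with dependence range $C_1n$. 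For part (i), the $n<A$ terms contribute at most $\sum_{n<A}n^{4}\,\max_{\gamma\in\kP_L}l(\gamma)\le A^{5}(L+1)$ (using $\sum_{y\in\gamma}J_y^{n}\le l(\gamma)\le L+1$ for $\gamma\in\kP_L$), while for $n\ge A$ Lemma~\ref{wdsp}(i) gives $\E(\max_{\gamma\in\kP_L}\sum_{y\in\gamma}J_y^{n})\le CL(C_1n)^{d+1}q_n^{1/d}\le C'L\,n^{d+1}e^{-n^{\varepsilon_1}/d}$; as $\sum_{n\ge A}n^{4}\cdot C'Ln^{d+1}e^{-n^{\varepsilon_1}/d}=\kO(L)$, this yields $\E(\max_{\gamma\in\kP_L}\sum_{y\in\gamma}W(y))=\kO(L)$, which is (i).

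For part (ii) a crude tail bound is enough, bypassing the percolation estimate. Put $Z=\max_{\gamma\in\kP_L}\sum_{i=1}^{\ell-1}W(y_i)$; since every $\gamma=(y_i)_{i=1}^{\ell}\in\kP_L$ has $\ell-1\le|\gamma|_1\le L$ and lies in ${\rm B}(L)$, the event $\{Z\ge s\}$ forces $W(y)\ge s/L$ — that is, ${\rm T}_2(u,y)\ge(s/L)^{1/4}$ — for some $y\in{\rm B}(L)$ and $u$ with $|u-y|_1\le2$. A union bound over ${\rm B}(L)$ and the tail of ${\rm T}_2$ gives $\pp(Z\ge s)\le(10L+5)^{d}e^{-(s/L)^{\varepsilon_1/4}}$ whenever $s\ge L(2C_1)^4$. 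Using $\E(Z^2)=\int_0^\infty 2s\,\pp(Z\ge s)\,ds$ and splitting at $s_*=L^{2}$ (legitimate once $L^{2}\ge L(2C_1)^{4}$; for the finitely many smaller $L$ one has $\E(Z^{2})=\kO(1)$, since then $Z$ is a sum of $\kO(1)$ weights with finite second moment), the part $s\le s_*$ contributes at most $s_*^{2}=L^{4}$ and the part $s>s_*$ at most $2L^{2}(10L+5)^{d}\int_{L}^\infty t\,e^{-t^{\varepsilon_1/4}}\,dt=o(1)$. Hence $\E(Z^2)=\kO(L^{4})\le CL^{10}$, proving (ii) (with room to spare).

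The genuinely non--routine part is the first paragraph: because ${\rm T}_2$ is built from a supremum over all frogs that may be removed, it is not a priori clear that it has finite, $\kO(M)$--range dependence or a stretched--exponential tail, and one has to argue — as indicated above — that conditionally on ${\rm T}(u,v)$ being of order $M$ the troublesome supremum collapses to a maximum over ${\rm B}(u,M)$ and ${\rm T}_2$ then behaves no worse than ${\rm T}$. Everything afterwards is the same bookkeeping as in the proof of Lemma~\ref{emx}: the fourth power only inserts a harmless polynomial factor $n^{4}$ into an already summable series, and the inner maximum over $u$ is absorbed by an $\kO(1)$ union bound.
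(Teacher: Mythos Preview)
Your proof is correct and follows essentially the same approach as the paper: for (i) both decompose the site weight into level sets and apply Lemma~\ref{wdsp} to the resulting locally dependent percolation (the paper indexes levels by $M=\max_u {\rm T}_2(u,y)^4$ rather than by $n={\rm T}_2(u,y)$, a cosmetic difference), and for (ii) both reduce to a crude union bound on the tail of ${\rm T}_2$ over a box of size $\kO(L)$. Your (ii) is in fact slightly sharper --- by retaining the constraint $|u-y|_1\le 2$ you obtain $\E(Z^2)=\kO(L^4)$, whereas the paper relaxes to $Z\le L\max_{u,v\in {\rm B}(L+2)}{\rm T}_2(u,v)^4$ over \emph{all} pairs, pushing the tail threshold to order $L$ and yielding only the stated $\kO(L^{10})$.
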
 
We postpone the proofs of the above two lemmas to Section \ref{sec:lems} and  complete the proof of Proposition~\ref{prop2}. Combining \eqref{el}, \eqref{s26}, \eqref{s27} and Lemmas \ref{t2l} and \ref{mtl}, we get
\begin{eqnarray*}
  \sum_{i=1}^{\infty}\sum_{j=1}^{\infty}\E_{\pi}(\Delta_{i,j}{\rm T}(x))^2 &\leq&  C \left(|x|_1 + \sum_{L\geq  C_1|x|_1} L^2  e^{-L^{\varepsilon_1}/2}\right) \\
  &=& \kO(|x|_1).
\end{eqnarray*}
Thus, we can conclude the proof of Proposition \ref{prop2} by \eqref{16}. 
\end{proof}

\subsection{Proof of  Lemmas~\ref{emx}, \ref{t2l} and \ref{mtl}} \label{sec:lems}
Before presenting the proof of these lemmas, we first show the large deviation estimates as in Lemma~\ref{l1} for ${\rm T}_1$ and ${\rm T}_2$.
\begin{lem} \label{lemttt}
The following statements hold.
\begin{itemize}
	\item [(i)] For any $u,v \in \Z^d$ and $n\geq 1$, the events $\{\rmt (u,v) \leq n \}, \{\rmt_1(u,v) \leq n \}$ and $\{\rmt_2(u,v) \leq n \}$ depend only on SRWs $\{(S^x_.) :|x-u|_1 \leq n\}$.
	\item[(ii)] There exist an integer $C_1\geq 1$  and a positive  constant $\varepsilon_1$ such that for  $k\geq C_1 |y|_1$,
	\begin{equation*} \label{epsi}
	\max \{\pp({\rm T}(0,y)\geq k), \pp({\rm T}_1(0,y)\geq k), \pp({\rm T}_2(0,y)\geq k) \} \leq e^{-k^{\e_1}}.
	\end{equation*}
\end{itemize}
\end{lem}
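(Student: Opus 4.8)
The plan is to establish the two statements more or less independently, reducing everything to the corresponding properties of the original passage time ${\rm T}$ from Lemma \ref{l1}, together with the trajectory estimate \eqref{n2} of Lemma \ref{l3}. For part (i) I would argue by unwinding the definitions. The event $\{{\rm T}(u,v)\le n\}$ means there is a path $u=x_0,\dots,x_k=v$ with $\sum_i t(x_{i-1},x_i)\le n$; on this event every jump satisfies $t(x_{i-1},x_i)\le n$, so in particular $|x_i-u|_1\le \sum_{j\le i}t(x_{j-1},x_j)\le n$ for every $i$ (since a random walk moves at most one step per unit time). Hence the whole optimal path stays inside the ball $\{|z-u|_1\le n\}$, and the value of the indicator $\I({\rm T}(u,v)\le n)$ is determined by the walks $({\rm S}^x_\cdot)$ with $|x-u|_1\le n$: one can recompute the infimum over all paths confined to that ball and get the same answer. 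For ${\rm T}_1$, recall ${\rm T}_1(u,v)=\max_{z:|z-u|_1=1}{\rm T}^{[u]}(z,v)+1$; the event $\{{\rm T}_1(u,v)\le n\}$ is the intersection over the $2d$ neighbors $z$ of $u$ of the events $\{{\rm T}^{[u]}(z,v)\le n-1\}$, each of which (by the same confinement argument applied to ${\rm T}^{[u]}$, whose optimal paths also never leave the ball of radius $n-1$ around $z$, hence radius $n$ around $u$) depends only on walks $({\rm S}^x_\cdot)$ with $|x-u|_1\le n$. For ${\rm T}_2(u,v)=\sup_z {\rm T}^{[z]}(u,v)$ the subtlety is the supremum over all $z\in\Z^d$; but I claim only finitely many $z$ matter: if $|z-u|_1>n$ then deleting the frog at $z$ cannot change whether ${\rm T}^{[z]}(u,v)\le n$, because, as above, every path achieving passage time $\le n$ stays within distance $n$ of $u$ and so never visits $z$ anyway; thus $\{{\rm T}_2(u,v)\le n\}=\bigcap_{|z-u|_1\le n}\{{\rm T}^{[z]}(u,v)\le n\}$, and each ${\rm T}^{[z]}(u,v)\le{\rm T}(u,v)$ forces the relevant walks to be those with $|x-u|_1\le n$.

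For part (ii) the point is that ${\rm T}_1$ and ${\rm T}_2$ are only a bounded amount larger than suitable ordinary passage times. For ${\rm T}_2$, note ${\rm T}^{[z]}(u,v)\ge {\rm T}(u,v)$ always, so a one-sided bound does not come for free; instead I would bound ${\rm T}^{[z]}(u,v)$ from above uniformly in $z$. The idea: go from $u$ to $v$ along an optimal ${\rm T}$-path, but whenever that path would use the frog at $z$, make a short detour. Concretely, ${\rm T}^{[z]}(u,v)\le {\rm T}^{[z]}(u,w)+{\rm T}^{[z]}(w,v)$ for any $w$, and for $w$ a neighbor of $z$ one has ${\rm T}^{[z]}(\cdot,w)$ controlled by the walk at $w$ and its neighbors not equal to $z$. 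A cleaner route: observe ${\rm T}_2(u,v)\le \sum$ of a bounded number of ${\rm T}_1$-type quantities, and prove the bound for ${\rm T}_1$ directly. For ${\rm T}_1(0,y)={\max_{z:|z|_1=1}{\rm T}^{[0]}(z,y)}+1$, each ${\rm T}^{[0]}(z,y)$ with $|z|_1=1$ satisfies ${\rm T}^{[0]}(z,y)\le {\rm T}^{[0]}(z,z')+{\rm T}^{[0]}(z',y)$ where $z'$ is a neighbor of $z$ at distance $2$ from $0$; now ${\rm T}^{[0]}(z',y)$ avoids only the single site $0$, and its optimal path, rerouted around $0$ by a local detour of bounded cost, is dominated by ${\rm T}(z',y)$ plus an independent bounded contribution. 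Iterating this, ${\rm T}_1(0,y)$ and ${\rm T}_2(0,y)$ are each stochastically dominated by $C'{\rm T}(u',v')+W$ for some $u',v'$ with $|u'-v'|_1\le |y|_1+C'$ and $W$ a random variable with stretched-exponential tails (a bounded local cost, controlled by Lemma \ref{l1} applied to a bounded region). The stretched-exponential bound $e^{-k^{\e_1}}$ for $k\ge C_1|y|_1$ then follows from Lemma \ref{l1} after absorbing constants into $C_1$ and $\e_1$, using that $|y|_1\le k/C_1$.

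\textbf{Main obstacle.} The delicate point is part (ii): showing that deleting one (or, for ${\rm T}_2$, an adversarially chosen) frog inflates the passage time by at most a quantity with stretched-exponential tails, uniformly in the deleted site. The upper bound on ${\rm T}^{[z]}$ must be genuinely uniform in $z$, and the natural "detour around $z$" costs the time for the walks near $z$ to cover a bounded distance — one must check this local cost has good tails (which it does, by Lemma \ref{l1} or Lemma \ref{l3} on a bounded box) and is essentially independent of the long-range structure. Once that local-detour estimate is in hand, the reduction to Lemma \ref{l1} is routine; likewise part (i) is essentially bookkeeping with the finite-range confinement observation.
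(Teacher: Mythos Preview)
Your treatment of part (i) is correct and matches the paper's argument essentially verbatim: the confinement observation that any path with total time $\le n$ stays in the $\ell^1$-ball of radius $n$ around $u$, plus the remark that for $\rmt_2$ the supremum over $z$ effectively runs only over $|z-u|_1\le n$.

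Part (ii) is where your plan diverges from the paper, and where it has a genuine gap. The paper does \emph{not} try to bound $\rmt^{[z]}(0,y)$ by modifying the optimal $\rmt$-path with a detour around $z$. Instead it observes that the proof of Lemma~\ref{l1} (from \cite[Lemma~4.2]{AMP} or \cite[Proposition~2.4]{K}) builds a specific fast path from $0$ to $y$ out of many frogs in a chain of boxes; that construction is insensitive to the removal of any single frog, so it yields directly $\pp(\rmt^{[z]}(0,y)\ge t)\le e^{-t^{\varepsilon}}$ uniformly in $z$. From this the $\rmt_1$ bound is just a union over the $2d$ neighbours, and the $\rmt_2$ bound follows by combining the finite-range observation from (i) (if $\rmt(y)\le k$ then $\rmt^{[z]}(0,y)=\rmt(y)$ for all $z\notin{\rm B}(k)$) with a union bound over the $(2k+1)^d$ sites $z\in{\rm B}(k)$.

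Your detour argument does not deliver this uniform bound as written. In the frog model the vertices $x_{i-1}$ and $x_{i+1}$ adjacent to $z$ along the optimal $\rmt$-path need not be geometrically close to $z$ (or to each other): a single jump $t(x_{i-1},z)$ can carry the walk a long way. So ``reroute around $z$ by a local detour of bounded cost'' is not a local operation, and the random variable you call $W$ has no a~priori reason to have stretched-exponential tails uniformly in $z$. Likewise the claim that $\rmt_2(u,v)$ is bounded by a bounded number of $\rmt_1$-type quantities is not justified: $\rmt_1$ always deletes the \emph{starting} site, whereas $\rmt_2$ deletes an adversarial site which may lie anywhere along the path, and there is no evident decomposition reducing the latter to finitely many instances of the former. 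The missing idea is exactly the one you already used in part (i): only $z\in{\rm B}(k)$ matter, so a union bound over those $z$ suffices --- once one has the uniform tail for $\rmt^{[z]}$, which comes from re-running the \emph{proof} of Lemma~\ref{l1} rather than from Lemma~\ref{l1} as a black box.
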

\begin{proof} For any $u \in \Z^d$ and $n\geq 1$, an event $\kA$ is called $\kF^u_n$ -measurable if $\kA$ depends only on the SRWs $\{(S^x_.) :|x-u|_1 \leq n\}$.	 It directly follows from definition of $\rmt$ that the event $\{\rmt(u,v) \leq n\}$ is $\kF^u_n$ -measurable. By definition of $\rmt_1$ as in \eqref{def:t1}, 
\begin{equation}
\{\rmt_1(u, v) \leq n\}=  \bigcap_{z:|z-u|\leq 1} \{\rmt(z,v) \leq n-1\}.
\end{equation}
In addition  the event  $\{\rmt(z,v) \leq n-1\}$ is $\kF^{z}_{n-1}$ -measurable and $\kF^{u_1}_{n-1}\subset \kF^{u}_{n}$ if $|z-u|_1\leq 1$, so the event $\{\rmt_1(u, v) \leq n\}$ is $\kF^{u}_{n}$ -measurable. Moreover, since $\{\rmt^{[z]}(u, v) \leq n\}$ is $\kF^{u}_{n}$-measurable for any $z\in\Z^d$, the event $\{\rmt_2(u,v) \leq n\}=\cap_z \{\rmt^{[z]}(u, v) \leq n\}$ is $\kF^{u}_{n}$ -measurable.  We now prove (ii).\\

 By repeating the arguments of the proof of  Lemma \ref{l1} (see \cite[Proposition 2.4]{K} or \cite[Lemma 4.2]{AMP}), we can show that there exist positive constants $C$ and $\varepsilon$ such that for any $y, z\in\Z^d$, and $t\geq C|y|_1$,
\begin{eqnarray} \label{s39}
\pp\left({\rm T}^{[z]}(0,y) \geq t\right) \leq e^{-t^{\varepsilon}}.
\end{eqnarray}
By the union bound, for $t\ge C_2|y|_1$ with $C_2=2C$, we have
\begin{equation} \label{eps2}
\begin{split}
\P({\rm T}_1(0,y)\geq t)&\leq \sum_{z\in\Z^d:|z|_1=1}\P({\rm T}^{[0]}(z,y)\geq t-1)\\
&\leq 2d e^{-(t-1)^{\e}} \leq e^{-t^{\e_2}}, 
\end{split}
\end{equation}
with some $\e_2>0$, where we have used \eqref{s39} for $t-1\geq 2C|y|_1-1 \geq C|z-y|_1$.\\

We observe also that if ${\rm T}(y) \leq k$ then ${\rm T}^{[z]}(0,y) ={\rm T}(y)$ for $z \not \in {\rm B}(k)$. Therefore, for $k\geq C_3
|y|_1$ with $C_3=\max\{C_1,C_2\}$, 
\begin{eqnarray} \label{eps3}
\pp\left({\rm T}_2(0,y) \geq k\right) &\leq & \pp({\rm T}(y)\geq k) + \pp({\rm T}(y)< k, {\rm T}_2(0,y) \geq k ) \notag\\ 
&\leq &\pp({\rm T}(y)\geq k) + \sum_{z \in {\rm B}(k)} \pp \left({\rm T}^{[z]}(0,y) \geq k\right)  \notag\\
&\leq & e^{-k^{\varepsilon_1}} + (2k+1)^de^{-k^{\varepsilon_2}}\leq  e^{-k^{\e_3}}, \label{s41}
\end{eqnarray}
with some $\e_3>0$. Combining \eqref{eps2} and \eqref{eps3} with Lemma \ref{l1}, we get (ii).	
\end{proof}

\subsubsection{Proof of Lemma \ref{t2l}}
We decompose
\begin{eqnarray*}
	\E \left(\sum_{y\in \gamma_{0,x}}{\rm T}(y,\bar{y})^2\right)&=&\E\left[\sum_{y\in \gamma_{0,x}}{\rm T}(y,\bar{y})^2;~{\rm T}(x)\leq C|x|_1\right]+\E\left[\sum_{y\in \gamma_{0,x}}{\rm T}(y,\bar{y})^2;~{\rm T}(x)> C|x|_1\right].
\end{eqnarray*}
By a similar argument as in Lemma~\ref{l2}, the second term can be bounded from above by
\begin{equation}
\begin{split}
&\left(\E\left[\left(\sum_{y\in \gamma_{0,x}}{\rm T}(y,\bar{y})^2\right)^2\right]\right)^{1/2}\P(|\gamma_{0,x}|_1>C_1|x|_1|)^{1/2}\\
\leq & \left(\E\left[{\rm T}(x)^4\right]\right)^{1/2}\P({\rm T}(x)>C_1|x|_1|)^{1/2}\\
\leq &C|x|^2_1e^{-|x|_1^\e/2},
\end{split}
\end{equation}
and thus for all $|x|_1$  large enough,
\begin{equation} \label{tmc1}
\E \left(\sum_{y\in \gamma_{0,x}}{\rm T}(y,\bar{y})^2\right) \leq \E\left[\sum_{y\in \gamma_{0,x}}{\rm T}(y,\bar{y})^2;~{\rm T}(x)\leq C|x|_1\right]+ 1.
\end{equation}
   For any $\gamma=(y_i)_{i=1}^{\ell}$, we define 
\begin{eqnarray*}
{\rm A}^{\gamma}_{M}&=&\{y_i \in \gamma: {\rm T}(y_i,y_{i+1}) = M\}.
\end{eqnarray*}
Then, we can express 
\begin{eqnarray} \label{32}
\sum_{i=1}^{\ell-1} {\rm T}(y_i,y_{i+1})^2 = \sum_{M \geq 1 } M^2 \#{\rm A}^{\gamma}_M.
\end{eqnarray}
 By definition of $\rmax$,
\begin{eqnarray} \label{amgi}
\#\rmax \I(\rmt (x)\leq C_1|x|_1) &\leq& \I(\gamma_{0,x} \in \kP_{C_1|x|_1}) \sum_{y \in \gamma_{0,x}} \I(T(y,\bar{y})=M)  \notag \\
&=&\I(\gamma_{0,x} \in \kP_{C_1|x|_1}) \sum_{y \in \gamma_{0,x}} \I(T(y,\bar{y})=t(y,\bar{y})=M) \notag \\
& \leq & \I(\gamma_{0,x} \in \kP_{C_1|x|_1}) \sum_{y \in \gamma_{0,x}} I_y,
\end{eqnarray}
where 
\begin{equation}
I_y = \{\exists \, z \in \Z^d: |z-y|_1 \leq M, {\rm T}(y,z)=t(y,z)=M\}.
\end{equation}
By Lemma \ref{lemttt} (i), $\{I_y, y \in \Z^d\}$ is a   collection of $M$-dependent Bernoulli random variables, and thus the condition (E1) in Lemma \ref{wdsp} holds. In addition, it follows from the union bound and Lemma  \ref{l3} that 
\begin{equation} \label{qm}
  \begin{split}
    q_M&= \sup_{y\in \Z^d} \pp(\exists \,z \in \Z^d: |z-y|_1 \leq M, T(y,z)=t(y,z)=M)\\
    &\leq (2M+1)^{d} e^{-M^{\varepsilon}},
    \end{split}
\end{equation}
with $\varepsilon>0$ as in Lemma \ref{l3}. Therefore, the condition (E2) that $q_M \leq (3M+1)^{-d}$ follows if $\exp (M^{\varepsilon}) \geq ((2M+1)(3M+1))^d$, which holds for all $M\geq M_0$, with  $M_0=M_0(d,\varepsilon)$ a large constant.  Now using  \eqref{amgi} and Lemma \ref{wdsp}, we obtain that for $M\geq M_0$,
\begin{eqnarray} \label{am0x}
\E(\#\rmax \I(\rmt (x)\leq C_1|x|_1)) \leq \E \left(\max_{\gamma \in \kP_{C_1|x|_1}} \sum_{y \in \gamma} I_y \right) &\leq& C |x|_1 M^{d+1} q_M^{1/d}  \notag \\
&\leq& C'|x|_1M^{d+2}e^{-M^{\varepsilon}/d}.
\end{eqnarray}
For $M\leq M_0$, it is obvious that
\begin{equation}
\#\rmax \I(\rmt (x)\leq C_1|x|_1) \leq \frac{C_1|x|_1}{M}.
\end{equation}
Combining the last two estimates with \eqref{32}, we arrive at
\begin{eqnarray}
\E\left[\sum_{y\in \gamma_{0,x}}{\rm T}(y,\bar{y})^2;~{\rm T}(x)\leq C|x|_1\right]&=& \E\left[ \sum_{M \geq 1 } M^2 \#{\rm A}^{\gamma_{0,x}}_M;~{\rm T}(x)\leq C|x|_1\right] \notag \\
& \leq & C |x|_1 \left[ \sum_{M=1}^{M_0-1} M + \sum_{M\geq M_0} M^{d+4} \exp(-M^{\varepsilon}/d) \right] = \kO(|x|_1).
\end{eqnarray}
Combining this estimate with \eqref{tmc1}, we get the desired result.

\subsubsection{Proof of Lemma \ref{emx}} We begin with part (ii), which is easier than (i). Observe that 
\begin{eqnarray*}
\max_{\gamma =(y_i)_{i=1}^{\ell} \in \kP_L}  \sum \limits_{i=1}^{\ell-1}  {\rm T}_1(y_i,y_{i+1}) \leq L \max_{u,v \in {\rm B}(L)} {\rm T}_1(u,v)
\end{eqnarray*}
Using the union bound and Lemma \ref{lemttt} (ii),  for any $k\geq 4dC_1 L$,
\begin{eqnarray*}
\pp \left(\max_{u,v \in {\rm B}(L)} {\rm T}_1(u,v) \geq k\right) \leq (2L+1)^{2d} e^{-k^{\varepsilon_1}}.
\end{eqnarray*}
The last two inequalities yield that
\begin{eqnarray*}
  \E \left(\max_{\gamma =(y_i)_{i=1}^{\ell} \in \kP_L}  \sum \limits_{i=1}^{\ell-1}  {\rm T}_1(y_i,y_{i+1}) \right)^2 &\leq& CL^4 \left(1+ (2L+1)^{2d} \sum_{k\geq 4dC_1L} k^2 e^{-k^{\varepsilon_1}}\right)\\
  &=&\kO(L^4).
\end{eqnarray*}
 We now prove (i). For any $\gamma=(y_i)_{i=1}^{\ell} \in \kP_L$, we define
\begin{eqnarray*}
	\bar{A}^{\gamma}_M&=&\{y_i \in \gamma: |y_i-y_{i+1}|_1=M\},\\
	\bar{A}^{\gamma}_{M,0}&=&\{y_i \in \bar{A}^{\gamma}_M: {\rm T}_1(y_i,y_{i+1}) \leq C_1 M\},\\
	\bar{A}^{\gamma}_{M,k}&=&\{y_i \in \bar{A}^{\gamma}_M: {\rm T}_1(y_i,y_{i+1}) = C_1M +k\},
\end{eqnarray*}
with $C_1$ as in Lemma \ref{lemttt} (ii).  
Then
\begin{eqnarray}
\#\bar{A}^{\gamma}_M=\sum_{k\geq 0} \#\bar{A}^{\gamma}_{M,k}, \quad \sum_{M \geq 1 } M \#\bar{A}^{\gamma}_{M} = |\gamma|_1 \leq L.
\end{eqnarray}
Therefore,
\begin{eqnarray} \label{t33}
\sum_{i=1}^{\ell-1} {\rm T}_1(y_i,y_{i+1})& \leq& \sum_{M \geq 1 } \left(  C_1M\#\bar{A}^{\gamma}_{M,0} + \sum_{k \geq 1 }  (C_1M+k) \#\bar{A}^{\gamma}_{M,k}  \right) \notag\\
& \leq & C_1L+ \sum_{M \geq 1 } \sum_{k \geq 1 }  k \#\bar{A}^{\gamma}_{M,k}.
\end{eqnarray}
We shall apply  the same arguments as in the proof of Lemma \ref{t2l} to deal with the sum above. Similarly to \eqref{amgi},
\begin{equation}
\# \bar{A}^{\gamma}_{M,k} \leq \sum_{y \in \gamma} \bar{I}_y, 
\end{equation}
where 
\begin{equation}
 \bar{I}_y=\I\left(\exists z \in \Z^d: |z-y|_1 \leq C_1M+k,  \rmt_1(y,z)=C_1M+k\right).
\end{equation}
By Lemma \ref{lemttt} (i), $\{\bar{I}_y, y\in \Z^d \}$ is a collection of $(C_1M+k)$-dependent Bernoulli random variables. Hence, using the same arguments for \eqref{am0x}, we can prove that for $
C_1M+k\geq M_0$, with $M_0=M_0(d)$ some large constant,
\begin{eqnarray} \label{t34}
\E \left(\max_{\gamma \in \kP_L} \#\bar{A}^{\gamma}_{M,k}\right) \leq CL(C_1M+k)^{d+1} q_{M,k}^{1/d}, 
\end{eqnarray}
where 
\begin{eqnarray*}
q_{M,k}&=&\sup_{y\in \Z^d}\pp\left(\exists z\in \Z^d:  |y-z|_1 \leq C_1M+k, {\rm T}_1(u,v)=C_1M+k \right) \\
&\leq& (2(C_1M+k)+1)^{d}e^{-(C_1M+k)^{\varepsilon_1}},
\end{eqnarray*}
by using the union bound and Lemma \ref{lemttt} (ii). It is obvious that   $\#\bar{A}^{\gamma}_{M,k} \leq |\gamma|_1/(C_1M+k)$ for all $M, k$. Hence,
\begin{equation}
\sum_{M,k: C_1M +k \leq M_0} k\#\bar{A}^{\gamma}_{M,k} \leq M_0^2 |\gamma|_1.
\end{equation}
 Combining \eqref{t33} and \eqref{t34}, we have
\begin{eqnarray*}
  \E \left( \max_{\gamma =(y_i)_{i=1}^{\ell} \in \kP_L}  \sum \limits_{i=1}^{\ell-1} {\rm T}_1(y_i,y_{i+1})\right) &\leq& CL  \left(1+ \sum_{M, k: C_1M+k \geq M_0} (C_1M+k)^{d+3} e^{-(C_1M+k)^{\varepsilon_1}/d}\right)\\
  &=&  \kO(L),
\end{eqnarray*}
for some $C=C(d,M_0)$, which proves (i).

\subsubsection{Proof of Lemma \ref{mtl}}
To show (ii), we notice that 
\begin{equation} \label{td4}
\max_{\gamma =(y_i)_{i=1}^{\ell} \in \kP_L}  \sum \limits_{i=1}^{\ell-1}  \max_{u:|u-y_i|_1 \leq 2}{\rm T}_2(u,y_i)^4 \leq L \max_{u,v \in {\rm B}(L+2)} {\rm T}_2(u,v)^4.
\end{equation}
Now part (ii) follows from   \eqref{epsi} and \eqref{td4} by using   the same arguments as in Lemma \ref{emx} (ii). \\

  The proof of (i) is similar to that of Lemma \ref{t2l}.  As in Lemma \ref{t2l},  we define for $\gamma \in \kP_L$, and $M \geq 1$,
   \begin{eqnarray*}
	A'^{\gamma}_{M}&=& \#\{y \in \gamma: \max_{u:|u-y|_1 \leq 2} {\rm T}_2(u,y)^4 = M\} = \sum_{y \in \gamma}I'_y, 
\end{eqnarray*}
where 
\begin{equation}
I'_y= \I\left( \max_{u:|u-y|_1 \leq 2} {\rm T}_2(u,y)^4 = M\right).
\end{equation}
By Lemma \ref{lemttt} (i), for $M\geq 16$, $\{I'_y, y\in \Z^d \}$ is a collection of $M$-dependent Bernoulli random variables. By Lemma \ref{lemttt} (ii)  and the union bound,
\begin{eqnarray}
q'_M=\sup_{y\in \Z^d}\pp \left( \max_{u:|u-y|_1 \leq 2} {\rm T}_2(u,y)^4 = M \right) &\leq&  e^{-M^{\varepsilon}},
\end{eqnarray}
for some $\varepsilon>0$ small. Repeating the  arguments as in the proof of Lemma \ref{t2l} with  $A'^{\gamma}_{M}, q'_M$ instead of ${\rm A}^{\gamma}_{M}, q_M$, we can show that 
\begin{eqnarray*}
 \E \left( \max_{\gamma =(y_i)_{i=1}^{\ell} \in \kP_L }  \sum \limits_{i=1}^{\ell}  \max_{u:|u-y_i|_1 \leq 2}{\rm T}_2(u,y_i)^4\right) =  
 \kO(L) \left(M_0^2+\sum_{M\geq M_0} M^{d+2} e^{-M^{\varepsilon}/d}\right) =\kO(L),
\end{eqnarray*}
with  $M_0=M_0(d)$ a large constant, which proves (i).

\section{Proof of Proposition \ref{loop} }
\begin{proof}
The upper bound on the length of optimal paths is a consequence of Lemma \ref{l1}. Indeed,	 if $\gamma \in \O(x)$, then $l(\gamma) \leq {\rm T}(x)$. Hence,	by Lemma \ref{l1},
	\begin{eqnarray}
	  \pp \left( \max_{\gamma \in \O(x)}  l(\gamma) > C_1|x|_1  \right) &\leq& \P( {\rm T}(x)> C_1|x|_1)\\
   &\leq&       e^{-|x|_1^{\varepsilon_1}},
	\end{eqnarray}
	with $\varepsilon_1$ and $C_1$  positive constants as in Lemma \ref{l1}. We start the proof of  the lower bound by recalling a definition in the proof of Lemma \ref{t2l}. Given a path $\gamma=(y_i)^{\ell}_{i=0}$,  define 
	$${\rm A}^{\gamma}_{M}=\{0\leq i\leq \ell-1 : {\rm T}(y_i,y_{i+1}) =t(y_i,y_{i+1})= M\}.$$
	Note that $l(\gamma) \geq \sum_{M \geq 1 } \# {\rm A}_M^{\gamma}$ for any $\gamma$. Thus, for any $\gamma \in \O(x)$ and $K\geq 1$
	\begin{eqnarray}
	|x|_1 \leq {\rm T}(x) = \sum_{M \geq  1} M \#{\rm A}^{\gamma}_{M} &\leq& K \sum_{M=1}^K \#{\rm A}^{\gamma}_{M} +\sum_{M\geq K } M \#{\rm A}^{\gamma}_{M} \notag\\
	& \leq & K l(\gamma) + \sum_{M\geq K } M \#{\rm A}^{\gamma}_{M}.
	\end{eqnarray}
	Rearranging it, we obtain that for any $K\geq 1$,
	\begin{eqnarray} \label{t38}
	  \min_{\gamma \in \O(x)} l(\gamma) &\geq& \frac{1}{K} \left(|x|_1- \max_{\gamma  \in \O(x)} \sum_{M\geq  K} M \#{\rm A}^{\gamma}_{M} \right)\notag\\
          &\geq &\frac{1}{K} \left(|x|_1- \sum_{M\geq  K} M \max_{\gamma  \in \O(x)} \#{\rm A}^{\gamma}_{M} \right).
	\end{eqnarray}
        Note that if ${\rm T}(x)\leq C_1|x|_1$, then $\gamma\in \kP_{C_1|x|_1}$ for any $\gamma\in\O(x)$, and thus 
       \begin{equation} \label{ae6}
        \sum_{M\geq  K} M \max_{\gamma  \in \O(x)} \#{\rm A}^{\gamma}_{M}\leq \sum_{M\geq  K} M \max_{\gamma  \in \kP_{C_1|x|_1}} \#{\rm A}^{\gamma}_{M}.
        \end{equation}
	We  define 
	$$M_x=[|x|_1^{1/2(d+3)}],$$
	and 
	$$\kE = \{  \forall \, M\geq M_x, \, \forall \gamma  \in \kP_{C_1|x|_1},~\#{\rm A}^{\gamma}_{M} =0 \}.$$
	Then, by using the union bound and Lemma \ref{l3}, we get
	\begin{eqnarray} \label{ae7}
 \pp(\kE^c)&\leq & \pp (\exists u,v\in {\rm B}(C_1|x|_1)\text{ such that }{\rm T}(u,v)=t(u,v)\geq M_x) \notag\\
		&\leq&   (2C_1|x|_1+1)^{2d} \sum_{M \geq M_x} e^{-M^{\varepsilon_1}}\leq Ce^{-|x|_1^{\e}},  
	\end{eqnarray}
for some positive constants $C$ and $\e$.
  We recall from  the proof of Lemma \ref{t2l} that 
  \begin{equation}
\#\rma_M^{\gamma} \leq \sum_{y\in \gamma} I_y,
\end{equation}
where $\{I_y, y\in \Z^d\}$ is a collection of $M$-dependent Bernoulli random variables 
$$I_y= \I(\exists \, z \in \Z^d: |z-y|_1 \leq M, {\rm T}(y,z)=t(y,z)=M),$$
and 	 
\begin{equation}
q_M=\sup_{y\in\Z^d} \E(I_y) \leq (2M+1)^{d} e^{-M^{\varepsilon}}.
\end{equation} 
Then, the conditions (E1) and (E2) of Lemma \ref{wdsp} are satisfied. Using Lemma \ref{wdsp} (i), we obtain that  
\begin{eqnarray} \label{t41}
\pp \left(\max_{\gamma \in \kP_{C_1|x|_1}} \#{\rm A}^{\gamma}_{M} \geq |x|_1M^{-3}\right) \leq 2^d \exp \left(-|x|_1/((16M)^{d+3})\right),
\end{eqnarray}
provided that $|x|_1M^{-3} \geq C M^{d} \max \{1, |x|_1 Mq_M^{1/d} \}$, which holds for $|x|_1\geq 2CM^{d+5}$ and $M\geq K$ with $K$ a large constant.  
By \eqref{t41}  and the fact that $M_x =[ |x|_1^{1/2(d+3)}] =o( |x|_1^{1/(d+5)})$, 
	\begin{eqnarray*}
	  \pp \left(\sum_{M=K}^{M_x}M \max_{\gamma \in \kP_{C_1|x|_1}}\#{\rm A}^{\gamma}_{M} \geq |x|_1\sum_{M=K}^{M_x}M^{-2}\right) &\leq&  \sum_{M=K}^{M_x} \pp \left(M \max_{\gamma \in \kP_{C_1|x|_1}}\#{\rm A}^{\gamma}_{M} \geq |x|_1 M^{-2}\right)\\
          &\leq & 2^d \sum_{M=K}^{M_x}\exp \left(-\tfrac{|x|_1}{(16M)^{d+3}}\right).
	\end{eqnarray*}
	Therefore,
		\begin{eqnarray} \label{t45}
	\pp \left(\sum_{M=K}^{M_x}M \max_{\gamma \in \kP_{C_1|x|_1}}\#{\rm A}^{\gamma}_{M} >\frac{ |x|_1}{2}\right)\leq e^{-|x|_1^{\varepsilon}},
	\end{eqnarray}
for some $\varepsilon >0$. 	 Combining \eqref{t38}, \eqref{ae6}, \eqref{ae7} and \eqref{t45} yields that 
\begin{equation*}
  \begin{split}
    \pp \left(\min_{\gamma \in \O(x)} l(\gamma)  < \frac{|x|_1}{2K} \right)& \leq \P({\rm T}(x)>C_1|x|_1)+\P(\kE^c)+\P\left(\sum_{M=K}^{M_x}M \max_{\gamma \in \kP_{C_1|x|_1}}\#{\rm A}^{\gamma}_{M}> \frac{|x|_1}{2}\right)\\
    & \leq Ce^{-|x|_1^{\e}},
  \end{split}
 \end{equation*}
which completes the proof of Proposition \ref{loop}.
\end{proof}

\begin{ack} \emph{We would like to thank the anonymous referees for  carefully reading the manuscript and many valuable comments. The work of V. H. Can is supported by  the fellowship of the Japan Society for the Promotion of Science and the Grant-in-Aid for JSPS fellows Number 17F17319, and by the  Vietnam National Foundation for Science and Technology Development (NAFOSTED) under grant 101.03-2017.07.}
\end{ack}

\end{document}